\documentclass[reqno]{amsart}
\usepackage{amsmath, amsthm, amssymb, amstext}

\usepackage[left=2.9cm,right=2.9cm,top=3cm,bottom=3cm]{geometry}
\usepackage{hyperref,xcolor}
\hypersetup{pdfborder={0 0 0},colorlinks}
\usepackage{enumitem}
\setlength{\parindent}{1.2em}
\allowdisplaybreaks
\raggedbottom
\usepackage{todonotes}

\newtheorem{theorem}{Theorem}
\newtheorem{remark}[theorem]{Remark}
\newtheorem{lemma}[theorem]{Lemma}
\newtheorem{proposition}[theorem]{Proposition}

\newtheorem{definition}[theorem]{Definition}


\let\div\undefined\DeclareMathOperator{\div}{div}
\newcommand{\Lp}[1]{L^{#1}(\Omega)}
\newcommand{\Wpzero}[1]{W^{1,#1}_0(\Omega)}
\newcommand*\diff{\mathrm{d}}
\newcommand\V{W^{1,\mathcal{H}}_0(\Omega)}
\newcommand{\N}{\mathbb{N}}
\newcommand{\R}{\mathbb{R}}

\numberwithin{theorem}{section}
\numberwithin{equation}{section}

\title[On double phase Kirchhoff problems with singular nonlinearity]
{On double phase Kirchhoff problems with singular nonlinearity}

\author[R.\,Arora]{Rakesh Arora}
\address[R.\,Arora]{Department of Mathematics and Statistics, Masaryk University, Building 08, Kotl\'{a}\v{r}sk\'{a} 2, Brno 611 37, Czech Republic}
\email{arora@math.muni.cz, arora.npde@gmail.com}

\author[A.\,Fiscella]{Alessio Fiscella}
\address[A.\,Fiscella]{Dipartimento di Matematica e Applicazioni, Universit\`a degli Studi di Milano-Bicocca, Via Cozzi 55, Milano, CAP 20125, Italy}
\email{alessio.fiscella@unimib.it}

\author[T.\,Mukherjee]{Tuhina Mukherjee}
\address[T.\,Mukherjee]{Department of Mathematics, Indian Institute of Technology Jodhpur, Rajasthan-506004, India-342037}
\email{tuhina@iitj.ac.in}

\author[P.\,Winkert]{Patrick Winkert}
\address[P.\,Winkert]{Technische Universit\"{a}t Berlin, Institut f\"{u}r Mathematik, Stra\ss e des 17.\,Juni 136, 10623 Berlin, Germany}
\email{winkert@math.tu-berlin.de}

\begin{document}

\begin{abstract}
	In this paper, we study multiplicity results for double phase problems of Kirchhoff type with right-hand sides that include a parametric singular term and a nonlinear term of subcritical growth. Under very general assumptions on the data, we prove the existence of at least two weak solutions that have different energy sign. Our treatment is based on the fibering method in form of the Nehari manifold. We point out that we cover both the non-degenerate as well as the degenerate Kirchhoff case in our setting.
\end{abstract}

\subjclass{35A15, 35J15, 35J60, 35J62, 35J75}
\keywords{Double phase operator, fibering method, Kirchhoff term, multiple solutions, Nehari manifold, singular problems}

\maketitle

\section{Introduction}

In this work, we are concerned with multiple solutions for double phase problems with a nonlocal Kirchhoff term and a singular right-hand side. To be more precise, we study the problem
\begin{equation}\label{problem}\tag{$P_\lambda$}
	\begin{aligned}
		-m \left[\int_\Omega \left( \frac{|\nabla u|^p}{p} + a(x) \frac{|\nabla u|^q}{q}\right)\,\diff x\right]\mathcal{L}_{p,q}^{a}(u) &= \lambda u^{-\gamma} +u^{r-1} \quad&& \text{in } \Omega,\\
		u  &> 0 \quad && \text{in } \Omega,\\
		u  &= 0 &&\text{on } \partial\Omega,
	\end{aligned}
\end{equation}
with $\Omega \subset \mathbb{R}^N$ ($N\geq 2$) being a bounded domain with Lipschitz boundary $\partial\Omega$, $\lambda>0$ is the parameter to be specified and $\mathcal{L}_{p,q}^{a}$ denotes the double phase operator given by
\begin{align}\label{operator_double_phase}
	\mathcal{L}_{p,q}^{a}(u):= \div \left(|\nabla u|^{p-2}\nabla u + a(x) |\nabla u|^{q-2}\nabla u \right), \quad u\in\V.
\end{align}
Furthermore, we suppose the following conditions:
\begin{enumerate}
	\item[\textnormal{(H)}]
		\begin{enumerate}
			\item[\textnormal{(i)}]
				$1<p<N$, $p<q<p^*$ and $0 \leq a(\cdot)\in L^\infty(\Omega)$ with $p^*$ being the critical Sobolev exponent to $p$ given by
				\begin{align}\label{critical_exponent}
					p^*=\frac{Np}{N-p};
				\end{align}
			\item[\textnormal{(ii)}]
				$0<\gamma<1$ and $m\colon  [0,\infty) \to [0,\infty)$ is a continuous function defined by
				\begin{align}\label{kirchhoff}
					m(t) = a_0 + b_0 t^{\theta-1} \quad \text{for all } t\geq 0,
				\end{align} 
				where $a_0\geq 0$, $b_0>0$ with $ \theta \in \left[1,\frac{r}{q}\right)$ and $r\in (q\theta,p^*)$.
		\end{enumerate}
\end{enumerate}

Problems of type \eqref{problem} combine several interesting phenomena into one problem. First, the differential operator involved is the so-called double phase operator given in \eqref{operator_double_phase}. In 1986, Zhikov \cite{Zhikov-1986} introduced for the first time in literature the related energy functional to \eqref{operator_double_phase} defined by
\begin{align}\label{integral_minimizer}
	\omega \mapsto \int_\Omega \big(|\nabla  \omega|^p+a(x)|\nabla  \omega|^q\big)\,\diff x.
\end{align}
This kind of functional has been used to describe models for strongly anisotropic materials in the context of homogenization and elasticity.
Indeed, the hardening properties of strongly anisotropic materials change point by point. For this, the modulating coefficient $a(\cdot)$ helps to regulate the mixture of two different materials, with hardening powers $p$ and $q$. 
From the mathematical point of view, the behavior of \eqref{integral_minimizer} is related to the sets on which the weight function $a(\cdot)$ vanishes or not. Hence, there are two phases $a(x)=0$ and $a(x)\neq 0$ and so \eqref{integral_minimizer} is said to be of double phase type. In this direction, functional \eqref{integral_minimizer} has several mathematical applications in the study of duality theory and of the Lavrentiev
gap phenomenon, see Zhikov \cite{Zhikov-1995,Zhikov-2011}. Also, \eqref{integral_minimizer} belongs to the class of the integral functionals
with nonstandard growth condition, according to Marcellini’s terminology \cite{Marcellini-1991,Marcellini-1989b}.
Following this line of research, Mingione et al.\,provide famous results in the regularity theory of local minimizers of \eqref{integral_minimizer}, see, for example, the works in Baroni-Colombo-Mingione \cite{Baroni-Colombo-Mingione-2015, Baroni-Colombo-Mingione-2018} and Colombo-Mingione \cite{Colombo-Mingione-2015a, Colombo-Mingione-2015b}.

A second interesting phenomenon is the appearance of a nonlocal Kirchhoff term given in \eqref{kirchhoff} which was first introduced by Kirchhoff \cite{Kirchhoff-1876}. Problems as in \eqref{problem} involving a Kirchhoff term are said to be degenerate if $a_0=0$ and nondegenerate if $a_0>0$. It is worth noting that the degenerate case is rather interesting and is treated in well-known papers in the Kirchhoff theory. We do cover the degenerate case in our paper which has several applications in physics. For example, the transverse oscillations of a stretched string with nonlocal flexural rigidity depends continuously on the Sobolev deflection norm of $u$ via $m(\int_\Omega |\nabla u|^2\,\diff x)$, that is, $m(0)=0$ is nothing less than the base tension of the string is zero. In the extensive literature on degenerate and nondegenerate Kirchhoff problems, we refer, for example, to the works of Arora-Giacomoni-Mukherjee-Sreenadh \cite{Arora-Giacomoni-Mukherjee-Sreenadh-2019,Arora-Giacomoni-Mukherjee-Sreenadh-2020}, Autuori-Pucci-Salvatori \cite{Autuori-Pucci-Salvatori-2010}, D'Ancona-Spagnolo \cite{DAncona-Spagnolo-1992}, Figueiredo \cite{Figueiredo-2013}, Fiscella \cite{Fiscella-2019}, Fiscella-Valdinoci \cite{Fiscella-Valdinoci-2014}, Mingqi-R\u{a}dulescu-Zhang \cite{Mingqi-Radulescu-Zhang-2019}, Pucci-Xiang-Zhang \cite{Pucci-Xiang-Zhang-2015}, Xiang-Zhang-R\u{a}dulescu \cite{Xiang-Zhang-Radulescu-2016} and the references therein.

A third fascinating aspect of our problem is the presence of a nonlinear singular term in \eqref{problem}. The study of elliptic or integral equations involving singular terms started in the early sixties by the work of Fulks-Maybee \cite{Fulks-Maybee-1960}, originating from the models of heat conduction in electrically conducting materials. More precisely, let $\Omega$ be an electrically conducting medium in $\mathbb{R}^3$ and $u$ be the steady state temperature distribution in the region $\Omega$. Then, if $\frac{\lambda}{u^{\gamma}}$ is the rate of generation of heat with constant voltage $\lambda$ (as in our model $(P_\lambda)$), then the temperature distribution in the conducting medium satisfies the local and linear counterpart of the equation mentioned in \eqref{problem}. For interested readers, we refer to works of Diaz-Morel-Oswald \cite{Diaz-Morel-Oswald-1987}, Nachman-Callegari \cite{Nachman.al_1986}, and Stuart \cite{Stuart_1974} for applications in non-newtonian fluid flows in porous media and heterogeneous catalysts, pseudo-plastic fluids, and in the theory of radiative transfer in semi-infinite atmospheres respectively.

Denoting
\begin{align*}
	\phi_{\mathcal{H}}(u)= \int_\Omega \left(\frac{|u|^p}{p}+a(x) \frac{|u|^q}{q}\right)\,\diff x,
\end{align*}
and indicating with $\V$ the homogeneous Musielak-Orlicz Sobolev space which will be introduced in Section \ref{sec_2},
we can state the following definition of a weak solution to problem \eqref{problem}.

\begin{definition}
	A function $u \in \V$ is said to be a weak solution of problem \eqref{problem} if $u^{-\gamma}\varphi\in L^1(\Omega)$, $u>0$ a.e.\,in $\Omega$ and
	\begin{align*}
		m(\phi_\mathcal{H}(\nabla u)) \left\langle \mathcal{L}_{p,q}^{a}(u), \varphi\right\rangle= \lambda \int_{\Omega} u^{-\gamma}\varphi\,\diff x +\int_\Omega u^{r-1} \varphi\,\diff x
	\end{align*}
	is satisfied for all $\varphi \in \V$, where $\langle\cdot,\cdot\rangle$ denotes the duality pairing between $\V$ and its dual space $\V^*$.
\end{definition}

Based on hypotheses \textnormal{(H)} and Proposition \ref{proposition_modular_properties} in Section \ref{sec_2}, it is clear that the definition of a weak solution is well-defined. Moreover, we introduce the corresponding energy functional $J_\lambda\colon \V \to \R$ associated to problem \eqref{problem} defined by
\begin{align*}
	J_\lambda (u) = M[\phi_{\mathcal{H}}(\nabla u)] -\frac{\lambda}{1-\gamma}\int_\Omega |u|^{1-\gamma}\,\diff x - \frac{1}{r}\int_\Omega |u|^r\,\diff x,
\end{align*}
where $M\colon  [0,\infty) \to [0,\infty)$ is given by
\begin{align*}
	M(t)= \int_0^tm(\tau)\,\diff \tau= a_0 {t} +\frac{b_0}{\theta}t^\theta.
\end{align*}

The main result in this paper reads as follows.

\begin{theorem}\label{main_result}
	Let hypotheses \textnormal{(H)}  be satisfied. Then there exists $\lambda^*>0$ such that for all $\lambda \in (0,\lambda^*]$ problem \eqref{problem} has at least two weak solutions $u_\lambda$, $v_\lambda \in \V$ such that $J_\lambda(u_\lambda)<0<J_\lambda(v_\lambda)$.
\end{theorem}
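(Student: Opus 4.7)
The plan is to apply the Nehari manifold (fibering) method, which is the natural tool here since the singular term $|u|^{1-\gamma}$ destroys the $C^1$-regularity of $J_\lambda$ required by direct variational approaches. For each $u\in\V\setminus\{0\}$ I would study the fiber $\psi_u(t):=J_\lambda(tu)$ for $t>0$, whose derivative reads
\begin{align*}
\psi_u'(t)={}&m\bigl(\phi_\mathcal{H}(t\nabla u)\bigr)\Bigl[t^{p-1}\|\nabla u\|_p^p+t^{q-1}\int_\Omega a|\nabla u|^q\,\diff x\Bigr]\\
&-\lambda t^{-\gamma}\int_\Omega|u|^{1-\gamma}\,\diff x-t^{r-1}\int_\Omega|u|^r\,\diff x.
\end{align*}
Hypotheses \textnormal{(H)}, in particular $\gamma<1$ and $q\theta<r<p^*$, force $\psi_u'(t)\to-\infty$ as $t\to 0^+$ and $\psi_u(t)\to-\infty$ as $t\to+\infty$. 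A careful analysis of the non-singular part of $\psi_u'$ should show that for every $u\neq 0$ it is positive on some interval of $(0,+\infty)$ and negative outside it. Choosing $\lambda^*$ as the largest parameter for which this positive interval still dominates $\lambda t^{-\gamma}\int_\Omega|u|^{1-\gamma}\,\diff x$ uniformly over the modular unit sphere of $\V$, one obtains that for every $\lambda\in(0,\lambda^*]$ the equation $\psi_u'(t)=0$ has exactly two roots $0<t_1(u)<t_2(u)$, corresponding to a local minimum with $\psi_u(t_1(u))<0$ and a local maximum with $\psi_u(t_2(u))>0$.

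This fiber analysis partitions the Nehari set $\mathcal{N}_\lambda=\{u\in\V\setminus\{0\}:\psi_u'(1)=0\}$ into $\mathcal{N}_\lambda^+\cup\mathcal{N}_\lambda^-$ according to the sign of $\psi_u''(1)$, with $\mathcal{N}_\lambda^0=\emptyset$ for such $\lambda$. I would then minimize $J_\lambda$ separately on each component. Coercivity of $J_\lambda$ on $\mathcal{N}_\lambda^-$ follows by using the Nehari identity to eliminate the $L^r$-term in $J_\lambda$ and invoking Proposition \ref{proposition_modular_properties} together with $q\theta<r$. Compactness of minimizing sequences comes from reflexivity of $\V$ and the compact embedding $\V\hookrightarrow L^s(\Omega)$ for $s\in[1,p^*)$, which absorbs both the $L^{1-\gamma}$ and the $L^r$ integrals; Fatou's lemma handles the singular term while weak lower semicontinuity treats the modular-Kirchhoff part. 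To ensure that the weak limits $u_\lambda\in\mathcal{N}_\lambda^+$ and $v_\lambda\in\mathcal{N}_\lambda^-$ (and do not slip into $\mathcal{N}_\lambda^0$) I would rely on the continuity of $u\mapsto t_i(u)$ to project a potential competitor back onto the appropriate component of $\mathcal{N}_\lambda$ and contradict minimality.

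The main obstacle is to translate these minimizer properties into the weak formulation of problem \eqref{problem}, since $J_\lambda$ is not differentiable in the singular direction. I plan to use the by-now-standard Sun--Wu--Long/Canino perturbation technique: for $\varphi\in\V$ with $\varphi\geq 0$ and $\varepsilon>0$ small, admissibility of $u_\lambda+\varepsilon\varphi$ combined with minimality yields a one-sided variational inequality, and the opposite inequality (hence equality for general test functions) is obtained by testing with $\varphi=(u_\lambda+\varepsilon\eta)^+-u_\lambda$ and passing to the limit via Lebesgue dominated convergence together with Fatou's lemma. Pointwise positivity $u_\lambda,v_\lambda>0$ a.e.\ in $\Omega$, required for $u^{-\gamma}$ to be meaningful, follows by comparison with the unique solution of the purely singular auxiliary problem, in the spirit of the literature on singular $(p,q)$-type equations. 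Finally, the sign structure $J_\lambda(u_\lambda)<0<J_\lambda(v_\lambda)$ is built into the fiber analysis of the first paragraph.
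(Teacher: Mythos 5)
Your overall strategy is the same as the paper's: fibering analysis of $\psi_u$, decomposition of $\mathcal{N}_\lambda$ into $\mathcal{N}_\lambda^{\pm}$ with $\mathcal{N}_\lambda^{\circ}=\emptyset$ for small $\lambda$, minimization on each piece, and recovery of the weak formulation through the one-sided variational inequality followed by testing with $(u_\lambda+\varepsilon\varphi)^+$. That skeleton is correct. However, two steps as you describe them contain genuine gaps.

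First, you claim that the sign structure $J_\lambda(v_\lambda)>0$ is ``built into the fiber analysis'' because $t_2(u)$ is a local maximum of $\psi_u$. This does not follow: $\psi_u$ starts at $0$, dips below $0$ near the origin because of the $-\lambda t^{1-\gamma}$ term, and the value at the subsequent local maximum $t_2(u)$ can perfectly well remain negative. Establishing $\Theta_\lambda^-=\inf_{\mathcal{N}_\lambda^-}J_\lambda>0$ requires a separate quantitative argument. In the paper this is Proposition \ref{pro:prelim:sec-sol}, which combines the lower bound $\|\nabla v\|_p^p>D_2$ for all $v\in\mathcal{N}_\lambda^-$ from Proposition \ref{gap-struc} (a bound independent of $\lambda$) with the estimate $D_3\|\nabla v_0\|_p^{p\theta}\leq \lambda D_4\|\nabla v_0\|_p^{1-\gamma}$ that would hold if some $v_0\in\mathcal{N}_\lambda^-$ had $J_\lambda(v_0)\leq 0$; letting $\lambda\to 0$ gives the contradiction and produces the third threshold $\Lambda_3$. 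Without something of this kind your $\lambda^*$ cannot be chosen so that the second solution has positive energy.

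Second, you propose to obtain $u_\lambda>0$ a.e.\ ``by comparison with the unique solution of the purely singular auxiliary problem.'' For the Kirchhoff operator this is problematic: the coefficient $m(\phi_{\mathcal{H}}(\nabla u))$ is nonlocal, comparison principles for such operators fail in general, and in the degenerate case $a_0=0$ (which the hypotheses allow) the auxiliary problem itself degenerates where the energy vanishes. The paper avoids comparison entirely: it first proves the ray-minimality property $J_\lambda(u_\lambda)\leq J_\lambda(u_\lambda+th)$ for small $t$ (Proposition \ref{ksdp-prop4}, which in turn rests on the implicit-function-theorem Lemma \ref{ksdp-lem3}), and then shows that if $u_\lambda$ vanished on a set of positive measure the difference quotient $t^{-1}[J_\lambda(u_\lambda+th)-J_\lambda(u_\lambda)]$ would tend to $-\infty$ because of the term $-\lambda t^{-\gamma}\int_K h^{1-\gamma}\,\diff x$, contradicting minimality. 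You should replace the comparison step by this direct argument (or justify a comparison principle valid for the nonlocal, possibly degenerate operator, which is a nontrivial task in its own right). Note also that the same ray-minimality lemma is what makes your Sun--Wu--Long perturbation step rigorous, so it is needed anyway.
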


The proof of Theorem \ref{main_result} is based on a careful study of the corresponding fibering map which was initiated by the work of Dr\'{a}bek-Pohozaev \cite{Drabek-Pohozaev-1997}. The idea is to define the related Nehari manifold of \eqref{problem} in the form of the first derivative of the fibering mapping and then we split the Nehari manifold into three disjoint parts which are related to the second derivative of the fibering function. It turns out that the global minimizers of $J_\lambda$ restricted to two of them are the solutions we seek and the third one is the empty set for small values of the parameter $\lambda>0$. This method has become a very powerful tool and has been used, for example, in the works of Chen-Kuo-Wu \cite{Chen-Kuo-Wu-2011} (for degenerate Kirchhoff Laplacian problems with sign-changing weight), Fiscella-Mishra \cite{Fiscella-Mishra-2019} (for fractional Kirchhoff problems), Kumar-R\u{a}dulescu-Sreenadh \cite{Kumar-Radulescu-Sreenadh-2020} (for critical $(p,q)$-equations), Liao-Zhang-Liu-Tang \cite{Liao-Zhang-Liu-Tang-2015} (for Kirchhoff Laplacian problems),   Mukherjee-Sreenadh \cite{Mukherjee-Sreenadh-2019} (for fractional problems), Papageorgiou-Repov\v{s}-Vetro \cite{Papageorgiou-Repovs-Vetro-2021} (for $(p,q)$-equations), Papageorgiou-Winkert \cite{Papageorgiou-Winkert-2021} (for $p$-Laplacian problems), Tang-Chen \cite{Tang-Chen-2017} (for ground state solutions of Schr\"odinger type), see also the references therein.

To the best of our knowledge, the only work dealing with a double phase operator and a nonlocal Kirchhoff term has been recently done by Fiscella-Pinamonti \cite{Fiscella-Pinamonti-2020} who studied the problem
\begin{equation}\label{problem2}
	\begin{aligned}
		-m \left[\int_\Omega \left( \frac{|\nabla u|^p}{p} + a(x) \frac{|\nabla u|^q}{q}\right)\,\diff x\right]\mathcal{L}_{p,q}^{a}(u) &=f(x,u) \quad&& \text{in } \Omega,\\
		u  &= 0 &&\text{on } \partial\Omega,
	\end{aligned}
\end{equation}
with a Carath\'eodory $f\colon\Omega\times\R\to\R$ satisfying subcritical growth and the Ambrosetti-Rabinowitz condition. Based on the mountain-pass theorem, the existence of a nontrivial weak solution of \eqref{problem2} is shown. In addition, the authors in \cite{Fiscella-Pinamonti-2020} considered the problem 
\begin{equation}\label{problem3}
	\begin{aligned}
		-m \left( \int_\Omega |\nabla u|^p\,\diff x\right)\Delta_pu-m\left(\int_\Omega a(x)|\nabla u|^q\,\diff x\right) \div\left(a(x)|\nabla u|^{q-2}\nabla u\right)
		&=f(x,u) \quad&& \text{in } \Omega,\\
		u  &= 0 &&\text{on } \partial\Omega,
	\end{aligned}
\end{equation}
and proved the existence of infinitely many weak solutions with unbounded energy by using the fountain theorem. Even if the double phase operator does not explicitly appear in \eqref{problem3}, this problem has still a variational structure set in the same double phase framework of \eqref{problem2}. 


Finally, we mention some existence and multiplicity results for double phase problems without Kirchhoff term, that is, $m(t)\equiv 1$ for all $t\geq 0$. We refer to the papers of Arora-Shmarev \cite{Arora-Shmarev-2021,Arora-Shmarev-2020} (parabolic double phase problems), Colasuonno-Squassina \cite{Colasuonno-Squassina-2016} (eigenvalue problems), Farkas-Winkert \cite{Farkas-Winkert-2021}, Farkas-Fiscella-Winkert \cite{Farkas-Fiscella-Winkert-2021} (singular Finsler double phase problems), Fiscella \cite{Fiscella-2020} (Hardy potentials), Gasi\'nski-Papa\-georgiou \cite{Gasinski-Papageorgiou-2019} (locally Lipschitz right-hand side), Gasi\'nski-Winkert \cite{Gasinski-Winkert-2020a,Gasinski-Winkert-2020b,Gasinski-Winkert-2021} (convection and superlinear problems), Liu-Dai \cite{Liu-Dai-2018} (Nehari manifold approach), Liu-Dai-Papageorgiou-Winkert \cite{Liu-Dai-Papageorgiou-Winkert-2021} (singular problems),
Perera-Squassina \cite{Perera-Squassina-2018} (Morse theoretical approach), Zeng-Bai-Gasi\'nski-Winkert \cite{Zeng-Bai-Gasinski-Winkert-2020, Zeng-Gasinski-Winkert-Bai-2020} (multivalued obstacle problems) and the references therein.

The paper is organized as follows. In Section \ref{sec_2}, we recall the main properties of Musielak-Orlicz Sobolev spaces $\V$ and state the main embeddings concerning these spaces. Section \ref{sec_3} gives a detailed analysis of the fibering map and presents the main properties of the three disjoints subsets of the Nehari manifold. In Section \ref{sec_4} we prove the existence of at least two weak solutions of problem \eqref{problem}, see Propositions \ref{existence:first} and \ref{existence:second}. Finally, in Section \ref{sec final} we study a singular Kirchhoff problem driven by the left-hand side of \eqref{problem3}, inspired by \cite{Fiscella-Pinamonti-2020}.

\section{Preliminaries}\label{sec_2}

In this section, we will recall the main properties and embedding results for Musielak-Orlicz Sobolev spaces. To this end, we suppose that $\Omega\subset \R^N$ ($N\geq 2$) is a bounded domain with Lipschitz boundary $\partial\Omega$. For any $r\in [1,\infty)$, we denote by $\Lp{r}=L^r(\Omega;\R)$ and $L^r(\Omega;\R^N)$ the usual Lebesgue spaces with the norm $\|\cdot\|_r$. Moreover, the Sobolev space $\Wpzero{r}$ is equipped with the equivalent norm $\|\nabla \cdot \|_r$ for $1<r<\infty$.

Let hypothesis \textnormal{(H)(i)} be satisfied and consider the nonlinear function $\mathcal{H}\colon \Omega \times [0,\infty)\to [0,\infty)$ defined by
\begin{align*}
	\mathcal H(x,t)= t^p+a(x)t^q.
\end{align*}
Denoting by $M(\Omega)$ the space of all measurable functions $u\colon\Omega\to\R$, we can introduce the Musielak-Orlicz Lebesgue space $L^\mathcal{H}(\Omega)$ which is given by
\begin{align*}
	L^\mathcal{H}(\Omega)=\left \{u\in M(\Omega)\,:\,\varrho_{\mathcal{H}}(u)<\infty \right \}
\end{align*}
equipped with the Luxemburg norm
\begin{align*}
	\|u\|_{\mathcal{H}} = \inf \left \{ \tau >0\,:\, \varrho_{\mathcal{H}}\left(\frac{u}{\tau}\right) \leq 1  \right \},
\end{align*}
where the modular function is given by
\begin{align*}
	\varrho_{\mathcal{H}}(u):=\int_\Omega \mathcal{H}(x,|u|)\,\diff x=\int_\Omega \big(|u|^{p}+a(x)|u|^q\big)\,\diff x.
\end{align*}

The norm $\|\,\cdot\,\|_{\mathcal{H}}$ and the modular function $\varrho_{\mathcal{H}}$ have the following relations, see Liu-Dai \cite[Proposition 2.1]{Liu-Dai-2018} or Crespo-Blanco-Gasi\'nski-Harjulehto-Winkert \cite[Proposition 2.13]{Crespo-Blanco-Gasinski-Harjulehto-Winkert-2021}.

\begin{proposition}\label{proposition_modular_properties}
	Let \textnormal{(H)(i)} be satisfied, $u\in L^{\mathcal{H}}(\Omega)$ and $c>0$. Then the following hold:
	\begin{enumerate}
		\item[\textnormal{(i)}]
			If $u\neq 0$, then $\|u\|_{\mathcal{H}}=c$ if and only if $ \varrho_{\mathcal{H}}(\frac{u}{c})=1$;
		\item[\textnormal{(ii)}]
			$\|u\|_{\mathcal{H}}<1$ (resp.\,$>1$, $=1$) if and only if $ \varrho_{\mathcal{H}}(u)<1$ (resp.\,$>1$, $=1$);
		\item[\textnormal{(iii)}]
			If $\|u\|_{\mathcal{H}}<1$, then $\|u\|_{\mathcal{H}}^q\leq \varrho_{\mathcal{H}}(u)\leq\|u\|_{\mathcal{H}}^p$;
		\item[\textnormal{(iv)}]
			If $\|u\|_{\mathcal{H}}>1$, then $\|u\|_{\mathcal{H}}^p\leq \varrho_{\mathcal{H}}(u)\leq\|u\|_{\mathcal{H}}^q$;
		\item[\textnormal{(v)}]
			$\|u\|_{\mathcal{H}}\to 0$ if and only if $ \varrho_{\mathcal{H}}(u)\to 0$;
		\item[\textnormal{(vi)}]
			$\|u\|_{\mathcal{H}}\to \infty$ if and only if $ \varrho_{\mathcal{H}}(u)\to \infty$.
	\end{enumerate}
\end{proposition}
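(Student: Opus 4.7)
The plan is to derive all six items from two elementary modular scaling inequalities and then from a unique-value characterization of the Luxemburg norm. The key preliminary observation is that, because $\mathcal{H}(x,t)=t^p+a(x)t^q$ and $q>p$, for every $u\in L^{\mathcal{H}}(\Omega)$ and $\tau>0$ one has
\begin{equation*}
\varrho_{\mathcal{H}}(\tau u)=\tau^p\int_\Omega |u|^p\,\diff x+\tau^q\int_\Omega a(x)|u|^q\,\diff x,
\end{equation*}
so that $\min(\tau^p,\tau^q)\,\varrho_{\mathcal{H}}(u)\le \varrho_{\mathcal{H}}(\tau u)\le \max(\tau^p,\tau^q)\,\varrho_{\mathcal{H}}(u)$. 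I would also record, by monotone and dominated convergence, that for $u\neq 0$ the map $\tau\mapsto \varrho_{\mathcal{H}}(u/\tau)$ is continuous and strictly decreasing on $(0,\infty)$, tends to $+\infty$ as $\tau\to 0^+$ and to $0$ as $\tau\to\infty$.

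From this, I first prove (i). The continuity/monotonicity above plus the intermediate value theorem yields a unique $\tau_0>0$ with $\varrho_{\mathcal{H}}(u/\tau_0)=1$. Monotonicity gives $\varrho_{\mathcal{H}}(u/\tau)\le 1$ for $\tau\ge\tau_0$ and $\varrho_{\mathcal{H}}(u/\tau)>1$ for $\tau<\tau_0$, so from the very definition of the Luxemburg norm I get $\|u\|_{\mathcal{H}}=\tau_0$, which is exactly the statement in (i).

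Next I deduce (ii) by combining (i) with the scaling inequality. If $\|u\|_{\mathcal{H}}<1$, I pick $\tau\in(\|u\|_{\mathcal{H}},1)$ so that $\varrho_{\mathcal{H}}(u/\tau)\le 1$ and then $\varrho_{\mathcal{H}}(u)=\varrho_{\mathcal{H}}\bigl(\tau\cdot(u/\tau)\bigr)\le \tau^p<1$; the converse uses that $\varrho_{\mathcal{H}}(u)<1$ gives $\|u\|_{\mathcal{H}}\le 1$ by definition, and the case $\|u\|_{\mathcal{H}}=1$ is excluded by (i). The equality case $\|u\|_{\mathcal{H}}=1\Leftrightarrow \varrho_{\mathcal{H}}(u)=1$ is then immediate from (i), and the case $>1$ follows by elimination. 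For (iii) and (iv), I write $\varrho_{\mathcal{H}}(u)=\varrho_{\mathcal{H}}\bigl(\|u\|_{\mathcal{H}}\cdot(u/\|u\|_{\mathcal{H}})\bigr)$ and apply the scaling inequality with $\tau=\|u\|_{\mathcal{H}}$, using $\varrho_{\mathcal{H}}(u/\|u\|_{\mathcal{H}})=1$ from (i): for $\tau<1$ one has $\tau^q\le\varrho_{\mathcal{H}}(\tau v)\le\tau^p$ when $\varrho_{\mathcal{H}}(v)=1$, and for $\tau>1$ the bounds reverse, which is precisely (iii) and (iv). Finally, (v) and (vi) follow at once from (iii)–(iv): indeed, $\|u\|_{\mathcal{H}}\to 0$ is eventually $<1$, so $\varrho_{\mathcal{H}}(u)\le\|u\|_{\mathcal{H}}^p\to 0$, while if $\varrho_{\mathcal{H}}(u)\to 0$ then by (ii) eventually $\|u\|_{\mathcal{H}}<1$ and $\|u\|_{\mathcal{H}}^q\le\varrho_{\mathcal{H}}(u)\to 0$; the proof of (vi) is symmetric using the large-norm bounds.

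There is no real obstacle here; the only point that merits care is the strict monotonicity and continuity of $\tau\mapsto \varrho_{\mathcal{H}}(u/\tau)$ (needed for the uniqueness in (i)), which requires using that $u\not\equiv 0$ so that at least one of $\int |u|^p\,\diff x$ and $\int a(x)|u|^q\,\diff x$ is strictly positive, together with monotone convergence to let $\tau\to 0^+$ and $\tau\to\infty$. Once (i) is in hand, (ii)–(vi) are purely algebraic consequences of the two-sided scaling bound.
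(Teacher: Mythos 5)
Your proof is correct. Note, however, that the paper itself does not prove this proposition at all: it simply cites Liu--Dai \cite{Liu-Dai-2018} and Crespo-Blanco--Gasi\'nski--Harjulehto--Winkert \cite{Crespo-Blanco-Gasinski-Harjulehto-Winkert-2021}, so there is no in-paper argument to compare against. What you have written is essentially the standard proof that those references give, made self-contained: the exact scaling identity $\varrho_{\mathcal{H}}(\tau u)=\tau^p\int_\Omega|u|^p\,\diff x+\tau^q\int_\Omega a(x)|u|^q\,\diff x$ yields the two-sided bound $\min(\tau^p,\tau^q)\varrho_{\mathcal{H}}(u)\le\varrho_{\mathcal{H}}(\tau u)\le\max(\tau^p,\tau^q)\varrho_{\mathcal{H}}(u)$, and the strict monotonicity and continuity of $\tau\mapsto\varrho_{\mathcal{H}}(u/\tau)$ identify the Luxemburg norm as the unique $\tau_0$ with $\varrho_{\mathcal{H}}(u/\tau_0)=1$; items (ii)--(vi) then follow algebraically, exactly as you argue. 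Two cosmetic remarks: continuity of $\tau\mapsto\varrho_{\mathcal{H}}(u/\tau)$ needs no monotone or dominated convergence, since the formula above gives it explicitly as $\tau^{-p}A+\tau^{-q}B$ with finite constants $A,B$ (finiteness being the definition of $u\in L^{\mathcal{H}}(\Omega)$); and in (ii)--(iii) you silently divide by $\|u\|_{\mathcal{H}}$ or invoke (i), which presupposes $u\neq 0$ --- the case $u=0$ is trivial ($\varrho_{\mathcal{H}}(0)=0=\|0\|_{\mathcal{H}}$) but should be mentioned for completeness, since the hypothesis $u\neq 0$ is imposed only in item (i).
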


Furthermore, we define the seminormed space
\begin{align*}
	L^q_a(\Omega)=\left \{u\in M(\Omega)\,:\,\int_\Omega a(x) |u|^q \,\diff x< \infty \right \}
\end{align*}
endowed with the seminorm
\begin{align*}
	\|u\|_{q,a} = \left(\int_\Omega a(x) |u|^q \,\diff x \right)^{\frac{1}{q}}.
\end{align*}
While, the corresponding Musielak-Orlicz Sobolev space $W^{1,\mathcal{H}}(\Omega)$ is defined by
\begin{align*}
	W^{1,\mathcal{H}}(\Omega)= \Big \{u \in L^\mathcal{H}(\Omega) \,:\, |\nabla u| \in L^{\mathcal{H}}(\Omega) \Big\}
\end{align*}
equipped with the norm
\begin{align*}
	\|u\|_{1,\mathcal{H}}= \|\nabla u \|_{\mathcal{H}}+\|u\|_{\mathcal{H}},
\end{align*}
where $\|\nabla u\|_\mathcal{H}=\|\,|\nabla u|\,\|_{\mathcal{H}}$. Moreover, we denote by $W^{1,\mathcal{H}}_0(\Omega)$ the completion of $C^\infty_0(\Omega)$ in $W^{1,\mathcal{H}}(\Omega)$. From hypothesis \textnormal{(H)(i)}, we know that we can equip the space $\V$ with the equivalent norm given by
\begin{align*}
	\|u\|=\|\nabla u\|_{\mathcal{H}},
\end{align*}
see Proposition  2.16(ii) of Crespo-Blanco-Gasi\'nski-Harjulehto-Winkert \cite{Crespo-Blanco-Gasinski-Harjulehto-Winkert-2021}. It is known that $L^\mathcal{H}(\Omega)$, $W^{1,\mathcal{H}}(\Omega)$ and $W^{1,\mathcal{H}}_0(\Omega)$ are uniformly convex and so reflexive Banach spaces, see Colasuonno-Squassina \cite[Proposition 2.14]{Colasuonno-Squassina-2016} or Harjulehto-H\"{a}st\"{o} \cite[Theorem 6.1.4]{Harjulehto-Hasto-2019}.

We end this section by recalling the following embeddings for the spaces $L^\mathcal{H}(\Omega)$ and $W^{1,\mathcal{H}}_0(\Omega)$, see Colasuonno-Squassina \cite[Proposition 2.15]{Colasuonno-Squassina-2016} or Crespo-Blanco-Gasi\'nski-Harjulehto-Winkert \cite[Propositions 2.17 and 2.19]{Crespo-Blanco-Gasinski-Harjulehto-Winkert-2021}.

\begin{proposition}\label{proposition_embeddings}
	Let \textnormal{(H)(i)} be satisfied and let $p^*$ be the critical exponent to $p$ given in \eqref{critical_exponent}. Then the following embeddings hold:
	\begin{enumerate}
		\item[\textnormal{(i)}]
			$\Lp{\mathcal{H}} \hookrightarrow \Lp{r}$ and $\V\hookrightarrow \Wpzero{r}$ are continuous for all $r\in [1,p]$;
		\item[\textnormal{(ii)}]
			$\V \hookrightarrow \Lp{r}$ is continuous for all $r \in [1,p^*]$ and compact for all $r \in [1,p^*)$;
		\item[\textnormal{(iii)}]
			$\Lp{\mathcal{H}} \hookrightarrow L^q_a(\Omega)$ is continuous;
		\item[\textnormal{(iv)}]
			$\Lp{q}\hookrightarrow\Lp{\mathcal{H}} $ is continuous.
	\end{enumerate}
\end{proposition}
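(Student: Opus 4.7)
The four embeddings are essentially consequences of the modular/norm relationship in Proposition \ref{proposition_modular_properties} combined with elementary Lebesgue inequalities on the bounded domain $\Omega$, so my plan is to handle each part separately but with a common thread: exploit the Luxemburg definition of $\|\cdot\|_\mathcal{H}$ by controlling the modular.

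For part \textnormal{(i)}, the plan is to observe that $|u|^p \leq \mathcal{H}(x,|u|)$ pointwise. Given $u\in L^\mathcal{H}(\Omega)\setminus\{0\}$, set $\tau:=\|u\|_\mathcal{H}$; by Proposition \ref{proposition_modular_properties}\textnormal{(i)} we have $\varrho_\mathcal{H}(u/\tau)=1$, which immediately yields $\int_\Omega |u/\tau|^p\,\diff x \le 1$, i.e.\ $\|u\|_p\le \|u\|_\mathcal{H}$. For $r\in[1,p]$, an application of H\"older's inequality on the bounded set $\Omega$ gives $\|u\|_r\le |\Omega|^{\frac{p-r}{pr}}\|u\|_p$, which combines to the desired bound. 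The gradient version follows by applying the same reasoning to $|\nabla u|$ and using the equivalent norm $\|u\|=\|\nabla u\|_\mathcal{H}$ on $\V$. Part \textnormal{(iii)} is handled by the exact same trick: from $\varrho_\mathcal{H}(u/\tau)=1$ one reads off $\int_\Omega a(x)|u/\tau|^q\,\diff x\le 1$, yielding $\|u\|_{q,a}\le \|u\|_\mathcal{H}$.

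Part \textnormal{(ii)} is a corollary of part \textnormal{(i)} together with the classical Rellich--Kondrachov/Sobolev embedding theorem. Indeed, \textnormal{(i)} gives a continuous inclusion $\V\hookrightarrow \Wpzero{p}$; since $1<p<N$, the classical Sobolev embedding provides a continuous inclusion $\Wpzero{p}\hookrightarrow \Lp{r}$ for $r\in[1,p^*]$ and a compact one for $r\in[1,p^*)$, and composition preserves both properties.

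For part \textnormal{(iv)} the argument is slightly less transparent because the two exponents $p\ne q$ prevent a direct modular comparison, so I would work with the Luxemburg definition directly. Given $u\in\Lp{q}\setminus\{0\}$, normalize $v:=u/\|u\|_q$ so that $\|v\|_q=1$, and for $C>0$ compute
\begin{align*}
\varrho_\mathcal{H}\!\left(\tfrac{v}{C}\right)=\frac{1}{C^p}\int_\Omega |v|^p\,\diff x+\frac{1}{C^q}\int_\Omega a(x)|v|^q\,\diff x\le \frac{|\Omega|^{\frac{q-p}{q}}}{C^p}+\frac{\|a\|_\infty}{C^q},
\end{align*}
where the first term is estimated by H\"older's inequality and the second by $a\in L^\infty(\Omega)$. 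Choosing $C=C(p,q,|\Omega|,\|a\|_\infty)$ large enough to make the right-hand side at most $1$, the definition of the Luxemburg norm yields $\|v\|_\mathcal{H}\le C$, i.e.\ $\|u\|_\mathcal{H}\le C\|u\|_q$.

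The only step that is not completely automatic is part \textnormal{(iv)}: the modular $\varrho_\mathcal{H}$ is not $r$-homogeneous for a single $r$, so one cannot simply write a one-line norm inequality. The right idea is to rescale and then pick the Luxemburg constant to force the modular below $1$, which is why I keep the two terms $C^{-p}$ and $C^{-q}$ separate. All other parts reduce to the modular identity in Proposition \ref{proposition_modular_properties}\textnormal{(i)} and H\"older.
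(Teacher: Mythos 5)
Your proposal is correct. Note that the paper itself does not prove this proposition at all: it is recalled from the literature, with pointers to Colasuonno--Squassina [Proposition 2.15] and Crespo-Blanco--Gasi\'nski--Harjulehto--Winkert [Propositions 2.17 and 2.19]. Your self-contained argument is essentially the standard proof given in those references: the pointwise bounds $|u|^p\leq \mathcal{H}(x,|u|)$ and $a(x)|u|^q\leq \mathcal{H}(x,|u|)$ together with the unit-ball property $\varrho_{\mathcal{H}}(u/\|u\|_{\mathcal{H}})=1$ yield (i) and (iii); composing $\V\hookrightarrow \Wpzero{p}$ with the classical Sobolev and Rellich--Kondrachov embeddings yields (ii); and the rescaling argument forcing the modular below $1$ yields (iv), where your care in keeping the $C^{-p}$ and $C^{-q}$ terms separate is exactly the right way to handle the lack of homogeneity. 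One small point you may wish to make explicit in (i): to conclude $u\in \Wpzero{r}$ (rather than merely $u\in W^{1,r}(\Omega)$) one uses that $\V$ is the completion of $C_0^\infty(\Omega)$ in the $W^{1,\mathcal{H}}$-norm, so an approximating sequence in $C^\infty_0(\Omega)$ is Cauchy in $W^{1,r}_0(\Omega)$ by your norm estimate and its limit identifies with $u$; this is routine but it is the step that places the limit in the correct space.
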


\begin{remark}
	Since $q<p^*$ by hypothesis \textnormal{(H)(i)}, we know from Proposition \ref{proposition_embeddings}\textnormal{(ii)} that $\V \hookrightarrow L^q(\Omega)$ is compact.
\end{remark}

\section{Analysis of the fibering function}\label{sec_3}

As mentioned in the Introduction, the proof of Theorem \ref{main_result} relies on the fibering map corresponding to our problem. For this purpose, we recall that the energy functional $J_\lambda\colon \V \to \R$ related to problem \eqref{problem} is given by
\begin{align*}
	J_\lambda (u) 
	&= M[\phi_{\mathcal{H}}(\nabla u)] -\frac{\lambda}{1-\gamma}\int_\Omega |u|^{1-\gamma}\,\diff x - \frac{1}{r}\int_\Omega |u|^r\,\diff x\\
	&=\left[a_0\phi_\mathcal{H}(\nabla u) 
	+\frac{b_0}{\theta}\phi_\mathcal{H}^{\theta}(\nabla u)\right]
	- \frac{\lambda}{1-\gamma}\int_\Omega |u|^{1-\gamma}\,\diff x
	- \frac{1}{r}\int_\Omega |u|^r\,\diff x.
\end{align*}
Due to the presence of the singular term, we know that $J_\lambda$ is not $C^1$. For $u \in \V \setminus \{0\}$, we introduce the fibering function $\psi_u\colon [0,\infty) \to \R$ defined by 
\begin{align*}
	\psi_u(t)= J_\lambda(tu)\quad \text{for all }t\geq 0,
\end{align*}
which gives
\begin{align*}
	\psi_u(t)
	= \left[a_0\phi_\mathcal{H}(t\nabla u) 
	+\frac{b_0}{\theta}\phi_\mathcal{H}^{\theta}(t\nabla u)\right]
	- \lambda \frac{t^{1-\gamma}}{1-\gamma}\int_\Omega |u|^{1-\gamma}\,\diff x
	- \frac{t^r}{r}\int_\Omega |u|^r\,\diff x.
\end{align*}
Note that $\psi_u \in C^{\infty}((0,\infty))$. In particular, we have for $t>0$
\begin{align*}
	\psi_u'(t) 
	&= \left[a_0 +b_0\phi_\mathcal{H}^{\theta-1}(t\nabla u)\right] \left(t^{p-1}\|\nabla u\|_p^p+t^{q-1} \|\nabla u\|_{q,a}^q\right)
	- \lambda t^{-\gamma}\int_\Omega |u|^{1-\gamma}\,\diff x 
	- t^{r-1}\int_\Omega |u|^r\,\diff x
\end{align*}
and
\begin{align}\label{second-derivative}
	\psi_u''(t)
	& =   \left[a_0 +b_0\phi_\mathcal{H}^{\theta-1}(t\nabla u)\right] \left[(p-1)t^{p-2}\|\nabla u\|_p^p+(q-1)t^{q-2} \|\nabla u\|_{q,a}^q\right]\nonumber\\
	& \quad + b_0(\theta -1)\phi_\mathcal{H}^{\theta-2}(t\nabla u)\left(t^{p-1}\|\nabla u\|_p^p+t^{q-1} \|\nabla u\|_{q,a}^q\right)^2\\
	&\quad  + \lambda \gamma t^{-\gamma-1}\int_\Omega |u|^{1-\gamma}\,\diff x 
	-(r-1) t^{r-2}\int_\Omega |u|^r\,\diff x.\nonumber
\end{align}
Based on this we can introduce the Nehari manifold related to our problem which is defined by
\begin{align*}
	\mathcal{N}_\lambda = \left\{u\in \V\setminus\{0\}\,:\, \psi_u'(1)=0\right\}.
\end{align*}
Therefore, we have $u \in \mathcal{N}_\lambda $ if and only if
\begin{align*}
	\left[a_0 +b_0\phi_\mathcal{H}^{\theta-1}(\nabla u)\right] \left(\|\nabla u\|_p^p+ \|\nabla u\|_{q,a}^q\right)=  \lambda \int_\Omega |u|^{1-\gamma}\,\diff x + \int_\Omega |u|^r\,\diff x.
\end{align*}
Also $tu\in \mathcal{N}_\lambda $ if and only if $\psi_{tu}'(1)=0$. It is clear that $\mathcal{N}_\lambda $ contains all weak solutions of \eqref{problem} but it is smaller than the whole space $\V$. For our further study, we need to decompose the set $\mathcal{N}_\lambda $ in the following disjoints sets
\begin{align*}
	\mathcal{N}_\lambda^{+} 
	&= \left\{u\in \mathcal{N}_\lambda \,:\, \psi_u''(1)> 0\right\},\\
	\mathcal{N}_\lambda^{-} 
	&= \left\{u\in \mathcal{N}_\lambda \,:\, \psi_u''(1)< 0\right\},\\
	\mathcal{N}_\lambda^{\circ} 
	&= \left\{u\in \mathcal{N}_\lambda \,:\, \psi_u''(1)= 0\right\}.
\end{align*}

Now, we show the coercivity of the energy functional $J_\lambda$ restricted to the Nehari manifold $\mathcal{N}_\lambda$.

\begin{lemma}\label{ksdp-lem1}
	Let hypotheses \textnormal{(H)} be satisfied. Then $J_\lambda \big|_{\mathcal{N}_\lambda }$ is coercive and bounded from below for any $\lambda>0$.
\end{lemma}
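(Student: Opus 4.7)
The plan is to use the Nehari constraint $\psi_u'(1)=0$ to eliminate the superlinear term $\int_\Omega |u|^r\,\diff x$ from the expression of $J_\lambda(u)$, and then exploit the strict inequality $\theta < r/q$ together with $r>q$ to obtain positive coefficients in front of the principal terms.

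For any $u\in\mathcal{N}_\lambda$, the Nehari identity yields
\begin{align*}
\int_\Omega |u|^r\,\diff x = \bigl[a_0 + b_0\phi_{\mathcal H}^{\theta-1}(\nabla u)\bigr]\bigl(\|\nabla u\|_p^p + \|\nabla u\|_{q,a}^q\bigr) - \lambda \int_\Omega |u|^{1-\gamma}\,\diff x.
\end{align*}
Substituting this into $J_\lambda(u)$ and collecting terms, I expect to arrive at an identity of the form
\begin{align*}
J_\lambda(u) = a_0\left[\phi_{\mathcal H}(\nabla u) - \frac{1}{r}\varrho_{\mathcal H}(\nabla u)\right] + b_0\left[\frac{1}{\theta}\phi_{\mathcal H}^\theta(\nabla u) - \frac{1}{r}\phi_{\mathcal H}^{\theta-1}(\nabla u)\varrho_{\mathcal H}(\nabla u)\right] + \lambda\left(\frac{1}{r}-\frac{1}{1-\gamma}\right)\int_\Omega |u|^{1-\gamma}\,\diff x,
\end{align*}
where $\varrho_{\mathcal H}(\nabla u) = \|\nabla u\|_p^p + \|\nabla u\|_{q,a}^q$. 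The key elementary inequality to invoke is $\phi_{\mathcal H}(\nabla u) \geq \frac{1}{q}\varrho_{\mathcal H}(\nabla u)$, which follows from $p<q$. This turns the $a_0$-bracket into something bounded below by $(\frac{1}{q}-\frac{1}{r})\varrho_{\mathcal H}(\nabla u) \geq 0$ since $r>q\theta \geq q$; and it turns the $b_0$-bracket into something bounded below by $(\frac{1}{\theta}-\frac{q}{r})\phi_{\mathcal H}^\theta(\nabla u) > 0$, where the positivity is precisely where the hypothesis $\theta<r/q$ is used.

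To conclude coercivity, I combine these two nonnegative contributions with Proposition \ref{proposition_modular_properties}(iv), which gives $\varrho_{\mathcal H}(\nabla u) \geq \|u\|^p$ for $\|u\|>1$, hence also $\phi_{\mathcal H}^\theta(\nabla u) \geq q^{-\theta}\|u\|^{p\theta}$. Meanwhile, the negative contribution is controlled by the continuous embedding $\V\hookrightarrow L^{1-\gamma}(\Omega)$ (which follows from Proposition \ref{proposition_embeddings}(ii) and H\"older's inequality since $\Omega$ is bounded), giving $\int_\Omega |u|^{1-\gamma}\,\diff x \leq C\|u\|^{1-\gamma}$. Thus, for $\|u\|>1$,
\begin{align*}
J_\lambda(u) \geq \frac{b_0}{q^\theta}\left(\frac{1}{\theta}-\frac{q}{r}\right)\|u\|^{p\theta} - \lambda C\left(\frac{1}{1-\gamma}-\frac{1}{r}\right)\|u\|^{1-\gamma},
\end{align*}
and since $p\theta \geq p > 1 > 1-\gamma$, the first term dominates as $\|u\|\to\infty$. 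For $\|u\|\leq 1$, all nonnegative contributions are bounded from below by zero, while the singular term is bounded by a constant depending on $\lambda$, so $J_\lambda$ is bounded below on this set. Combined, this yields both coercivity and boundedness from below on $\mathcal{N}_\lambda$.

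The main obstacle I anticipate is the bookkeeping in the substitution step: one must carefully track the interplay between $\phi_{\mathcal H}$ (which carries the weights $1/p$ and $1/q$) and $\varrho_{\mathcal H}$ (the natural modular appearing in the Nehari identity), and ensure that the inequality $\phi_{\mathcal H} \geq \varrho_{\mathcal H}/q$ is applied consistently so that both the $a_0$ and $b_0$ contributions become manifestly nonnegative. Once this is done, the hypothesis $\theta\in[1,r/q)$ makes coercivity essentially automatic.
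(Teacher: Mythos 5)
Your proposal is correct and follows essentially the same route as the paper's proof: substitute the Nehari identity to eliminate the $r$-term, obtain the same decomposition of $J_\lambda(u)$ into an $a_0$-bracket, a $b_0$-bracket and a singular remainder, make both brackets nonnegative via $\varrho_{\mathcal H}(\nabla u)\le q\,\phi_{\mathcal H}(\nabla u)$ together with $r>q\theta$, and conclude with the modular--norm inequality and the embedding bound on $\int_\Omega|u|^{1-\gamma}\,\diff x$. The only (harmless) differences are cosmetic: you keep $\phi_{\mathcal H}^\theta$ intact with coefficient $\frac{1}{\theta}-\frac{q}{r}$ where the paper splits into $\|\nabla u\|_p^p$ and $\|\nabla u\|_{q,a}^q$ pieces, and your explicit treatment of the set $\|u\|\le 1$ for boundedness from below is in fact slightly more careful than the paper's.
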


\begin{proof}
	Let $u\in \mathcal{N}_\lambda $ be such that $\|u\|>1$. The definition of $\mathcal{N}_\lambda$ implies that
	\begin{align}\label{coercivity-1}
		-\frac{1}{r}\int_\Omega |u|^r\,\diff x
		=-\frac{1}{r}\left[a_0 +b_0\phi_\mathcal{H}^{\theta-1}(\nabla u)\right] \left(\|\nabla u\|_p^p+ \|\nabla u\|_{q,a}^q\right)+\frac{\lambda}{r}  \int_\Omega |u|^{1-\gamma}\,\diff x.
	\end{align}
	Using \eqref{coercivity-1}, $1-\gamma<1<p<q\leq\theta q<r$ along with Proposition \ref{proposition_modular_properties}(iv) and Proposition \ref{proposition_embeddings}(ii), we get 
	\begin{align*}
		J_\lambda(u) 
		& =\left[a_0\phi_\mathcal{H}(\nabla u)
		+\frac{b_0}{\theta}\phi_\mathcal{H}^{\theta}(\nabla u)\right]
		- \frac{\lambda}{1-\gamma}\int_\Omega |u|^{1-\gamma}\,\diff x
		- \frac{1}{r}\int_\Omega |u|^r\,\diff x\\
		& = a_0 \left(\phi_\mathcal{H}(\nabla u)-\frac{1}{r}\varrho_\mathcal{H}(\nabla u)\right) + b_0\phi_\mathcal{H}^{\theta-1}(\nabla u) \left(\frac{1}{\theta}\phi_\mathcal{H}(\nabla u)-\frac{1}{r}\varrho_\mathcal{H}(\nabla u)\right)\\
		&\quad +\lambda \left(\frac{1}{r}-\frac{1}{1-\gamma}\right)\int_\Omega |u|^{1-\gamma}\,\diff x\\
		&\geq  a_0\left[ \left(\frac{1}{p}-\frac{1}{r}\right) \|\nabla u\|_p^p + \left(\frac{1}{q}-\frac{1}{r}\right) \|\nabla u\|_{q,a}^q\right]\\
		&\quad+ b_0\phi_\mathcal{H}^{\theta-1}(\nabla u) \left[ \left(\frac{1}{p\theta}-\frac{1}{r}\right) \|\nabla u\|_p^p + \left(\frac{1}{q\theta}-\frac{1}{r}\right) \|\nabla u\|_{q,a}^q\right]
		-C_1 \|u\|^{1-\gamma}\\
		& \geq  b_0\phi_\mathcal{H}^{\theta-1}(\nabla u)\left(\frac{1}{q\theta}-\frac{1}{r}\right) \varrho_\mathcal{H}(\nabla u)
		-C_1\int_\Omega |u|^{1-\gamma}\,\diff x\\
		& \geq C_2 \|u\|^{p\theta}-C_1 \|u\|^{1-\gamma},
	\end{align*}
	with positive constants $C_1$ and $C_2$, where we have used the estimate
	\begin{align*}
		\phi_\mathcal{H}^{\theta-1}(\nabla u)\varrho_{\mathcal{H}}(\nabla u)
		=\left[\frac{1}{p}\|\nabla u\|_p^p+\frac{1}{q}\|\nabla u \|^q_{q,a}\right]^{\theta-1}\left[\|\nabla u\|_p^p+\|\nabla u\|^q_{q,a}\right]
		\geq \frac{1}{q^{\theta-1}}\varrho_{\mathcal{H}}^{\theta}(\nabla u) \geq \frac{1}{q^{\theta-1}}\|u\|^{p\theta}.
	\end{align*}
	Since $ p\theta\geq p>1-\gamma$, the coercivity of $J_\lambda\big|_{\mathcal{N}_\lambda }$ follows.  If we set
	\begin{align*}
		h(t) =C_1 t^{p\theta}- C_2t^{1-\gamma}\quad \text{for all }t>0,
	\end{align*}
	then it is easy to see that $h$ attains its unique minimum at
	\begin{align*}
		t_0 = \left(\frac{C_2(1-\gamma)}{C_1p\theta}\right)^{\frac{1}{p\theta-1+\gamma}}.
	\end{align*}
	Hence, $J_\lambda\big|_{\mathcal{N}_\lambda }$ is bounded from below. This completes the proof.
\end{proof}

Let $S$ be the best Sobolev constant in $W_0^{1,p}(\Omega)$ defined as 
\begin{align}\label{best-sobolev-constant}
	S=  \inf_{{ u\in W^{1,p}_0(\Omega)\setminus \{0\}}}\frac{\|\nabla u\|^p_p}{\|u\|_{p^*}^p}.
\end{align}

The next result shows the emptiness of $\mathcal{N}_\lambda^\circ$ for small values of $\lambda$.

\begin{lemma}\label{ksdp-lem3-}
	Let hypotheses \textnormal{(H)} be satisfied. Then there exists $\Lambda_1>0$ such that $\mathcal{N}_\lambda^\circ=\emptyset$ for all $\lambda \in (0,\Lambda_1)$.
\end{lemma}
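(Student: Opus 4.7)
The plan is to argue by contradiction. Suppose $u\in\mathcal N_\lambda^\circ$, so that both $\psi_u'(1)=0$ and $\psi_u''(1)=0$. Setting $A=\|\nabla u\|_p^p$, $B=\|\nabla u\|_{q,a}^q$ and $\phi=\phi_\mathcal H(\nabla u)$ for brevity, the linear combination $\psi_u''(1)+\gamma\,\psi_u'(1)=0$ kills the singular contribution and yields the key identity
\begin{equation*}
\bigl[a_0+b_0\phi^{\theta-1}\bigr]\bigl[(p-1+\gamma)A+(q-1+\gamma)B\bigr]+b_0(\theta-1)\phi^{\theta-2}(A+B)^2=(r-1+\gamma)\int_\Omega|u|^r\,\diff x,
\end{equation*}
which I will refer to as $(\star)$. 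The program is to exploit $(\star)$ twice: once to derive a $\lambda$-independent lower bound on $\|u\|$, and once, in conjunction with $\psi_u'(1)=0$, to produce an upper bound on $\|u\|$ that vanishes with $\lambda$.

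For the lower bound I would discard all non-negative contributions in the LHS of $(\star)$ except $b_0(p-1+\gamma)\phi^{\theta-1}(A+B)$, and use the elementary modular estimate $\phi\ge\varrho_\mathcal H(\nabla u)/q$ (immediate from $p\le q$) to conclude $\mathrm{LHS}(\star)\ge c\,\varrho_\mathcal H(\nabla u)^\theta$. On the other side, Proposition~\ref{proposition_embeddings}(ii) gives $\int_\Omega|u|^r\,\diff x\le C\|u\|^r$. Splitting according to $\|u\|\ge 1$ or $\|u\|<1$ and applying Proposition~\ref{proposition_modular_properties}(iii)--(iv), the comparison reduces to $\|u\|^{p\theta}\le C\|u\|^r$ or $\|u\|^{q\theta}\le C\|u\|^r$; both are obstructed by the strict inequality $r>q\theta\ge p\theta$ from \textnormal{(H)(ii)}, forcing $\|u\|\ge c_0>0$ uniformly in $\lambda$.

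For the opposing bound, I would estimate the LHS of $(\star)$ from above, using $(p-1+\gamma)A+(q-1+\gamma)B\le(q-1+\gamma)(A+B)$ and $(A+B)\le q\phi$ to absorb the quadratic $\phi^{\theta-2}$ term into $\phi^{\theta-1}(A+B)$. A short calculation yields $(r-1+\gamma)\int_\Omega|u|^r\,\diff x\le(q\theta-1+\gamma)K$ with $K:=[a_0+b_0\phi^{\theta-1}](A+B)$, so $\int_\Omega|u|^r\,\diff x\le\frac{q\theta-1+\gamma}{r-1+\gamma}K<K$. Combined with $\psi_u'(1)=0$, which rewrites as $K=\lambda\int_\Omega|u|^{1-\gamma}\,\diff x+\int_\Omega|u|^r\,\diff x$, this gives
\begin{equation*}
\lambda\int_\Omega|u|^{1-\gamma}\,\diff x\ \ge\ \frac{r-q\theta}{r-1+\gamma}\,K.
\end{equation*}
Bounding the LHS above by $C\lambda\|u\|^{1-\gamma}$ via H\"older's inequality and $\V\hookrightarrow L^1(\Omega)$, and $K$ below by $c\|u\|^{p\theta}$ (resp.~$c\|u\|^{q\theta}$) depending on whether $\|u\|\ge 1$ or $\|u\|<1$ through the modular inequalities, I obtain $\|u\|\le C\lambda^{1/\alpha}$ for some $\alpha\in\{p\theta-1+\gamma,\,q\theta-1+\gamma\}$, both positive because $p\theta\ge p>1>1-\gamma$.

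The two estimates together give $c_0\le\|u\|\le C\lambda^{1/\alpha}\to 0$ as $\lambda\to 0^+$, which is impossible once $\lambda$ is below a threshold $\Lambda_1>0$; this $\Lambda_1$ is the desired constant. The delicate point is the step producing the strict inequality $\int_\Omega|u|^r\,\diff x<K$: the positivity of the factor $(r-q\theta)/(r-1+\gamma)$ hinges precisely on the strict gap $r>q\theta$ built into hypothesis \textnormal{(H)(ii)}, and the bookkeeping required to trace the combined $a_0$, $b_0$, and quadratic-$\phi^{\theta-2}$ contributions into the single constant $K$ is where the main technical work lies.
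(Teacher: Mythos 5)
Your proposal is correct and follows essentially the same strategy as the paper: both form the combination $\psi_u''(1)+\gamma\psi_u'(1)=0$ to eliminate the singular term, extract a $\lambda$-independent lower bound on the size of $u$ from the resulting identity together with the subcritical growth of $\int_\Omega|u|^r\,\diff x$ and the gap $r>q\theta$, and pair it with an upper bound of order $\lambda^{1/(p\theta-1+\gamma)}$ obtained by isolating $\lambda\int_\Omega|u|^{1-\gamma}\,\diff x$ (the paper via the combination $(r-1)\psi_u'(1)-\psi_u''(1)$, you via substituting $\psi_u'(1)=0$ back into the bound $\mathrm{LHS}(\star)\le(q\theta-1+\gamma)K$ — algebraically the same manipulation). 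The only cosmetic difference is that you phrase the estimates through the modular $\varrho_{\mathcal H}$ and the Musielak norm with a case split, while the paper works with $\|\nabla u\|_p$ and the Sobolev constant $S$ throughout.
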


\begin{proof}
	Arguing by contradiction, we suppose that for each $\lambda>0$, there exists  $u\in \V\setminus\{0\}$ such that $\psi_u'(1)=0=\psi_u''(1)$. That is
	\begin{align}\label{ksdp-11}
		\left[a_0 +b_0\phi_\mathcal{H}^{\theta-1}(\nabla u)\right] \left(\|\nabla u\|_p^p+  \|\nabla u\|_{q,a}^q\right)=\lambda \int_\Omega |u|^{1-\gamma}\,\diff x + \int_\Omega |u|^r\,\diff x
	\end{align}
	and
	\begin{align} \label{ksdp-12}
		\begin{split}
			& \left[a_0 +b_0\phi_\mathcal{H}^{\theta-1}(\nabla u)\right] \left[(p-1)\|\nabla u\|_p^p+(q-1) \|\nabla u\|_{q,a}^q\right]\\
			&\quad+ b_0(\theta -1)\phi_\mathcal{H}^{\theta-2}(\nabla u)\left(\|\nabla u\|_p^p+ \|\nabla u\|_{q,a}^q\right)^2  = -\lambda \gamma \int_\Omega |u|^{1-\gamma}\,\diff x +(r-1) \int_\Omega |u|^r\,\diff x.
		\end{split}
	\end{align}
	Multiplying \eqref{ksdp-11} with $\gamma$ and adding it to \eqref{ksdp-12} yields
	\begin{align}\label{ksdp-13}
		\begin{split}
			&\left[a_0 +b_0\phi_\mathcal{H}^{\theta-1}(\nabla u)\right] \left[(p-1+\gamma)\|\nabla u\|_p^p+(q-1+\gamma) \|\nabla u\|_{q,a}^q\right]\\
			&\quad+ b_0(\theta -1)\phi_\mathcal{H}^{\theta-2}(\nabla u)\left(\|\nabla u\|_p^p+ \|\nabla u\|_{q,a}^q\right)^2  = (r-1+\gamma) \int_\Omega |u|^r\,\diff x.
		\end{split}
	\end{align}
	On the other hand, subtracting \eqref{ksdp-12} from \eqref{ksdp-11} multiplied by $(r-1)$,  we obtain
	\begin{align}\label{ksdp-14}
		\begin{split}
			&\left[a_0 +b_0\phi_\mathcal{H}^{\theta-1}(\nabla u)\right] \left[(r-p)\|\nabla u\|_p^p+(r-q) \|\nabla u\|_{q,a}^q\right]\\
			&\quad- b_0(\theta -1)\phi_\mathcal{H}^{\theta-2}(\nabla u)\left(\|\nabla u\|_p^p+ \|\nabla u\|_{q,a}^q\right)^2  =
			(r-1+\gamma)\lambda \int_\Omega |u|^{1-\gamma}\,\diff x.
		\end{split}
	\end{align}
	We define the functional $T_\lambda \colon\mathcal{N}_\lambda \to \R$ given by
	\begin{align*}
		T_\lambda(u) &= \frac{\left[a_0 +b_0\phi_\mathcal{H}^{\theta-1}(\nabla u)\right] \left[(p-1+\gamma)\|\nabla u\|_p^p+(q-1+\gamma) \|\nabla u\|_{q,a}^q\right]}{ (r-1+\gamma)}\\
		&\quad +\frac{ b_0(\theta -1)\phi_\mathcal{H}^{\theta-2}(\nabla u)\left(\|\nabla u\|_p^p+ \|\nabla u\|_{q,a}^q\right)^2}{ (r-1+\gamma)}- \int_\Omega |u|^{r}\,\diff x.
	\end{align*}
	From \eqref{ksdp-13} we see that $T_\lambda (u)=0$ for all $u\in \mathcal{N}_\lambda^\circ$. Since $\theta \geq 1$, using H\"older's inequality and \eqref{best-sobolev-constant} along with the estimate $p \phi_{\mathcal{H}}(\nabla u) \geq \|\nabla u\|_p^p$ we obtain
	\begin{align}\label{estimate:1}
		\begin{split}
			T_\lambda (u) 
			&\geq  \frac{(p-1+\gamma)}{(r-1+\gamma)} \left(a_0\|\nabla u\|_p^p+ \frac{b_0}{p^{\theta-1}}\|\nabla u\|^{p\theta}_p\right) - S^{-\frac{r}{p}}|\Omega|^{1-\frac{r}{p^*}}\|\nabla u\|^r_p \\
			& \geq \frac{(p-1+\gamma)}{(r-1+\gamma)} \left( \frac{b_0}{p^{\theta-1}}\|\nabla u\|^{p\theta}_p\right) - S^{-\frac{r}{p}}|\Omega|^{1-\frac{r}{p^*}}\|\nabla u\|^r_p\\
			& = \|\nabla u\|^r_p \left( A \|\nabla u\|^{p\theta-r}_p -B\right),
		\end{split}
	\end{align}
	where
	\begin{align*}
		A:=  \left(\frac{(p-1+\gamma)b_0}{p^{\theta-1}(r-1+\gamma)}\right)>0
		\quad \text{and}\quad
		B:= S^{-\frac{r}{p}}|\Omega|^{1-\frac{r}{p^*}}>0.
	\end{align*}

	Since $p<q$ it is easy to see that 
	\begin{align}\label{ineq:2}
		\left(\|\nabla u\|_p^p+ \|\nabla u\|_{q,a}^q\right) \leq q\phi_\mathcal{H}(\nabla u).
	\end{align}

Using \eqref{ineq:2} in \eqref{ksdp-14}, H\"older's inequality and the best Sobolev constant $S$ defined in \eqref{best-sobolev-constant}, it follows that
	\begin{align*}
		& b_0\phi_\mathcal{H}^{\theta-1}(\nabla u)\left[(r-p-q(\theta-1))\|\nabla u\|_p^p+ (r-q-q(\theta-1)) \|\nabla u\|_{q,a}^q\right]\\
		&\leq a_0 \left[(r-p)\|\nabla u\|_p^p+(r-q) \|\nabla u\|_{q,a}^q\right]\\
		&\quad+ b_0\phi_\mathcal{H}^{\theta-1}(\nabla u)\left[(r-p-q(\theta-1))\|\nabla u\|_p^p+ (r-q-q(\theta-1)) \|\nabla u\|_{q,a}^q\right]\\
		& \leq \left[a_0 +b_0\phi_\mathcal{H}^{\theta-1}(\nabla u)\right] \left[(r-p)\|\nabla u\|_p^p+(r-q) \|\nabla u\|_{q,a}^q\right]\\
		&\quad- b_0(\theta -1)\phi_\mathcal{H}^{\theta-2}(\nabla u)\left(\|\nabla u\|_p^p+ \|\nabla u\|_{q,a}^q\right)^2  \\
		&=  (r-1+\gamma) \lambda \int_\Omega |u|^{1-\gamma}\,\diff x\\
		& \leq  (r-1+\gamma) \lambda |\Omega|^{1-\frac{1-\gamma}{p^*}}S^{-\frac{1-\gamma}{p}}\|\nabla u\|_p^{1-\gamma}.
	\end{align*}
	Again using $p \phi_{\mathcal{H}}(\nabla u) \geq \|\nabla u\|_p^p$, this implies
	\begin{align*}
		\frac{b_0(r-p-q(\theta-1))}{p^{\theta-1}}\|\nabla u\|_p^{p\theta}\leq (r-1+\gamma) \lambda |\Omega|^{1-\frac{1-\gamma}{p^*}}S^{-\frac{1-\gamma}{p}}\|\nabla u\|_p^{1-\gamma}.
	\end{align*}
	From $q\theta<r$ and $p<q$ we conclude that
	\begin{align}\label{ksdp-15}
		\|\nabla u\|_p \leq \left( \frac{(r-1+\gamma) \lambda |\Omega|^{1-\frac{1-\gamma}{p^*}}S^{-\frac{1-\gamma}{p}}p^{\theta-1} }{b_0(r-p-q(\theta-1))}\right)^{\frac{1}{p\theta-1+\gamma}}=:C\lambda^{\frac{1}{p\theta-1+\gamma}}\quad\text{with }C>0.
	\end{align}

	Using \eqref{ksdp-15} along with $\theta < \frac{r}{q}< \frac{r}{p}$, we get from \eqref{estimate:1} that
	\begin{align*}
		T_\lambda (u) \geq 
		\|\nabla u\|^r_p \left( A
			(C \lambda^{\frac{1}{p\theta-1+\gamma}} )^{p\theta-r} -B\right).
	\end{align*} 
	Setting
	\begin{align*}
		\Lambda_1 := \left(\frac{A}{BC^{r-p\theta}}\right)^{\frac{p\theta-1+\gamma}{r-p\theta}},
	\end{align*}
	we see that $T_\lambda (u) >0$ whenever $\lambda \in (0,\Lambda_1)$ contradicting the fact that $T_\lambda (u)=0$ for all $u\in \mathcal{N}_\lambda^\circ$. This proves the result.
\end{proof}

Let us now analyze the map $\psi_u'(t)$ in more detail. First, we can write 
\begin{align}\label{ksdp-6}
	\psi_u'(t)= t^{-\gamma} \left(\sigma_u(t) -\lambda \int_\Omega |u|^{1-\gamma}\,\diff x\right), \quad t>0,
\end{align}
where 
\begin{align*}
	\sigma_u(t)= \left[a_0 +b_0\phi_\mathcal{H}^{\theta-1}(t\nabla u)\right] \left(t^{p-1+\gamma}\|\nabla u\|_p^p+t^{q-1+\gamma} \|\nabla u\|_{q,a}^q\right)
	- t^{r-1+\gamma}\int_\Omega |u|^r\,\diff x.
\end{align*}

From the definition in \eqref{ksdp-6} it is clear that $tu\in \mathcal{N}_\lambda $ if and only if
\begin{align}\label{sigma-4}
	\sigma_u(t)= \lambda\int_\Omega |u|^{1-\gamma}\,\diff x.
\end{align}
The next lemma shows that the sets $\mathcal{N}_\lambda^+$ and $\mathcal{N}_\lambda^-$ are nonempty, whenever $\lambda$ is sufficiently small.

\begin{lemma}\label{ksdp-lem2}
	Let hypotheses \textnormal{(H)} be satisfied and let $u\in \V\setminus\{0\}$. Then there exist $\Lambda_2>0$ and unique $t_1^u<t_{\max}^u<t_2^u$ such that 
	\begin{align*}
		t_1^u u \in \mathcal{N}_\lambda^+,\quad t_2^u u\in \mathcal{N}_\lambda^-
		\quad\text{and}\quad \sigma_u(t_{\max}^u)=\max_{t>0} \sigma_u(t)
	\end{align*}
	whenever $\lambda \in (0,\Lambda_2)$.
\end{lemma}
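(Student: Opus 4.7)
The plan is to translate $tu\in\mathcal{N}_\lambda$ into a one-variable equation for $\sigma_u$ and then recover $t_1^u,t_2^u$ as the two points where the graph of $\sigma_u$ meets the horizontal line at height $\lambda\int_\Omega|u|^{1-\gamma}\,\diff x$. By \eqref{sigma-4} this condition reads $\sigma_u(t)=\lambda\int_\Omega|u|^{1-\gamma}\,\diff x$, and the chain rule applied to the identity $\psi_{tu}(s)=\psi_u(ts)$, together with differentiation of \eqref{ksdp-6} at any such $t$, gives
\[
\psi_{tu}''(1)=t^2\psi_u''(t)=t^{2-\gamma}\sigma_u'(t).
\]
Hence, at every root $t$ of $\sigma_u(t)=\lambda\int_\Omega|u|^{1-\gamma}\,\diff x$, one has $tu\in\mathcal{N}_\lambda^+$ iff $\sigma_u'(t)>0$ and $tu\in\mathcal{N}_\lambda^-$ iff $\sigma_u'(t)<0$.

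I would first record the qualitative shape of $\sigma_u$. Clearly $\sigma_u(0^+)=0$. For small $t$, the positive part of \eqref{ksdp-6} dominates because its lowest exponent is $p-1+\gamma<r-1+\gamma$, so $\sigma_u(t)>0$ near $0$. For large $t$, upon expansion of $[a_0+b_0\phi_\mathcal{H}^{\theta-1}(t\nabla u)](t^{p-1+\gamma}\|\nabla u\|_p^p+t^{q-1+\gamma}\|\nabla u\|_{q,a}^q)$, every power of $t$ that arises is at most $q\theta-1+\gamma$, which is strictly less than $r-1+\gamma$ by (H)(ii); hence $\sigma_u(t)\to-\infty$. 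In particular $\sigma_u$ attains a strictly positive maximum on $(0,\infty)$.

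The technical heart of the argument is to show this maximum is attained at a unique point $t_{\max}^u$, equivalently that $\sigma_u'$ vanishes exactly once on $(0,\infty)$. Setting $\Phi(t):=\phi_\mathcal{H}(t\nabla u)=\tfrac{t^p}{p}\|\nabla u\|_p^p+\tfrac{t^q}{q}\|\nabla u\|_{q,a}^q$ and $P(t):=[a_0+b_0\Phi^{\theta-1}(t)]\bigl(t^{p-1+\gamma}\|\nabla u\|_p^p+t^{q-1+\gamma}\|\nabla u\|_{q,a}^q\bigr)$, I would rewrite $\sigma_u'(t)=0$ as
\[
\frac{P'(t)}{t^{r-2+\gamma}}=(r-1+\gamma)\int_\Omega|u|^r\,\diff x
\]
and show by a logarithmic-derivative calculation that the left-hand side is strictly decreasing on $(0,\infty)$, running from $+\infty$ down to $0$. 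The key input is the homogeneity estimate $p\Phi(t)\le t\Phi'(t)\le q\Phi(t)$ (immediate from $\Phi'(t)=t^{p-1}\|\nabla u\|_p^p+t^{q-1}\|\nabla u\|_{q,a}^q$), which together with $r>q\theta$ from (H)(ii) controls the growth of every summand in $P'$. The resulting unique root is the argmax $t_{\max}^u$, and $\sigma_u$ is strictly increasing on $(0,t_{\max}^u)$ and strictly decreasing on $(t_{\max}^u,\infty)$.

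Setting $\Lambda_2:=\sigma_u(t_{\max}^u)\bigl(\int_\Omega|u|^{1-\gamma}\,\diff x\bigr)^{-1}>0$, for any $\lambda\in(0,\Lambda_2)$ the intermediate value theorem produces unique $t_1^u\in(0,t_{\max}^u)$ and $t_2^u\in(t_{\max}^u,\infty)$ with $\sigma_u(t_i^u)=\lambda\int_\Omega|u|^{1-\gamma}\,\diff x$, hence $t_1^uu,\,t_2^uu\in\mathcal{N}_\lambda$ by \eqref{sigma-4}. Since $\sigma_u'(t_1^u)>0>\sigma_u'(t_2^u)$, the identity $\psi_{t_i^uu}''(1)=(t_i^u)^{2-\gamma}\sigma_u'(t_i^u)$ places $t_1^uu$ in $\mathcal{N}_\lambda^+$ and $t_2^uu$ in $\mathcal{N}_\lambda^-$. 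The main obstacle is the strict monotonicity of $P'(t)/t^{r-2+\gamma}$: for non-integer $\theta>1$ the factor $\Phi^{\theta-1}(t)$ is not polynomial, so monotonicity cannot be read off by merely summing monomials, and the sharp inequalities $p\Phi\le t\Phi'\le q\Phi$ together with the superlinearity $r>q\theta$ have to be exploited in a delicate way.
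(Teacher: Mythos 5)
Your overall route coincides with the paper's: reduce $tu\in\mathcal{N}_\lambda$ to the scalar equation $\sigma_u(t)=\lambda\int_\Omega|u|^{1-\gamma}\,\diff x$, show that $\sigma_u$ rises from $0$ to a unique positive maximum and then falls to $-\infty$, and classify the two crossing points by the sign of $\sigma_u'$ there (your identity $\psi_{tu}''(1)=t^{2-\gamma}\sigma_u'(t)$ at crossing points is equivalent to the paper's $\sigma_u'(t)=t^{\gamma}\psi_u''(t)+\gamma t^{\gamma-1}\psi_u'(t)$). The genuine gap is your choice $\Lambda_2:=\sigma_u(t_{\max}^u)\bigl(\int_\Omega|u|^{1-\gamma}\,\diff x\bigr)^{-1}$, which depends on $u$. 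The lemma is invoked later (Propositions \ref{mini:N+} and \ref{pro:prelim:sec-sol}) for the a priori unknown weak limits $u_\lambda$ and $v_\lambda$ \emph{after} $\lambda$ has been fixed, so one needs a single threshold valid for every $u\in\V\setminus\{0\}$ simultaneously; with your definition, $\inf_{u}\Lambda_2(u)$ could a priori be $0$ and the argument collapses. The entire quantitative half of the paper's proof exists precisely to remove this $u$-dependence: one first bounds $t_{\max}^u$ from below by $t_0^u=c\,\|\nabla u\|_p^{-1}$ by minorizing $\sigma_u'$ with the two-term expression $\tfrac{b_0}{p^{\theta-1}}(p-1+\gamma)t^{p\theta-2+\gamma}\|\nabla u\|_p^{p\theta}-(r-1+\gamma)t^{r-2+\gamma}\int_\Omega|u|^r\,\diff x$, and then estimates $\sigma_u(t_{\max}^u)\geq\sigma_u(t_0^u)\geq c'\|\nabla u\|_p^{1-\gamma}\geq\Lambda_2\int_\Omega|u|^{1-\gamma}\,\diff x$ via H\"older's inequality and the Sobolev constant \eqref{best-sobolev-constant}; the scaling $t_0^u\sim\|\nabla u\|_p^{-1}$ is exactly what makes the resulting constant independent of $u$. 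Your proposal omits this step entirely.

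A secondary issue is that you leave the strict monotonicity of $P'(t)/t^{r-2+\gamma}$ (the paper's $T_u$) as a declared ``main obstacle'' rather than establishing it. The ingredients you name are the right ones: each summand has the form $c\,\Phi^{\alpha}(t)\,t^{\beta}$ with $c>0$, and for $\alpha\geq 0$ its logarithmic derivative is bounded above by $(\alpha q+\beta)/t$ with $\alpha q+\beta\leq q\theta-r<0$ by \eqref{sigma-2}; only the summand carrying $\Phi^{\theta-2}$ with $1<\theta<2$ requires the opposite bound $t\Phi'\geq p\Phi$ and a slightly more careful joint treatment. The paper is admittedly terse here as well, but a proposal that ends with ``this has to be exploited in a delicate way'' has not closed the step, and together with the non-uniform $\Lambda_2$ the proposal does not yet prove the lemma.
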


\begin{proof}
	Let $u\in \V\setminus\{0\}$. The equation
	\begin{align*}
		0=\sigma_u'(t)
		&= \left[a_0 +b_0\phi_\mathcal{H}^{\theta-1}(t\nabla u)\right]\left[(p-1+\gamma)t^{p-2+\gamma}\|\nabla u\|_p^p+ (q-1+\gamma)t^{q-2+\gamma} \|\nabla u\|_{q,a}^q\right] \\
		&\qquad + b_0(\theta-1) \phi_\mathcal{H}^{\theta-2}(t\nabla u)\left(t^{p-1+\gamma}\|\nabla u\|_p^p+t^{q-1+\gamma} \|\nabla u\|_{q,a}^q\right)\left(t^{p-1}\|\nabla u\|_p^p+t^{q-1} \|\nabla u\|_{q,a}^q\right) \\
		&\qquad - (r-1+\gamma)t^{r-2+\gamma}\int_\Omega |u|^r\,\diff x
	\end{align*}
	is equivalent to
	\begin{align}\label{sigma-1}
		\begin{split}
			&\left[a_0 +b_0\phi_\mathcal{H}^{\theta-1}(t\nabla u)\right]\left[(p-1+\gamma)t^{p-r}\|\nabla u\|_p^p+ (q-1+\gamma)t^{q-r} \|\nabla u\|_{q,a}^q\right] \\
			&\quad + b_0(\theta-1) \phi_\mathcal{H}^{\theta-2}(t\nabla u)\left(t^{p-r+1}\|\nabla u\|_p^p+t^{q-r+1} \|\nabla u\|_{q,a}^q\right)\left(t^{p-1}\|\nabla u\|_p^p+t^{q-1} \|\nabla u\|_{q,a}^q\right) \\
			&= (r-1+\gamma)\int_\Omega |u|^r\,\diff x.
		\end{split}
	\end{align}
	Note that $r>q\theta$ and $\theta\geq 1$ imply
	\begin{align}\label{sigma-2}
		\begin{split}
			p(\theta-1)+p-r
			& < \min\left\{p(\theta-1)+q-r,q(\theta-1)+p-r\right\}\\
			&\leq \max\left\{p(\theta-1)+q-r,q(\theta-1)+p-r\right\}\\
			&<q(\theta-1)+q-r=q\theta-r<0.
		\end{split}	
	\end{align}
	Denoting the left-hand side of \eqref{sigma-1} as
	$$
	\begin{aligned}
			T_u(t)=&\left[a_0 +b_0\phi_\mathcal{H}^{\theta-1}(t\nabla u)\right]\left[(p-1+\gamma)t^{p-r}\|\nabla u\|_p^p+ (q-1+\gamma)t^{q-r} \|\nabla u\|_{q,a}^q\right] \\
			&+ b_0(\theta-1) \phi_\mathcal{H}^{\theta-2}(t\nabla u)\left(t^{p-r+1}\|\nabla u\|_p^p+t^{q-r+1} \|\nabla u\|_{q,a}^q\right)\left(t^{p-1}\|\nabla u\|_p^p+t^{q-1} \|\nabla u\|_{q,a}^q\right)
\end{aligned}
	$$
and using \eqref{sigma-2} as well as $0<\gamma<1<p<q<r$, we easily observe that
	\begin{enumerate}
		\item[\textnormal{(i)}]
			$\displaystyle\lim_{t\to0^+} T_u(t)=\infty$;
		\item[\textnormal{(ii)}]
			$\displaystyle\lim_{t\to\infty} T_u(t)=0$;
		\item[\textnormal{(iii)}]
			$T_u'(t)<0$ for all $t>0$.
	\end{enumerate}
	From \textnormal{(i)} and \textnormal{(ii)} along with the intermediate value theorem there exists $t_{\max}^u>0$ such that \eqref{sigma-1} holds. From \textnormal{(iii)} we see that $t_{\max}^u$ is unique due to the injectivity of $T_u$. Moreover, if we consider $\sigma'_u(t)>0$, then in place of \eqref{sigma-1} we get
$$T_u(t)> (r-1+\gamma)\int_\Omega |u|^r\,\diff x
$$
and since $T_u$ is strictly decreasing, this holds for all $t<t_{\max}^u$. The same can be said for $\sigma_u'(t)<0$ and $t>t_{\max}^u$. Therefore, $\sigma_u$ is injective in $(0,t_{\max}^u)$ and in $(t_{\max}^u,\infty)$. In addition,
	\begin{align*}
		\sigma_u(t_{\max}^u)=\max_{t>0} \sigma_u(t)
	\end{align*}
	with $t_{\max}^u>0$ being the global maximum of $\sigma_u$. Moreover, we have
	\begin{align*}
		\lim_{t\to 0^+} \sigma_u(t) = 0
		\quad\text{and}\quad 
		\lim_{t\to \infty} \sigma_u(t) = -\infty.
	\end{align*}
	
	Using again $p \phi_{\mathcal{H}}(\nabla u) \geq \|\nabla u\|_p^p$ we observe that
	\begin{align*}
		\sigma_u'(t)
		\geq \frac{b_0}{p^{\theta-1}}(p-1+\gamma)t^{p\theta-2+\gamma}\|\nabla u\|_p^{p\theta} - (r-1+\gamma)t^{r-2+\gamma}\int_\Omega |u|^r\,\diff x,
	\end{align*}
	which gives by applying H\"older's inequality and \eqref{best-sobolev-constant} that
	\begin{align}\label{sigma-3}
		t_{\max}^u \geq  \frac{1}{\|\nabla u\|_p}\left( \frac{b_0(p-1+\gamma)S^{\frac{r}{p}}}{p^{\theta-1}(r-1+\gamma)|\Omega|^{1-\frac{r}{p^*}}}\right)^{\frac{1}{r-p\theta}} := t_0^{u}.
	\end{align} 
	Since $\sigma_u$ is increasing on $(0,t_{\max}^u)$, we obtain from $p \phi_{\mathcal{H}}(\nabla u) \geq \|\nabla u\|_p^p$,  H\"older's inequality, \eqref{best-sobolev-constant} and the representation of  $t_0^u$ in \eqref{sigma-3} that
	\begin{align*}
		 \sigma_u(t_{\max}^u) &\geq \sigma_u(t_0^u) 
		 \geq \frac{b_0}{p^{\theta-1}} (t_0^u)^{p \theta -1+\gamma}\|\nabla u\|_p^{p\theta} -  (t_0^u)^{r-1+\gamma}\int_\Omega |u|^r\,\diff x\\
		 &\geq (t_0^u)^{p \theta -1+\gamma} \|\nabla u\|_p^{p\theta} \left(\frac{b_0}{p^{\theta-1}}- (t_0^u)^{r-p\theta} S^{-\frac{r}{p}}|\Omega|^{1-\frac{r}{p^*}} \|\nabla u\|^{r-p\theta}_p\right)\\
		 & \geq \left(\frac{r-p}{r-1+\gamma}\right)\frac{b_0}{p^{\theta-1}} (t_0^u)^{p \theta -1+\gamma} \|\nabla u\|_p^{p\theta}\\
		 &\geq \left(\frac{r-p}{r-1+\gamma}\right) \|\nabla u\|^{1-\gamma}_p\frac{b_0}{p^{\theta-1}}\left(\frac{b_0(p-1+\gamma)S^{\frac{r}{p}}}{p^{\theta-1}(r-1+\gamma)|\Omega|^{1-\frac{r}{p^*}}}\right)^{\frac{p\theta-1+\gamma}{r-p\theta}}\\
		 & \geq \Lambda_2 \int_\Omega |u|^{1-\gamma}\,\diff x,
	\end{align*}
	where
	\begin{align*}
		\Lambda_2=\frac{b_0}{p^{\theta-1}} \left(\frac{r-p}{r-1+\gamma}\right)\left(\frac{b_0(p-1+\gamma)S^{\frac{r}{p}}}{p^{\theta-1}(r-1+\gamma)|\Omega|^{1-\frac{r}{p^*}}}\right)^{\frac{p\theta-1+\gamma}{r-p\theta}} \frac{S^{\frac{1-\gamma}{p}}}{|\Omega|^{\frac{p^*+\gamma-1}{p^*}}}.
	\end{align*}
	From the considerations above, we see that
	\begin{align*}
		\sigma_u(t_{\max}^u) > \lambda \int_\Omega |u|^{1-\gamma}\,\diff x
	\end{align*}
	whenever $\lambda \in (0,\Lambda_2)$.
	
	Recall that $\sigma_u$ is injective in $(0,t_{\max}^u)$ and in $(t_{\max}^u,\infty)$. Hence, we find unique $t_1^u, t_2^u>0$ such that
	\begin{align*}
		\sigma_u(t_1^u) = \lambda \int_\Omega |u|^{1-\gamma}\,\diff x = \sigma_u(t_2^u)
		\quad\text{with}\quad \sigma_u'(t_2^u)<0<\sigma_u'(t_1^u).
	\end{align*}
	In addition, we have $t_1^u u$, $t_2^u u \in \mathcal{N}_\lambda$, see \eqref{sigma-4}. Using the representation in \eqref{ksdp-6} we observe that
	\begin{align*}
		\sigma_u'(t) = t^\gamma \psi_u''(t)+\gamma t^{\gamma-1}\psi_u'(t).
	\end{align*}
	Since $\psi_u'(t_1^u)=\psi_u'(t_2^u)=0$ and $\sigma_u'(t_2^u)<0<\sigma_u'(t_1^u)$, we get
	\begin{align*}
		0<\sigma_u'(t_1^u)= (t_1^u)^\gamma \psi_u''(t_1^u)
		\quad\text{and}\quad 
		0>\sigma_u'(t_2^u)= (t_2^u)^\gamma \psi_u''(t_2^u).
	\end{align*}
	Hence, $t_1^u u\in \mathcal{N}_\lambda^+$ and $t_2^u u\in \mathcal{N}_\lambda^-$ which completes the proof.
\end{proof}

Next we prove lower and upper bounds for the modular $\varrho_\mathcal{H}(\nabla \cdot)$ for the elements of $\mathcal{N}_\lambda^+$ and $\mathcal{N}_\lambda^-$, respectively.

\begin{proposition}\label{gap-struc}
	Let hypotheses \textnormal{(H)} be satisfied and let $\lambda >0$.  Then there exist constants $D_1=D_1(\lambda)>0$ and $D_2>0$ such that
	\begin{align*}
		\|\nabla u\|_p^p + \|\nabla u\|_{q,a}^q<D_1
		\quad\text{and}\quad  
		\|\nabla v\|_p^p >D_2
	\end{align*} 
	for every $u\in \mathcal{N}_\lambda^+$ and for every $v\in \mathcal{N}_\lambda^-$.
\end{proposition}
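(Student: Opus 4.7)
The plan is to derive both bounds from a single Nehari-type identity obtained by combining the constraint $\psi_u'(1)=0$ with the sign information on $\psi_u''(1)$. First, I exploit the fact that on $\mathcal{N}_\lambda$ the differential identity $\sigma_u'(t)=t^{\gamma}\psi_u''(t)+\gamma t^{\gamma-1}\psi_u'(t)$ from the proof of Lemma~\ref{ksdp-lem2} reduces at $t=1$ to $\sigma_u'(1)=\psi_u''(1)$, so $\mathcal{N}_\lambda^{+}$ (resp.\ $\mathcal{N}_\lambda^{-}$) is characterized by $\sigma_u'(1)>0$ (resp.\ $\sigma_v'(1)<0$). Setting $Z:=\|\nabla u\|_p^p+\|\nabla u\|_{q,a}^q$ and using $\psi_u'(1)=0$ to eliminate $\int_\Omega|u|^r\,\diff x$ from the explicit formula for $\sigma_u'(1)$, one arrives at the analogue of \eqref{ksdp-14}:
\begin{align*}
&[a_0+b_0\phi_\mathcal{H}^{\theta-1}(\nabla u)]\bigl[(r-p)\|\nabla u\|_p^p+(r-q)\|\nabla u\|_{q,a}^q\bigr]\\
&\qquad-b_0(\theta-1)\phi_\mathcal{H}^{\theta-2}(\nabla u)Z^2=(r-1+\gamma)\lambda\int_\Omega|u|^{1-\gamma}\,\diff x-\sigma_u'(1).
\end{align*}

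For $u\in\mathcal{N}_\lambda^+$ the correction $-\sigma_u'(1)$ is negative, so the left-hand side is strictly less than $(r-1+\gamma)\lambda\int_\Omega|u|^{1-\gamma}\,\diff x$. To bound the left-hand side from below, I would first use $(r-p)\|\nabla u\|_p^p+(r-q)\|\nabla u\|_{q,a}^q\geq(r-q)Z$, drop the nonnegative $a_0$-contribution, and factor to obtain $b_0\phi_\mathcal{H}^{\theta-2}(\nabla u)Z\bigl[(r-q)\phi_\mathcal{H}(\nabla u)-(\theta-1)Z\bigr]$. The modular inequality $\phi_\mathcal{H}(\nabla u)\geq Z/q$ together with the hypothesis $r>q\theta$ yields $(r-q)\phi_\mathcal{H}(\nabla u)-(\theta-1)Z\geq (r/q-\theta)Z>0$, and a short case distinction on whether $\theta\geq 2$ (use again $\phi_\mathcal{H}(\nabla u)\geq Z/q$) or $1\leq\theta<2$ (use instead $\phi_\mathcal{H}(\nabla u)\leq Z/p$) delivers $\phi_\mathcal{H}^{\theta-2}(\nabla u)Z^2\geq cZ^\theta$ for some $c>0$; hence the left-hand side is at least $C_1Z^\theta$. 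On the right-hand side, H\"older's inequality combined with Proposition~\ref{proposition_embeddings}\textnormal{(ii)} and \eqref{best-sobolev-constant} gives $\int_\Omega|u|^{1-\gamma}\,\diff x\leq C_2Z^{(1-\gamma)/p}$. Since $\theta\geq 1>1/p>(1-\gamma)/p$, the exponent $\theta-(1-\gamma)/p$ is positive, and comparing the two estimates yields $Z<D_1(\lambda)$.

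For $v\in\mathcal{N}_\lambda^-$ I would argue directly from $\sigma_v'(1)<0$ without invoking the identity above. Since $\theta\geq 1$, every term in the explicit expression of $\sigma_v'(1)$ other than $-(r-1+\gamma)\int_\Omega|v|^r\,\diff x$ is nonnegative, so keeping only the contribution $b_0(p-1+\gamma)\phi_\mathcal{H}^{\theta-1}(\nabla v)\|\nabla v\|_p^p$ and using $\phi_\mathcal{H}(\nabla v)\geq\|\nabla v\|_p^p/p$ yields
\[
\frac{b_0(p-1+\gamma)}{p^{\theta-1}}\|\nabla v\|_p^{p\theta}<(r-1+\gamma)\int_\Omega|v|^r\,\diff x\leq(r-1+\gamma)S^{-r/p}|\Omega|^{1-r/p^*}\|\nabla v\|_p^r.
\]
Since $r>q\theta>p\theta$, solving for $\|\nabla v\|_p^p$ produces the desired $\lambda$-independent positive lower bound $D_2$.

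The main obstacle I foresee is the sharp estimate $\phi_\mathcal{H}^{\theta-2}(\nabla u)Z^2\geq cZ^\theta$ in the $\mathcal{N}_\lambda^+$ argument: because this term enters the working identity with a negative sign, its control requires tracking both sides of the modular sandwich $Z/q\leq\phi_\mathcal{H}(\nabla u)\leq Z/p$ and splitting on the sign of $\theta-2$. Everything else reduces to routine bookkeeping with the embeddings from Proposition~\ref{proposition_embeddings} and the Sobolev constant in \eqref{best-sobolev-constant}.
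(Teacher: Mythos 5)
Your argument is correct, and its skeleton is the same as the paper's: combine $\psi_u'(1)=0$ with the sign of $\psi_u''(1)$ to eliminate $\int_\Omega|u|^r\,\diff x$, then play the resulting inequality against the H\"older--Sobolev bound $\int_\Omega|u|^{1-\gamma}\,\diff x\leq C\|\nabla u\|_p^{1-\gamma}$ (for $\mathcal{N}_\lambda^+$) or $\int_\Omega|v|^r\,\diff x\leq C\|\nabla v\|_p^r$ (for $\mathcal{N}_\lambda^-$). Your $\mathcal{N}_\lambda^-$ part is essentially verbatim the paper's. Where you genuinely diverge is in extracting the upper bound on $\mathcal{N}_\lambda^+$: the paper controls the troublesome term $b_0(\theta-1)\phi_\mathcal{H}^{\theta-2}(\nabla u)Z^2$ by the absorption $\phi_\mathcal{H}^{\theta-2}(\nabla u)Z^2\leq q\,\phi_\mathcal{H}^{\theta-1}(\nabla u)Z$, which turns the coefficients into $r-p-q(\theta-1)$ and $r-q\theta$ (both positive since $r>q\theta$), and then bounds the two seminorms \emph{sequentially} --- first $\|\nabla u\|_p^p<A_1$ by discarding the $\|\nabla u\|_{q,a}^q$-term, then $\|\nabla u\|_{q,a}^q<A_2$ by feeding $A_1$ back into the right-hand side, so $D_1=A_1+A_2$. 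You instead lower-bound the whole left side at once via the factorization $b_0\phi_\mathcal{H}^{\theta-2}(\nabla u)Z\bigl[(r-q)\phi_\mathcal{H}(\nabla u)-(\theta-1)Z\bigr]\geq C_1Z^{\theta}$, using both sides of the sandwich $Z/q\leq\phi_\mathcal{H}(\nabla u)\leq Z/p$ and the case split on $\theta\lessgtr 2$; this bounds $Z=\|\nabla u\|_p^p+\|\nabla u\|_{q,a}^q$ in a single step. Both routes are valid (your case split is correct, and $\theta-(1-\gamma)/p>0$ holds since $\theta\geq1$), so the difference is purely one of bookkeeping: your version is more symmetric and avoids the nested constant $A_1^{(1-\gamma)/p}$ in $A_2$, while the paper's version yields separate explicit bounds on the two seminorms, which is occasionally convenient elsewhere.
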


\begin{proof}
	Let $u\in \mathcal{N}_\lambda^+$. From $\psi_u'(1)=0$ and $\psi_u''(1)>0$ we get 
	\begin{align*}
		&(r-1)[a_0+b_0 \phi_\mathcal{H}^{\theta-1}(\nabla u)](\|\nabla u\|_p^p + \|\nabla u\|_{q,a}^q)-\lambda(r-1)\int_\Omega |u|^{1-\gamma}\,\diff x\\
		&< [a_0+b_0\phi_\mathcal{H}^{\theta-1}(\nabla u)]((p-1)\|\nabla u\|_p^p + (q-1)\|\nabla u\|_{q,a}^q)\\
		& \quad + b_0(\theta-1)\phi_\mathcal{H}^{\theta-2}(\nabla u)(\|\nabla u\|_p^p + \|\nabla u\|_{q,a}^q)^2 + \lambda \gamma \int_\Omega |u|^{1-\gamma}\,\diff x.
	\end{align*}
	Using $\phi_\mathcal{H}(\nabla u)\geq \frac{1}{q}(\|\nabla u\|_p^p + \|\nabla u\|_{q,a}^q)$ in the inequality above along with H\"older's inequality and \eqref{best-sobolev-constant} it follows 
	\begin{align}\label{gap-struc-1}
		\begin{split}
			&a_0 ((r-p)\|\nabla u\|_p^p + (r-q)\|\nabla u\|_{q,a}^q)\\
			&\quad+b_0\phi_\mathcal{H}^{\theta-1}(\nabla u)((r-p-q(\theta-1))\|\nabla u\|_p^p + (r-q\theta)\|\nabla u\|_{q,a}^q)\\
			& < \lambda (r-1+\gamma) \int_\Omega |u|^{1-\gamma}\,\diff x \leq \lambda (r-1+\gamma) |\Omega|^{1-\frac{1-\gamma}{p^*}} S^{-\frac{1-\gamma}{p}}\|\nabla u\|_p^{1-\gamma}.
		\end{split}
	\end{align}
	Since $r>\theta q$ we have $r-p-q(\theta-1) \geq r-q-q(\theta-1)>0$. Hence, we obtain from \eqref{gap-struc-1}
	\begin{align*}
		\lambda (r-1+\gamma)|\Omega|^{1-\frac{1-\gamma}{p^*}} S^{-\frac{1-\gamma}{p}} 
		> \frac{b_0}{p^{\theta-1}}(r-p-q(\theta-1))\|\nabla u\|_p^{p\theta-1+\gamma},
	\end{align*}
	which gives
	\begin{align}\label{gap-struc-n}
		\|\nabla u\|_p^p < A_1:= 
		\left(\frac{\lambda p^{\theta-1} (r-1+\gamma)|\Omega|^{1-\frac{1-\gamma}{p^*}} S^{-\frac{1-\gamma}{p}}}{b_0(r-p-q(\theta-1))}\right)^{\frac{p}{p\theta-1+\gamma}}.
	\end{align}
	Putting \eqref{gap-struc-n} in \eqref{gap-struc-1}, we get 
	\begin{align*}
		\lambda (r-1+\gamma)|\Omega|^{1-\frac{1-\gamma}{p^*}} S^{-\frac{1-\gamma}{p}}A_1^{\frac{1-\gamma}{p}} 
		>\frac{b_0}{q^{\theta-1}}(r-q\theta)\|\nabla u\|_{q,a}^{q\theta},
	\end{align*}
	which results in
	\begin{align}\label{gap-struc-2}
		\|\nabla u\|_{q,a}^q  < A_2:= 
		\left(\frac{\lambda q^{\theta-1} (r-1+\gamma)|\Omega|^{1-\frac{1-\gamma}{p^*}} S^{-\frac{1-\gamma}{p}}A_1^{\frac{1-\gamma}{p}}}{b_0(r-q\theta)}\right)^\frac{1}{\theta}.
	\end{align}
	From \eqref{gap-struc-n} and \eqref{gap-struc-2} we conclude that
	\begin{align*}
		\|\nabla u\|_p^p +\|\nabla u\|_{q,a}^q < A_1+A_2=:D_1.
	\end{align*}

	Next let us we fix $v\in \mathcal{N}_\lambda^-$. Then $\psi_v'(1)=0$ and $\psi_v''(1)<0$ gives us
	\begin{align*}
		&[a_0+b_0\phi_\mathcal{H}^{\theta-1}(\nabla v)]((p-1)\|\nabla v\|_p^p + (q-1)\|\nabla v\|_{q,a}^q)+\gamma [a_0+b_0\phi_\mathcal{H}^{\theta-1}(\nabla v)](\|\nabla v\|_p^p + \|\nabla v\|_{q,a}^q)\\
		&\quad + b_0(\theta-1)\phi_\mathcal{H}^{\theta-2}(\nabla v)(\|\nabla v\|_p^p + \|\nabla v\|_{q,a}^q)^2\\
		&<  (r-1+\gamma) \int_\Omega |v|^r\,\diff x\leq  (r-1+\gamma) |\Omega|^{1-\frac{r}{p^*}} S^{-\frac{r}{p}}\|\nabla v\|^r_p.
	\end{align*}
	Therefore, we get 
	\begin{align*}
		\|\nabla v\|_p^p > D_2:=
		\left(\frac{b_0(p-1)}{p^{\theta-1}(r-1+\gamma) |\Omega|^{1-\frac{r}{p^*}} S^{-\frac{r}{p}}}\right)^{\frac{p}{r-p\theta}}.
	\end{align*}
	Thus, the proof is finished.
\end{proof}

\section{Existence of solutions for problem $(P_\lambda)$}\label{sec_4}

In this section we use the results from Section \ref{sec_3} in order to prove Theorem \ref{main_result}. To this end, we first define
\begin{align*}
	\Theta_\lambda ^+ = \inf_{u\in\mathcal{N}_\lambda^+}J_\lambda(u).
\end{align*}

The next proposition shows that this minimum is achieved and it has negative energy.

\begin{proposition}\label{mini:N+}
	Let hypotheses \textnormal{(H)} be satisfied and let $\lambda \in (0,\min\{\Lambda_1,\Lambda_2\})$, with $\Lambda_1$, $\Lambda_2$ given in Lemmas \ref{ksdp-lem3-} and \ref{ksdp-lem2}. Then $\Theta_\lambda ^+<0$ and there exists $u_\lambda \in \mathcal{N}_\lambda^+$ such that $J_\lambda(u_\lambda ) = \Theta_\lambda ^+<0$ with $u_\lambda\geq 0$ a.\,e.\,in $\Omega$.
\end{proposition}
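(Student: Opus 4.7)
I follow the standard Nehari--fibering pattern: verify that the infimum is negative, extract a (nonnegative) weakly convergent minimizing sequence with nontrivial limit, upgrade to strong convergence, and identify the limit as an element of $\mathcal{N}_\lambda^+$ achieving $\Theta_\lambda^+$. For the negativity, fix any $u\in\V\setminus\{0\}$ with $u\geq 0$. Since $\lambda<\Lambda_2$, Lemma \ref{ksdp-lem2} produces $t_1^u\in(0,t_{\max}^u)$ with $t_1^u u\in\mathcal{N}_\lambda^+$; from the representation $\psi_u'(t)=t^{-\gamma}(\sigma_u(t)-\lambda\int_\Omega|u|^{1-\gamma}\,\diff x)$ and the monotonicity of $\sigma_u$ on $(0,t_{\max}^u)$ (again Lemma \ref{ksdp-lem2}), $\psi_u$ is strictly decreasing on $(0,t_1^u)$ starting from $\psi_u(0)=0$, so $\Theta_\lambda^+\leq J_\lambda(t_1^u u)=\psi_u(t_1^u)<0$.

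\textbf{Minimizing sequence and weak limit.} Pick $\{u_n\}\subset\mathcal{N}_\lambda^+$ with $J_\lambda(u_n)\to\Theta_\lambda^+$. Because the functional $J_\lambda$ and the defining relations of $\mathcal{N}_\lambda^+$ depend only on $|u|$ and $|\nabla u|$, I may replace $u_n$ by $|u_n|$ and assume $u_n\geq 0$. By Lemma \ref{ksdp-lem1} the sequence is bounded in $\V$, so along a subsequence $u_n\rightharpoonup u_\lambda$ in $\V$, while Proposition \ref{proposition_embeddings}(ii) yields $u_n\to u_\lambda$ in $L^r(\Omega)$ and a.e.\ in $\Omega$; hence $u_\lambda\geq 0$ and, via dominated convergence with the bound $|u_n|^{1-\gamma}\leq 1+|u_n|^r$, one gets $\int_\Omega|u_n|^{1-\gamma}\,\diff x\to\int_\Omega|u_\lambda|^{1-\gamma}\,\diff x$. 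If $u_\lambda\equiv 0$, both Lebesgue terms in $J_\lambda(u_n)$ vanish in the limit and $\liminf_n J_\lambda(u_n)\geq 0>\Theta_\lambda^+$, a contradiction. Therefore $u_\lambda\not\equiv 0$.

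\textbf{Strong convergence and identification of the minimizer.} I claim $u_n\to u_\lambda$ strongly in $\V$. Assume not; then the weak lower semicontinuity of the convex gradient functionals appearing in $J_\lambda$ and in $\sigma_u(1)$ is strict, giving $\psi_{u_\lambda}'(1)<0$ and $J_\lambda(u_\lambda)<\liminf_n J_\lambda(u_n)=\Theta_\lambda^+$. Apply Lemma \ref{ksdp-lem2} to $u_\lambda$ to obtain $t^*=t_1^{u_\lambda}$ with $t^*u_\lambda\in\mathcal{N}_\lambda^+$. Combining the gap bound $\|\nabla u_\lambda\|_p^p+\|\nabla u_\lambda\|_{q,a}^q\leq D_1(\lambda)$ inherited from Proposition \ref{gap-struc} by wlsc with the lower estimate \eqref{sigma-3} forces $1<t_{\max}^{u_\lambda}<t_2^{u_\lambda}$, so the sign $\psi_{u_\lambda}'(1)<0$ can only place $1$ in $(0,t^*)$; strict monotonicity of $\psi_{u_\lambda}$ on $(0,t^*]$ then yields
\begin{align*}
J_\lambda(t^*u_\lambda)=\psi_{u_\lambda}(t^*)\leq\psi_{u_\lambda}(1)=J_\lambda(u_\lambda)<\Theta_\lambda^+,
\end{align*}
contradicting $t^*u_\lambda\in\mathcal{N}_\lambda^+$. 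Hence $u_n\to u_\lambda$ in $\V$, and passing to the limit in $\psi_{u_n}'(1)=0$ and $\psi_{u_n}''(1)>0$ gives $\psi_{u_\lambda}'(1)=0$ and $\psi_{u_\lambda}''(1)\geq 0$. Since $\lambda<\Lambda_1$, Lemma \ref{ksdp-lem3-} forces $\psi_{u_\lambda}''(1)\neq 0$, whence $u_\lambda\in\mathcal{N}_\lambda^+$ and $J_\lambda(u_\lambda)=\lim_n J_\lambda(u_n)=\Theta_\lambda^+$.

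\textbf{Main obstacle.} The delicate step is the exclusion of the alternative $1\geq t_2^{u_\lambda}$ in the fiber comparison above, which would place $t=1$ on the second decreasing branch of $\psi_{u_\lambda}$ and invalidate the monotonicity argument. Ruling it out relies on the precise interplay of Proposition \ref{gap-struc} with the explicit estimate \eqref{sigma-3} for $t_{\max}^{u_\lambda}$, and may require further shrinking of $\min\{\Lambda_1,\Lambda_2\}$ so that $1<t_{\max}^{u_\lambda}$ holds for all admissible $\lambda$ and all possible weak limits.
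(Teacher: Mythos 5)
Your proof of $\Theta_\lambda^+<0$ is correct and in fact more economical than the paper's: you only bound the infimum by the value at the single point $t_1^u u$ produced by Lemma \ref{ksdp-lem2}, using that $\psi_u$ decreases from $\psi_u(0)=0$ on $(0,t_1^u)$, whereas the paper proves the stronger statement that $J_\lambda(u)<0$ for \emph{every} $u\in\mathcal{N}_\lambda^+$ by an explicit algebraic computation combining $\psi_u'(1)=0$ with $\psi_u''(1)>0$. For the statement as given your weaker conclusion suffices. The extraction of the minimizing sequence, the nontriviality of the weak limit, and the final identification $u_\lambda\in\mathcal{N}_\lambda^+$ via $\mathcal{N}_\lambda^\circ=\emptyset$ all match the paper.

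The genuine gap is exactly where you flag it. In the strong-convergence step you need $t=1$ to lie on the first decreasing branch of $\psi_{u_\lambda}$, i.e. $1<t_1^{u_\lambda}$, but the sign information $\psi_{u_\lambda}'(1)<0$ is equally consistent with $1>t_2^{u_\lambda}$, and your proposed remedy (combining the bound of Proposition \ref{gap-struc} with the lower estimate \eqref{sigma-3}) only closes this case after shrinking the admissible range of $\lambda$ strictly below $\min\{\Lambda_1,\Lambda_2\}$ --- which changes the statement being proved, since $\Lambda_1,\Lambda_2$ were not chosen to guarantee $t_{\max}^{u_\lambda}>1$. The paper resolves the dichotomy without any further smallness condition by a different comparison: it evaluates $\psi_{u_n}'$ (not $\psi_{u_\lambda}'$) at the point $t_1^{u_\lambda}$ and shows $\liminf_n\psi_{u_n}'(t_1^{u_\lambda})>\psi_{u_\lambda}'(t_1^{u_\lambda})=0$. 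Since $u_n\in\mathcal{N}_\lambda^+$ forces $1=t_1^{u_n}$, the representation \eqref{ksdp-6} gives $\psi_{u_n}'<0$ on all of $(0,1)$, so a point where $\psi_{u_n}'>0$ cannot lie in $(0,1]$ and $t_1^{u_\lambda}>1$ follows directly; after that the monotonicity argument you use goes through verbatim. A secondary issue: the negation of strong convergence only yields strictness of one of the two liminf inequalities (for $\|\nabla\cdot\|_p^p$ or $\|\nabla\cdot\|_{q,a}^q$) along a subsequence, not both at once; the paper runs the contradiction separately for each and then invokes uniform convexity of the integrand to pass from $\varrho_{\mathcal{H}}(\nabla u_n)\to\varrho_{\mathcal{H}}(\nabla u_\lambda)$ to convergence in $\V$, and you should do the same rather than asserting strictness of a single weak lower semicontinuity inequality.
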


\begin{proof}
	Let $u\in \mathcal{N}_\lambda^+$. Then we have 
	\begin{align}\label{mini:N+:1}
		-\left[a_0 +b_0\phi_\mathcal{H}^{\theta-1}(\nabla u)\right] \left(\|\nabla u\|_p^p+  \|\nabla u\|_{q,a}^q\right)+\int_\Omega |u|^r\,\diff x=-\lambda \int_\Omega |u|^{1-\gamma}\,\diff x
	\end{align}
	and
	\begin{align} \label{mini:N+:2}
		\begin{split}
			& \left[a_0 +b_0\phi_\mathcal{H}^{\theta-1}(\nabla u)\right] \left[(p-1)\|\nabla u\|_p^p+(q-1) \|\nabla u\|_{q,a}^q\right]\\
			&\quad+ b_0(\theta -1)\phi_\mathcal{H}^{\theta-2}(\nabla u)\left(\|\nabla u\|_p^p+ \|\nabla u\|_{q,a}^q\right)^2  + \lambda \gamma \int_\Omega |u|^{1-\gamma}\,\diff x >(r-1) \int_\Omega |u|^r\,\diff x.
		\end{split}
	\end{align}
	Combining \eqref{mini:N+:1} multiplied with $-\gamma$ and \eqref{mini:N+:2} yields
	\begin{align}\label{mini:N+:6}
		\begin{split}
			(r-1+\gamma) \int_\Omega |u|^r\,\diff x 
			&< \left[a_0 +b_0\phi_\mathcal{H}^{\theta-1}(\nabla u)\right] \left[(p-1+\gamma)\|\nabla u\|_p^p+(q-1+\gamma) \|\nabla u\|_{q,a}^q\right]\\
			&\quad+ b_0(\theta -1)\phi_\mathcal{H}^{\theta-2}(\nabla u)\left(\|\nabla u\|_p^p+ \|\nabla u\|_{q,a}^q\right)^2.
		\end{split}
	\end{align}
	Using \eqref{mini:N+:1} and \eqref{mini:N+:6} we obtain
	\begin{align*}
		J_\lambda(u) &= \left[a_0\phi_\mathcal{H}(\nabla u) +\frac{b_0}{\theta}\phi_\mathcal{H}^{\theta}(\nabla u)\right]- \frac{\lambda}{1-\gamma}\int_\Omega |u|^{1-\gamma}\,\diff x - \frac{1}{r}\int_\Omega |u|^r\,\diff x\\
		& = \left[a_0\phi_\mathcal{H}(\nabla u) +\frac{b_0}{\theta}\phi_\mathcal{H}^{\theta}(\nabla u)\right]-\frac{1}{1-\gamma}\left[a_0 +b_0\phi_\mathcal{H}^{\theta-1}(\nabla u)\right] \left(\|\nabla u\|_p^p+  \|\nabla u\|_{q,a}^q\right)\\
		&\quad + \frac{r-1+\gamma}{r(1-\gamma)}\int_\Omega |u|^r\,\diff x\\
		&< \left[a_0\phi_\mathcal{H}(\nabla u) +\frac{b_0}{\theta}\phi_\mathcal{H}^{\theta}(\nabla u)\right]+\left[a_0 +b_0\phi_\mathcal{H}^{\theta-1}(\nabla u)\right]\left[\left(\frac{p-1+\gamma}{r(1-\gamma)}-\frac{1}{1-\gamma}\right)\|\nabla u\|_p^p\right.\\
		&\quad \left. +\left(\frac{q-1+\gamma}{r(1-\gamma)}-\frac{1}{1-\gamma}\right)\|\nabla u\|_{q,a}^q\right] +b_0\frac{(\theta -1)}{r(1-\gamma)}\phi_\mathcal{H}^{\theta-2}(\nabla u)\left(\|\nabla u\|_p^p+ \|\nabla u\|_{q,a}^q\right)^2\\
		& = a_0 \left[\left(\frac{1}{p}+\frac{p-1+\gamma-r}{r(1-\gamma)}\right)\|\nabla u\|_p^p+ \left(\frac{1}{q}+\frac{q-1+\gamma-r}{r(1-\gamma)}\right)\|\nabla u\|_{q,a}^q\right]\\
		&\quad +b_0 \left[\frac{\phi_\mathcal{H}^\theta(\nabla u)}{\theta}+\frac{\phi_\mathcal{H}^{\theta-1}(\nabla u)(p-1+\gamma-r)}{r(1-\gamma)}\|\nabla u\|_p^p+\frac{\phi_\mathcal{H}^{\theta-1}(\nabla u)(q-1+\gamma-r)}{r(1-\gamma)}\|\nabla u\|_{q,a}^q\right.\\
		&\qquad\quad\quad \left.+\frac{(\theta -1)}{r(1-\gamma)}\phi_\mathcal{H}^{\theta-2}(\nabla u)\left(\|\nabla u\|_p^p+ \|\nabla u\|_{q,a}^q\right)^2\right]\\
		& =a_0B_1+b_0B_2
	\end{align*}
	with
	\begin{align*}
		B_1&=\left(\frac{1}{p}+\frac{p-1+\gamma-r}{r(1-\gamma)}\right)\|\nabla u\|_p^p+ \left(\frac{1}{q}+\frac{q-1+\gamma-r}{r(1-\gamma)}\right)\|\nabla u\|_{q,a}^q,\\
		B_2&=\frac{\phi_\mathcal{H}^\theta(\nabla u)}{\theta}+\frac{\phi_\mathcal{H}^{\theta-1}(\nabla u)(p-1+\gamma-r)}{r(1-\gamma)}\|\nabla u\|_p^p+\frac{\phi_\mathcal{H}^{\theta-1}(\nabla u)(q-1+\gamma-r)}{r(1-\gamma)}\|\nabla u\|_{q,a}^q\\
		&\quad +\frac{(\theta -1)}{r(1-\gamma)}\phi_\mathcal{H}^{\theta-2}(\nabla u)\left(\|\nabla u\|_p^p+ \|\nabla u\|_{q,a}^q\right)^2.
	\end{align*}
	From the assumptions \textnormal{(H)} we have
	\begin{align*}
		\frac{1}{p}+\frac{p-1+\gamma-r}{r(1-\gamma)}&= \frac{(p-r)(p-1+\gamma)}{pr(1-\gamma)}<0,\\
		\frac{1}{q}+\frac{q-1+\gamma-r}{r(1-\gamma)}&= \frac{(q-r)(q-1+\gamma)}{qr(1-\gamma)}\leq 0.
	\end{align*}
	Therefore, $B_1<0$.
	
	Let us consider $B_2$. Using
	\begin{align*}
		\phi_\mathcal{H}^{\theta-2}(\nabla u)\left(\|\nabla u\|_p^p+ \|\nabla u\|_{q,a}^q\right)^2\leq q\phi_\mathcal{H}^{\theta-1}(\nabla u)\left(\|\nabla u\|_p^p+ \|\nabla u\|_{q,a}^q\right),
	\end{align*}
	we get
	\begin{align*}
		B_2 
		&\leq \frac{\phi_\mathcal{H}^\theta(\nabla u)}{\theta}+\phi_\mathcal{H}^{\theta-1}(\nabla u)\frac{(p-1+\gamma-r)+q(\theta-1)}{r(1-\gamma)}\|\nabla u\|_p^p\\
		&\quad +\phi_\mathcal{H}^{\theta-1}(\nabla u)\frac{(q-1+\gamma-r)+q(\theta-1)}{r(1-\gamma)}\|\nabla u\|_{q,a}^q\\
		& =\phi_\mathcal{H}^{\theta-1}(\nabla u)\left [ \left(\frac{1}{p\theta}+ \frac{(p-1+\gamma-r)+q(\theta-1)}{r(1-\gamma)}\right)\|\nabla u\|_p^p  \right.\\
		&\left.\quad +\left(\frac{1}{q\theta}+ \frac{(q\theta-1+\gamma-r)}{r(1-\gamma)}\right)\|\nabla u\|_{q,a}^q\right].
	\end{align*}
	Now, since $\theta\geq 1$ and $\theta q<r$, we obtain 
	\begin{align*}   
		\frac{1}{p\theta}+ \frac{(p-1+\gamma-r)+q(\theta-1)}{r(1-\gamma)}  
		&= \frac{r(1-\gamma)+p\theta(p-1+\gamma -r)+p\theta q(\theta-1)-rp+rp}{p\theta r(1-\gamma)}\\
		& =\frac{(p\theta-r)(p-1+\gamma)+p(\theta-1)(\theta q-r)}{p\theta r(1-\gamma)}<0,
	\end{align*}
	because $\theta p<\theta q<r$. Similarly we have
	\begin{align*}
		\frac{1}{q\theta}+ \frac{(q\theta-1+\gamma-r)}{r(1-\gamma)} 
		&= \frac{r(1-\gamma)+q\theta(q\theta-1+\gamma-r)-q\theta r+q \theta r}{q\theta r(1-\gamma)}\\
		& =\frac{(q\theta-r)(q\theta-1+\gamma)}{q\theta r(1-\gamma)}<0.
	\end{align*}
	From the considerations above it follows that $B_2<0$. Hence $J_\lambda(u)<0$ which implies that
	\begin{align*}
		\Theta_\lambda ^+ \leq J_\lambda(u)<0.
	\end{align*}

	Let us now prove the second part of the proposition. To this end, let $\{u_n\}_{n\in\N}\subset \mathcal{N}_\lambda^+$ be a minimizing sequence, that is, 
	\begin{align*}
		J_\lambda(u_n)\searrow \Theta_\lambda^+<0\quad\text{as }n\to\infty.
	\end{align*}
	By Lemma \ref{ksdp-lem1}, we know that $\{u_n\}_{n\in\N}$ is bounded in $\V$. Hence, by Proposition \ref{proposition_embeddings}\textnormal{(ii)} along with the reflexivity of $\V$, there exist a subsequence still denoted by $\{u_n\}_{n\in\N}$ and $u_\lambda\in\V$ such that
	\begin{align}\label{mini:N+:4}
		u_n\rightharpoonup u_\lambda \quad\text{in }\V,\quad
		u_n\to u_\lambda\quad\text{in }\Lp{s}
		\quad\text{and}\quad u_n\to u_\lambda \quad \text{a.\,e.\,in }\Omega 
	\end{align}
	for any $s\in [1,p^*)$. Applying the weak lower semicontinuity of the norms and seminorms and Lebesgue's dominated convergence theorem, we infer that
	\begin{align*}
		J_\lambda(u_\lambda ) \leq \liminf_{n\to \infty} J_\lambda(u_n) = \Theta_\lambda ^+<0=J_\lambda(0).
	\end{align*}
	Thus, $u_\lambda \neq 0$. From Lemma \ref{ksdp-lem2}, we know that there exists a unique $t_1^{u_\lambda}>0$ such that $t_1^{u_\lambda} u_\lambda \in \mathcal{N}_\lambda^+$. \\
	
	{\bf Claim:} $u_n\to u_\lambda $ \textit{in} $\V$

	Let us suppose by contradiction that 
	\begin{align*}
		\liminf_{n\to  \infty} \|\nabla u_n\|_p^p> \|\nabla u_\lambda \|_p^p.
	\end{align*}
	From this inequality along with $\psi_{u_\lambda }'(t_1^{u_\lambda })=0$, \eqref{mini:N+:4}, the weak lower semicontinuity of the norms and seminorms and Lebesgue's dominated convergence theorem, we infer that
	\begin{align*}
		\liminf_{n\to \infty} \psi_{u_n}'(t_1^{u_\lambda}) 
		&= \liminf_{n\to \infty}\bigg[ \left[(t_1^{u_\lambda})^{p-1} \|\nabla u_n\|_p^p+ (t_1^{u_\lambda })^{q-1}\|\nabla u_n\|_{q,a}^q\right]\left(a_0 + b_0\phi_\mathcal{H}^{\theta-1}(t_1^{u_\lambda }\nabla u_n)\right)\\
		&\qquad\qquad \quad  -\lambda (t_1^{u_\lambda })^{-\gamma}\int_\Omega |u_n|^{1-\gamma}\,\diff x - (t_1^{u_\lambda })^{r-1}\int_\Omega |u_n|^r\,\diff x\bigg]\\
		&> \left[(t_1^{u_\lambda})^{p-1}\|\nabla u_\lambda \|_p^p+ (t_1^{u_\lambda})^{q-1}\|\nabla u_\lambda \|_{q,a}^q\right]\left(a_0 + b_0\phi_\mathcal{H}^{\theta-1}(t_1^{u_\lambda}\nabla u_\lambda )\right)\\
		&\quad  -\lambda (t_1^{u_\lambda})^{-\gamma}\int_\Omega |u_\lambda|^{1-\gamma}\,\diff x - (t_1^{u_\lambda})^{r-1}\int_\Omega |u_\lambda|^r\,\diff x\\
		& = \psi_{u_\lambda }'(t_1^{u_\lambda})=0.
	\end{align*}
	Thus, we can find a number $n_0\in \N$ such that
	\begin{align*}
		\psi_{u_n}'(t_1^{u_\lambda})>0\quad \text{for all } n \geq n_0.
	\end{align*}
	Recalling the representation in \eqref{ksdp-6}, we get that $\psi_{u_{n}}'(t)<0$ for $t\in(0,1)$, and since $\psi_{u_n}'(1)=0$, this implies $t_1^{u_\lambda}>1$. Hence, $t_1^{u_\lambda} u_\lambda \in \mathcal{N}_\lambda^+$ gives that
	\begin{align*}
		\Theta_\lambda ^+\leq { J_\lambda(t_1^{u_\lambda} u_\lambda )\leq J_\lambda(u_\lambda )}< \liminf_{n\to\infty}J_\lambda(u_n)=\Theta_\lambda ^+
	\end{align*}
	which is a contradiction. Therefore, we find a subsequence such that
	\begin{align*}
		\lim_{n\to  \infty} \|\nabla u_n\|_p^p= \|\nabla u_\lambda \|_p^p.
	\end{align*}

	If we suppose that
	\begin{align*}
		\lim_{n\to  \infty} \|\nabla u_n\|_{q,a}^q> \|\nabla u_\lambda \|_{q,a}^q,
	\end{align*}
	then we can argue as above reaching a contradiction. Therefore, for a subsequence, we have
	\begin{align*}
		\lim_{n\to  \infty} \|\nabla u_n\|_{q,a}^q= \|\nabla u_\lambda \|_{q,a}^q.
	\end{align*}

	From these considerations we obtain that $\varrho_{\mathcal{H}}(\nabla u_n)\to \varrho_{\mathcal{H}}(\nabla u_\lambda)$ and since the integrand corresponding to the modular function is uniformly convex, we get that $\varrho_{\mathcal{H}}(\frac{\nabla u_n-\nabla u_\lambda}{2})\to 0$. From Proposition \ref{proposition_modular_properties}\textnormal{(v)} we obtain that $u_n\to u_\lambda$ in $\V$. The continuity of $J_\lambda$ implies that $J_\lambda(u_n)\to J_\lambda(u_\lambda)$ and so $J_\lambda(u_\lambda)=\Theta_\lambda^+$.

	Finally, we have to show that $u_\lambda \in \mathcal{N}_\lambda^+$. Since $u_n \in \mathcal{N}_\lambda^+$ for all $n\in \N$, it holds
	\begin{align}\label{eq:1}
		\begin{split}
			\psi_{u_n}'(1) 
			&= \left[a_0 +b_0\phi_\mathcal{H}^{\theta-1}(\nabla u_n)\right] \left(\|\nabla u_n\|_p^p+\|\nabla u_n\|_{q,a}^q\right)\\ 
			&\quad - \lambda \int_\Omega |u_n|^{1-\gamma}\,\diff x - \int_\Omega |u_n|^r\,\diff x=0
		\end{split}
	\end{align}
	and
	\begin{align}\label{ineq:1}
		\begin{split}
			\psi_{u_n}''(1)
			& =   \left[a_0 +b_0\phi_\mathcal{H}^{\theta-1}(\nabla u_n)\right] \left[(p-1)\|\nabla u_n\|_p^p+(q-1) \|\nabla u_n\|_{q,a}^q\right]\\ 
			& \quad + b_0(\theta -1)\phi_\mathcal{H}^{\theta-2}(\nabla u_n)\left(\|\nabla u_n\|_p^p+ \|\nabla u_n\|_{q,a}^q\right)^2\\
			&\quad + \lambda \gamma \int_\Omega |u_n|^{1-\gamma}\,\diff x -(r-1) \int_\Omega |u_n|^r\,\diff x>0. 
		\end{split}
	\end{align}
	Passing to the limit as $n\to \infty$ in \eqref{eq:1} and \eqref{ineq:1}, we get
	\begin{align*}
		\psi_{u_\lambda }'(1) =0
		\quad \text{and}\quad 
		\psi''_{u_\lambda }(1)\geq 0.
	\end{align*}
	Recall that $\lambda \in (0,\min\{\Lambda_1,\Lambda_2\})$. Then Lemma \ref{ksdp-lem3-} says that $\mathcal{N}_\lambda^\circ=\emptyset$ and so $\psi''_{u_\lambda }(1)> 0$. This shows that $u_\lambda \in \mathcal{N}_\lambda^+$. Noting that we can work with $|u_\lambda |$ instead of $u_\lambda $, we conclude that $u_\lambda \geq 0$ a.\,e.\,in $\Omega$.
\end{proof}

\begin{lemma}\label{ksdp-lem3}
	Let hypotheses \textnormal{(H)} be satisfied, $u\in \mathcal{N}_\lambda ^\pm$ and let $\lambda>0$. Then there exist $\varepsilon>0$ and a continuous function $\zeta\colon B_\varepsilon(0) \to (0,\infty)$ such that
	\begin{align*}
		\zeta(0)=1 \quad\text{and}\quad \zeta(v)(u+v)\in \mathcal{N}_\lambda ^\pm \quad \text{for all } v \in B_\varepsilon(0),
	\end{align*}
	where $B_\varepsilon(0) := \{v\in \V\,:\, \|v\|<\varepsilon\}$.
\end{lemma}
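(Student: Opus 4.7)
The strategy is to recast the condition $s(u+v) \in \mathcal{N}_\lambda$ as a scalar equation in $s$ and apply a continuous (rather than $C^1$) implicit-function argument based on monotonicity in $s$ and the intermediate value theorem. First I would introduce $\Phi \colon (0,\infty) \times \V \to \R$ by $\Phi(t,v) = \psi_{u+v}'(t)$, namely
\begin{align*}
\Phi(t,v) &= \left[a_0 + b_0 \phi_{\mathcal{H}}^{\theta-1}(t\nabla(u+v))\right]\left(t^{p-1}\|\nabla(u+v)\|_p^p + t^{q-1}\|\nabla(u+v)\|_{q,a}^q\right) \\
&\quad - \lambda t^{-\gamma}\int_\Omega |u+v|^{1-\gamma}\,\diff x - t^{r-1}\int_\Omega |u+v|^r\,\diff x.
\end{align*}
Since $\psi_{s(u+v)}'(1) = s\,\psi_{u+v}'(s)$ and $\psi_{s(u+v)}''(1) = s^2\,\psi_{u+v}''(s)$, the membership $s(u+v) \in \mathcal{N}_\lambda^{\pm}$ is equivalent to $\Phi(s,v)=0$ together with the matching sign of $\psi_{u+v}''(s)$. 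In particular $\Phi(1,0) = \psi_u'(1) = 0$ because $u \in \mathcal{N}_\lambda$, and $\partial_t \Phi(1,0) = \psi_u''(1)$ is strictly positive (resp.\ negative) when $u \in \mathcal{N}_\lambda^+$ (resp.\ $u \in \mathcal{N}_\lambda^-$).

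Next I would verify that $\Phi$ and $\partial_t \Phi$ are continuous on $(0,\infty)\times\V$ near $(1,0)$. The quantities $\|\nabla(u+v)\|_p^p$, $\|\nabla(u+v)\|_{q,a}^q$ and $\phi_{\mathcal{H}}(\nabla(u+v))$ depend continuously on $v\in\V$ by Proposition \ref{proposition_embeddings}(i),(iii); the factors $\phi_\mathcal{H}^{\theta-1}$ and $\phi_\mathcal{H}^{\theta-2}$ appearing in $\partial_t\Phi$ cause no trouble because $u\neq 0$ keeps $\phi_\mathcal{H}(\nabla(u+v))$ bounded away from zero in a neighborhood of $v=0$. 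The integral $\int_\Omega |u+v|^r\,\diff x$ is continuous via the compact embedding $\V \hookrightarrow L^r(\Omega)$ since $r < p^*$. The delicate piece is $\int_\Omega |u+v|^{1-\gamma}\,\diff x$: for $v_n \to v$ in $\V$, Proposition \ref{proposition_embeddings}(ii) yields uniform boundedness of $\|u+v_n\|_{p^*}^{1-\gamma}$, hence $\{|u+v_n|^{1-\gamma}\}$ is bounded in $L^{p^*/(1-\gamma)}(\Omega)$ and therefore equi-integrable; a.e.\ convergence along a subsequence combined with Vitali's convergence theorem gives convergence of the integrals, and the standard subsequence-of-subsequence trick upgrades this to continuity of the whole sequence.

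With continuity in hand, I would pick $\delta,\varepsilon>0$ so small that $\partial_t\Phi$ retains the strict sign of $\psi_u''(1)$ on $[1-\delta, 1+\delta]\times B_\varepsilon(0)$ and, by continuity together with the strict monotonicity of $\Phi(\cdot,0)$ through its zero at $t=1$, $\Phi(1-\delta,v)$ and $\Phi(1+\delta,v)$ keep opposite signs for every $v\in B_\varepsilon(0)$. For each such $v$ the intermediate value theorem and strict monotonicity of $t\mapsto\Phi(t,v)$ then produce a unique $\zeta(v)\in(1-\delta,1+\delta)$ with $\Phi(\zeta(v),v)=0$. Uniqueness combined with continuity of $\Phi$ and compactness of $[1-\delta,1+\delta]$ gives continuity of $\zeta$ and clearly $\zeta(0)=1$; shrinking $\varepsilon$ further if necessary ensures that $\psi_{u+v}''(\zeta(v))$ retains the sign of $\psi_u''(1)$, so the scaling identities above give $\zeta(v)(u+v)\in\mathcal{N}_\lambda^{\pm}$ as required. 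The main obstacle is precisely the singular integral $\int_\Omega |u+v|^{1-\gamma}\,\diff x$: it is merely continuous (not $C^1$) in $v$, which rules out the classical implicit function theorem and forces the IVT-plus-monotonicity route outlined above.
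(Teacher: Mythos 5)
Your proof is correct, and it reaches the conclusion by a genuinely more hands-on route than the paper. The paper defines $F(v,t)=t^{\gamma}\psi_{u+v}'(t)$ (the factor $t^{\gamma}$ coming from the representation \eqref{ksdp-6}), checks $F(0,1)=0$ and $\frac{\partial F}{\partial t}(0,1)=\psi_u''(1)\neq 0$, and then invokes the implicit function theorem from Berger to produce $\zeta$; the sign condition on $\psi''_{u+v}(\zeta(v))$ is then preserved by continuity, exactly as you do at the end. You instead build the implicit function by hand: continuity of $\Phi$ and of $\partial_t\Phi(t,v)=\psi_{u+v}''(t)$ near $(1,0)$, a uniform strict sign for $\partial_t\Phi$ on a rectangle $[1-\delta,1+\delta]\times B_\varepsilon(0)$, opposite signs of $\Phi(1\pm\delta,\cdot)$, the intermediate value theorem for existence, strict monotonicity for uniqueness, and the compactness/subsequence argument for continuity of $\zeta$. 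This buys you something real: since $v\mapsto\int_\Omega|u+v|^{1-\gamma}\,\diff x$ is merely continuous and not $C^1$ on $\V$, the classical $C^1$ implicit function theorem does not literally apply, and one must either cite a version requiring only continuity in the parameter together with a continuous, invertible partial derivative in $t$ (which is what the paper's reference is implicitly doing), or argue as you do; your route is more elementary and self-contained. Two minor remarks: for the singular integral, the elementary inequality $\bigl||a|^{1-\gamma}-|b|^{1-\gamma}\bigr|\le |a-b|^{1-\gamma}$ together with H\"older's inequality gives the continuity more quickly than the Vitali argument; and your final ``shrinking $\varepsilon$ further'' step is already automatic from your choice of the rectangle, since $\partial_t\Phi(t,v)=\psi_{u+v}''(t)$ keeps its sign there and $\zeta(v)\in(1-\delta,1+\delta)$.
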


\begin{proof}
	We define the map $F\colon \V \times (0,\infty) \to \R$ given by
	\begin{align*}
		F(v,t)= t^{\gamma}\psi_{u+v }'(t)\quad \text{for }  (v,t)\in \V \times (0,\infty). 
	\end{align*}
	Note that
	\begin{align}\label{derivative-zeta}
		\frac{\partial F}{\partial t}(v,t)= \gamma t^{\gamma-1}\psi'_{u+v}(t) + t^\gamma \psi_{u+v}''(t).
	\end{align}
	Since $u\in \mathcal{N}_\lambda^+$, we obtain
	\begin{align}\label{ksdp-lem3-1}
		F(0,1) = \psi_u'(1)=0\quad\text{and}\quad \frac{\partial F}{\partial t}(0,1)=\psi_u''(1)>0.
	\end{align}
	Hence, we can apply the implicit function theorem to $F$ at $(0,1)$ (see, for example, Berger \cite[p.\,115]{Berger-1977}) to claim that there exists $\varepsilon>0$ such that for any $v\in \V$ with $\|v\|<\varepsilon$,  the equation $F(v,t)=0$ has a continuous unique solution $t=\zeta(v)>0$.  From this and \eqref{ksdp-lem3-1}, we infer that
	\begin{align*}
		\zeta(0)=1 \quad\text{and}\quad F(v,\zeta(v))=0 \quad \text{for all }v\in \V,
	\end{align*}
	whenever $\|v\|\leq \varepsilon$.  Therefore, $\zeta(v)(u+v)\in \mathcal{N}_\lambda $ for all $\|v\|\leq \varepsilon$ and from \eqref{derivative-zeta} we conclude that
	\begin{align*}
		\frac{\partial F}{\partial t}(v,\zeta (v))= (\zeta(v))^\gamma \psi''_{u+v}(\zeta (v))
		\quad \text{for all }\|v\| \leq \varepsilon.
	\end{align*}
	Recall that $\zeta(0)=1$ and $\frac{\partial F}{\partial t}(0,1)>0$. We observe that for $\delta \in (0,1)$ the mapping
	\begin{align*}
		\mathcal{F}_{u,\delta}\colon \V \times [1-\delta, 1+ \delta] \to \R \quad \text{defined as} \quad \mathcal{F}(v,\ell):=\psi''_{u+v}(\ell)
	\end{align*}
	is continuous. Hence, we can choose $\varepsilon>0$ small enough such that
	\begin{align*}
		\zeta(v)(u+v)\in \mathcal{N}_\lambda^+ \quad \text{for all }\|v\|\leq \varepsilon.
	\end{align*}
	The proof for the case $u\in \mathcal{N}_\lambda^-$ works similar.
\end{proof}

\begin{proposition}\label{ksdp-prop4}
	Let hypotheses \textnormal{(H)} be satisfied and let $\lambda \in (0,\min\{\Lambda_1,\Lambda_2\})$, with $\Lambda_1$, $\Lambda_2$ given in Lemmas \ref{ksdp-lem3-} and \ref{ksdp-lem2}. Then there exist $\varepsilon, \delta>0$ such that
	\begin{align*}
		J_\lambda(u_\lambda )\leq J_\lambda(u_\lambda +th) \quad \text{for all } h\in \V
	\end{align*} 
	when { $t\in P_\delta:=\{t \in [0,\delta]\,:\, th \in B_\varepsilon(0)\}$}, where $u_\lambda $ is as defined in Proposition \ref{mini:N+}.
\end{proposition}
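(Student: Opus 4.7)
My plan is to exploit the global minimizer property of $u_\lambda$ on $\mathcal{N}_\lambda^+$ (Proposition \ref{mini:N+}) together with the local continuous projection $\zeta$ onto $\mathcal{N}_\lambda^+$ provided by Lemma \ref{ksdp-lem3}. Applying that lemma to $u_\lambda$, I obtain $\varepsilon_1 > 0$ and a continuous map $\zeta\colon B_{\varepsilon_1}(0) \to (0,\infty)$ with $\zeta(0)=1$ and $\zeta(v)(u_\lambda + v) \in \mathcal{N}_\lambda^+$ for every $v \in B_{\varepsilon_1}(0)$. Since $J_\lambda(u_\lambda) = \Theta_\lambda^+$ is the infimum of $J_\lambda$ over $\mathcal{N}_\lambda^+$, this yields immediately
\[
J_\lambda(u_\lambda) \leq J_\lambda(\zeta(v)(u_\lambda + v)) = \psi_{u_\lambda + v}(\zeta(v)) \quad \text{for all } v \in B_{\varepsilon_1}(0).
\]

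The next step is to show that, for $v$ in a possibly smaller ball, $\psi_{u_\lambda+v}(\zeta(v)) \leq \psi_{u_\lambda + v}(1) = J_\lambda(u_\lambda + v)$. Combining the uniqueness statements in Lemma \ref{ksdp-lem2} and Lemma \ref{ksdp-lem3}, $\zeta(v)$ coincides with $t_1^{u_\lambda+v}$, the unique point at which $\psi_{u_\lambda+v}$ attains its strict global minimum on $(0, t_2^{u_\lambda+v})$. Hence the desired inequality follows once I verify that $1 < t_2^{u_\lambda+v}$. Now $u_\lambda \in \mathcal{N}_\lambda^+$ already gives $t_2^{u_\lambda} > t_{\max}^{u_\lambda} > 1$, so it suffices to ensure the continuity of $v \mapsto t_2^{u_\lambda+v}$ at $v=0$. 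I would obtain this by a second application of the implicit function theorem, now to $F(v,t) = t^\gamma \psi_{u_\lambda + v}'(t)$ at the point $(0, t_2^{u_\lambda})$: one has $F(0,t_2^{u_\lambda})=0$ (since $t_2^{u_\lambda} u_\lambda \in \mathcal{N}_\lambda^-$) and $\partial_t F(0,t_2^{u_\lambda}) = (t_2^{u_\lambda})^\gamma \psi_{u_\lambda}''(t_2^{u_\lambda}) < 0$, so there is a continuous branch $v \mapsto t_2^{u_\lambda+v}$ with value $>1$ on some ball $B_\varepsilon(0) \subseteq B_{\varepsilon_1}(0)$.

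Putting the two inequalities together gives $J_\lambda(u_\lambda) \leq J_\lambda(u_\lambda + v)$ for every $v \in B_\varepsilon(0)$. Choosing any $\delta > 0$ (e.g.\ $\delta=1$) and writing $v = th$ with $t \in P_\delta$, the constraint $th \in B_\varepsilon(0)$ is built into the definition of $P_\delta$ and the proposition follows. The main obstacle is the control $t_2^{u_\lambda+v} > 1$, which requires an auxiliary implicit function argument at the $\mathcal{N}_\lambda^-$ endpoint of the fiber; the rest is a clean consequence of the minimizer property and the fact that the fibering map $\psi_{u_\lambda+v}$ is strictly monotone on $(0, t_1^{u_\lambda+v})$ and $(t_1^{u_\lambda+v}, t_2^{u_\lambda+v})$.
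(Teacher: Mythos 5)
Your proof is correct and follows the same overall strategy as the paper: sandwich $J_\lambda(u_\lambda)=\Theta_\lambda^+\leq \psi_{u_\lambda+v}(\zeta(v))\leq \psi_{u_\lambda+v}(1)=J_\lambda(u_\lambda+v)$ using the projection $\zeta$ from Lemma \ref{ksdp-lem3} and the minimality from Proposition \ref{mini:N+}. Where you genuinely diverge is in the justification of the middle inequality $\psi_{u_\lambda+v}(\zeta(v))\leq\psi_{u_\lambda+v}(1)$. The paper argues locally: it shows $\psi_{u_\lambda+th}''(1)>0$ persists for small $t$, invokes convexity of $\psi_{u_\lambda+th}$ on a neighborhood of $1$ containing the critical point $\zeta(th)$, and concludes the critical point is a minimum there; this forces the extra parameter $\delta$ and the set $P_\delta$ into the statement. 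You instead argue globally: identifying $\zeta(v)=t_1^{u_\lambda+v}$ via the uniqueness in Lemma \ref{ksdp-lem2}, using the sign pattern of $\psi_{u_\lambda+v}'$ (negative on $(0,t_1)$, positive on $(t_1,t_2)$) to see that $t_1^{u_\lambda+v}$ is the global minimizer of the fibering map on $(0,t_2^{u_\lambda+v}]$, and then controlling $t_2^{u_\lambda+v}>1$ by a second implicit function argument at $(0,t_2^{u_\lambda})$, where $\partial_tF(0,t_2^{u_\lambda})=(t_2^{u_\lambda})^\gamma\psi_{u_\lambda}''(t_2^{u_\lambda})<0$. Both routes are sound; yours buys a statement that is uniform in $v\in B_\varepsilon(0)$ (so $\delta$ is indeed irrelevant) and avoids the paper's somewhat loosely worded ``convexity of $\psi''$'' step, at the cost of an extra application of the implicit function theorem and the (easily checked) observation that the continuous branch through $t_2^{u_\lambda}$ cannot jump to the $t_1$-branch for small $v$. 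One cosmetic point: since the solutions of $F(v,t)=0$ are exactly $t_1^{u_\lambda+v}$ and $t_2^{u_\lambda+v}$, you should note that continuity plus $t_2^{u_\lambda}>1=t_1^{u_\lambda}$ and $\zeta(v)\to1$ is what pins the new branch to $t_2^{u_\lambda+v}$; in fact, for your purposes it already suffices that the branch value exceeds $1$, since $t_2^{u_\lambda+v}$ is the larger of the two roots.
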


\begin{proof}
	For $h\in \V$  we define the function $f_h\colon[0,\infty)\to \mathbb R$ given by
	\begin{align*}
		f_h(t)= \psi_{u_\lambda +th}''(1)\quad\text{for }t\in [0,\infty).
	\end{align*} 
	Since $u_\lambda \in \mathcal{N}_\lambda^+$,  we get
	\begin{align*}
		f_h(0)= \psi_{u_\lambda }''(1)>0.
	\end{align*}
	Recalling the expression of $\psi''_u$ in \eqref{second-derivative},  $u_\lambda \not \equiv0$ in $\Omega$ and by using the same arguments as in Lemma \ref{ksdp-lem3}, we assert that there exists $\delta_0>0$ such that
	\begin{align*}
		\psi_{u_\lambda +th}''(1)=f_h(t)>0\quad \text{for all } t\in [0,\delta_0].
	\end{align*}
	
	From Lemma \ref{ksdp-lem3} for $u_\lambda \in \mathcal{N}_\lambda^+$, we find $\varepsilon >0$ and a continuous map $\zeta\colon B_\varepsilon(0) \to (0,\infty)$ such that
	\begin{align*}
		\zeta(th)(u_\lambda +th)\in \mathcal{N}_\lambda^+ \quad \text{for all } t \in P_{\delta_0} \quad\text{with}\quad \zeta(t h)\to 1\quad\text{as } t\to 0^+.
	\end{align*} 
	This implies, in particular, the convexity of the function $\psi_{u_\lambda +th}''$ in a neighborhood of $1$ by taking $\delta_0$ small enough. The continuity of the map  $u_\lambda \mapsto \psi_{u_\lambda}''(1)$ and the fact that $\psi_{u_\lambda +th}'(\zeta(th)) =0$ allows us to choose $\delta \in (0, \delta_0)$ such that $\psi_{u_\lambda + th}''(1) >0$ and $\psi_{u_\lambda +th}(\zeta(t)) \leq \psi_{u_\lambda +th}(1)$ for $t \in P_{\delta}$. Therefore taking Proposition \ref{mini:N+} into account, we get
	\begin{align*}
		J_\lambda(u_\lambda )= \Theta_\lambda^+ \leq J_\lambda(\zeta(t)(u_\lambda +th)) = \psi_{u_\lambda +th}(\zeta(t))\leq \psi_{u_\lambda +th}(1)= J_\lambda(u_\lambda +th).
	\end{align*}
\end{proof}

Now we are ready to prove the existence of the first weak solution to problem \eqref{problem}.

\begin{proposition}\label{existence:first}
	Let hypotheses \textnormal{(H)} be satisfied and let $\lambda \in (0,\min\{\Lambda_1,\Lambda_2\})$, with $\Lambda_1$, $\Lambda_2$ given in Lemmas \ref{ksdp-lem3-} and \ref{ksdp-lem2}. Then, $u_\lambda$ is a weak solution of  problem \eqref{problem} with $J_\lambda(u_\lambda )<0$.
\end{proposition}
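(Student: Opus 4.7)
The aim is to promote the minimizer $u_\lambda$ from Proposition \ref{mini:N+} to a weak solution of \eqref{problem}. Since $J_\lambda(u_\lambda) = \Theta_\lambda^+ < 0$ and $u_\lambda \geq 0$ a.\,e.\,are already known, it only remains to establish the variational identity. The plan is to exploit the one-sided minimality from Proposition \ref{ksdp-prop4}, first extracting an inequality for nonnegative test functions, then upgrading it to an equality on all of $\V$ by testing with a positive-part truncation.

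\emph{Inequality for nonnegative tests.} For fixed $\varphi \in \V$ with $\varphi \geq 0$, Proposition \ref{ksdp-prop4} yields $J_\lambda(u_\lambda + t\varphi) \geq J_\lambda(u_\lambda)$ for small $t > 0$. I would divide by $t$ and pass to the limit $t \to 0^+$: the Gateaux derivatives of the modular Kirchhoff part and of the $L^r$ part are standard and produce the expected contributions. For the singular part, since $\varphi \geq 0$ and $u_\lambda \geq 0$, the difference quotient $\frac{(u_\lambda + t\varphi)^{1-\gamma}-u_\lambda^{1-\gamma}}{t(1-\gamma)}$ is nonnegative and converges pointwise a.\,e.\,to $u_\lambda^{-\gamma}\varphi$, so Fatou's lemma delivers
\begin{align*}
m(\phi_\mathcal{H}(\nabla u_\lambda)) \langle \mathcal{L}_{p,q}^a(u_\lambda), \varphi\rangle - \int_\Omega u_\lambda^{r-1} \varphi \, \diff x \geq \lambda \int_\Omega u_\lambda^{-\gamma} \varphi \, \diff x.
\end{align*}
Since the left-hand side is finite, this inequality simultaneously certifies $u_\lambda^{-\gamma} \varphi \in L^1(\Omega)$ for every nonnegative $\varphi \in \V$. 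Testing against any strictly positive bump function $\varphi \in C_c^\infty(\Omega)$ then forces $u_\lambda > 0$ a.\,e.\,in $\Omega$, as otherwise the right-hand integral would be infinite on a set of positive measure.

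\emph{Equality for general tests.} For arbitrary $\varphi \in \V$ and $\varepsilon > 0$, the function $\eta_\varepsilon := (u_\lambda + \varepsilon\varphi)^+ \in \V$ is admissible in the inequality above. Writing $\Omega_\varepsilon = \{u_\lambda + \varepsilon \varphi < 0\}$, I would decompose every integral as $\int_\Omega - \int_{\Omega_\varepsilon}$ and invoke the Nehari identity $\psi_{u_\lambda}'(1) = 0$ to cancel the leading zeroth-order contributions. After dividing by $\varepsilon$, the surviving terms read
\begin{align*}
m(\phi_\mathcal{H}(\nabla u_\lambda)) \langle \mathcal{L}_{p,q}^a(u_\lambda), \varphi\rangle - \int_\Omega u_\lambda^{r-1}\varphi \, \diff x - \lambda \int_\Omega u_\lambda^{-\gamma}\varphi \, \diff x \geq -R(\varepsilon),
\end{align*}
where $R(\varepsilon)$ collects integrals supported on $\Omega_\varepsilon$. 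Since $u_\lambda > 0$ a.\,e., $|\Omega_\varepsilon| \to 0$ as $\varepsilon \to 0^+$. Using the pointwise bound $\varepsilon^{-1}|u_\lambda^{-\gamma}(u_\lambda + \varepsilon\varphi)|\leq u_\lambda^{-\gamma}|\varphi|$ on $\Omega_\varepsilon$ together with the integrability $u_\lambda^{-\gamma}|\varphi| \in L^1(\Omega)$ from the previous step (applied to the nonnegative function $|\varphi| \in \V$), and analogous dominated estimates for the modular and $L^r$ contributions, one concludes $R(\varepsilon) \to 0$. This yields the required inequality for the given $\varphi$; substituting $-\varphi$ upgrades it to equality, producing the weak formulation.

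The principal obstacle is the control of the singular residual on $\Omega_\varepsilon$: one must combine the strict positivity $u_\lambda > 0$ a.\,e.\,with the a priori integrability of $u_\lambda^{-\gamma}|\varphi|$ to absorb the sign-change set into a vanishing correction. Once this is carried out, the conclusion $J_\lambda(u_\lambda) < 0$ is inherited directly from Proposition \ref{mini:N+}.
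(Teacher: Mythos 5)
Your proposal is correct and follows essentially the same route as the paper: one-sided minimality from Proposition \ref{ksdp-prop4} plus Fatou's lemma for nonnegative test functions, then the truncation $(u_\lambda+\varepsilon\varphi)^+$ together with the Nehari identity $\psi_{u_\lambda}'(1)=0$ and the shrinking of $\{u_\lambda+\varepsilon\varphi\le 0\}$. The one genuine (and pleasant) difference is the order of operations: the paper first proves $u_\lambda>0$ a.e.\ by a separate contradiction argument (the difference quotient of $J_\lambda$ blows up to $-\infty$ on a null set of $u_\lambda$), and only then runs Fatou; you instead extract positivity as a corollary of the Fatou inequality, since finiteness of $\int_\Omega u_\lambda^{-\gamma}\varphi\,\diff x$ for a.e.-positive $\varphi$ forces $\{u_\lambda=0\}$ to be null. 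This merges the paper's Steps 1 and 2 into one and is logically equivalent; just note that a single compactly supported bump is not enough --- you need a $\varphi\in\V$ positive a.e.\ in $\Omega$, or an exhaustion by bumps.

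One imprecision in your Step 3: the claim that the \emph{entire} residual $R(\varepsilon)$ tends to $0$ by dominated estimates is not quite right. The gradient contribution on $\Omega_\varepsilon$ contains the piece
\begin{align*}
\frac{1}{\varepsilon}\,m(\phi_\mathcal{H}(\nabla u_\lambda))\int_{\Omega_\varepsilon}\big(|\nabla u_\lambda|^p+a(x)|\nabla u_\lambda|^q\big)\,\diff x,
\end{align*}
which is an indeterminate $0/0$ expression and admits no dominating bound after division by $\varepsilon$. It must instead be discarded using its favorable sign (it is nonnegative and enters the inequality with the sign that only strengthens the lower bound), exactly as the paper does; only the remaining piece $\int_{\Omega_\varepsilon}(|\nabla u_\lambda|^{p-2}\nabla u_\lambda+a(x)|\nabla u_\lambda|^{q-2}\nabla u_\lambda)\cdot\nabla\varphi\,\diff x$ vanishes by absolute continuity. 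Your bounds for the singular and $L^r$ residuals are fine (and in fact those terms also have favorable signs and could simply be dropped). With that one-line correction the argument is complete.
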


\begin{proof}
	We have to show that $u_\lambda >0$ a.\,e.\,in $\Omega$ and for every $\varphi \in \V$, $u_\lambda^{-\gamma} \varphi \in L^1(\Omega)$ and 
	\begin{align}\label{weak:form}
		\begin{split}
			m(\phi_\mathcal{H}(\nabla u_\lambda))\left\langle \mathcal{L}_{p,q}^{a}(u), \varphi\right\rangle
			= \lambda \int_{\Omega} u_\lambda^{-\gamma}\varphi\,\diff x +\int_\Omega u_\lambda^{r-1} \varphi\,\diff x.
		\end{split}	
	\end{align}
	We divide the proof into three steps.\\

	{\bf Step 1:} $u_\lambda >0$ \textit{a.\,e.\,in} $\Omega.$
	
	From Proposition \ref{mini:N+} we already know that $u_\lambda \geq 0$ a.\,e.\,in $\Omega$. In order to prove the strict positivity, we argue by contradiction. Suppose that there exists a set $K \subset \Omega$ with positive measure such that $u_\lambda \equiv 0$ in $K$.
	Applying  Proposition \ref{ksdp-prop4} with $h\in \V$ satisfying $h>0$ and let $t \in P_\delta\setminus\{0\}$, then $(u_\lambda + th)^{1-\gamma} > u_\lambda^{1-\gamma}$ in $\Omega \setminus K$, and we get
	\begin{align*}
		\begin{split}
			0 &\leq J_\lambda(u_\lambda + t h) - J_\lambda(u_\lambda) \\
			&= M[\phi_\mathcal{H}(\nabla (u_\lambda + th))] - M[\phi_\mathcal{H}(\nabla u_\lambda)] -\frac{\lambda}{1-\gamma}\int_\Omega \left[(u_\lambda + th)^{1-\gamma}- u_\lambda^{1-\gamma}\right]\,\diff x \\
			& \quad - \frac{1}{r} \left[\|u_\lambda + th\|_{r}^r - \|u_\lambda\|_r^r \right]\\
			& < M[\phi_\mathcal{H}(\nabla (u_\lambda + th))] - M[\phi_\mathcal{H}(\nabla u_\lambda)] -\frac{\lambda t^{1-\gamma}}{(1-\gamma) }\int_K h^{1-\gamma} \,\diff x 
			- \frac{1}{r} \left(\|u_\lambda + th\|_{r}^r - \|u_\lambda\|_r^r \right).
		\end{split}
	\end{align*}
	Dividing by $t>0$ and passing to the limit as $t \to 0^+$ in the estimate above, we conclude that
	\begin{align*}
		0 \leq \frac{J_\lambda(u_\lambda + t h) - J_\lambda(u_\lambda)}{t} \to - \infty,
	\end{align*}
	which is a contradiction. Thus, $u_\lambda >0$ a.\,e.\,in $\Omega$.\\
	
	{\bf Step 2:} \textit{For any $h \in \V$ with $h \geq 0$ let us verify}
	\begin{align}\label{weak:ineq}
		\begin{split}
			m(\phi_\mathcal{H}(\nabla u_\lambda))\left\langle \mathcal{L}_{p,q}^{a}(u), h\right\rangle
			\geq \lambda \int_{\Omega} u_\lambda^{-\gamma}h\,\diff x +\int_\Omega u_\lambda^{r-1} h\,\diff x.
		\end{split}
	\end{align}

	For this purpose, let us consider the nonnegative and measurable functions $\zeta_n \colon \Omega \to \mathbb{R}^+ $ defined by
	\begin{align*}
		\zeta_n(x) := \frac{(u_\lambda(x) + t_n h(x))^{1-\gamma} - u_\lambda(x)^{1-\gamma}}{t_n}
	\end{align*}
	where $\{t_n\}_{n \in \N}$ is a decreasing sequence such that $\lim_{n \to \infty} t_n =0$. Clearly, we have
	\begin{align*}
		\lim_{n \to \infty} \zeta_n (x)= (1-\gamma) u_\lambda(x)^{-\gamma} h(x) \quad \text{for a.\,a.\,} x \in \Omega.
	\end{align*}
	Now, by using Fatou's Lemma, we get
	\begin{align}\label{nuova}
		\lambda \int_{\Omega} u_\lambda^{-\gamma}h \,\diff x\leq \frac{\lambda}{1-\gamma} \liminf_{n \to \infty} \int_{\Omega} \zeta_n \,\diff x.
	\end{align}
	Arguing similarly to Step 1 and applying again Proposition \ref{ksdp-prop4}, we obtain
	\begin{align*}
		\begin{split}
			0 &\leq \frac{J_\lambda(u_\lambda + t_n h) - J_\lambda(u_\lambda)}{t_n} \\
			&= \frac{M[\phi_\mathcal{H}(\nabla (u_\lambda + t_nh))] - M[\phi_\mathcal{H}(\nabla u_\lambda)]}{t_n} -\frac{\lambda}{1-\gamma}\int_\Omega \zeta_n\,\diff x
			- \frac{1}{r}\int_\Omega \frac{(u_\lambda + t_n h)^r- u_\lambda^r}{t_n}\,\diff x.
		\end{split}
	\end{align*}
	Letting $n\to\infty$ in the inequality above and using \eqref{nuova} it follows that
	\begin{align*}
		\lambda \int_{\Omega} u_\lambda^{-\gamma}h \,\diff x
		\leq m(\phi_\mathcal{H}(\nabla u_\lambda))\left\langle \mathcal{L}_{p,q}^{a}(u), h\right\rangle- \int_\Omega u_\lambda^{r-1} h\,\diff x.
	\end{align*}
	Hence, \eqref{nuova} is satisfied and we also infer that $u_\lambda^{-\gamma} h \in L^1(\Omega)$. Therefore, we have $u_\lambda^{-\gamma} \varphi \in L^1(\Omega)$ for $\varphi\in\V$ since $\varphi=\varphi^+-\varphi^-$ with $\varphi^{\pm}=\max\{\pm \varphi,0\}$.\\

	{\bf Step 3:} $u_\lambda$ \textit{satisfies} \eqref{weak:form}.
	
	Let $\varphi\in \V$ and let $\varepsilon>0$. We take $h= (u_\lambda + \varepsilon \varphi)^+$ as test function in \eqref{weak:ineq} and use $u_\lambda \in \mathcal{N}_\lambda$ as well as $\Omega  = \{u_\lambda +\varepsilon \varphi > 0\}\cup \{u_\lambda +\varepsilon \varphi \leq 0\}$. This leads to
	\begin{align}\label{wealsol:est}
		\begin{split}
			0 &\leq m(\phi_\mathcal{H}(\nabla u_\lambda))\;\langle \mathcal{L}_{p,q}^a (u_\lambda),(u_\lambda + \varepsilon \varphi)^+ \rangle - \lambda \int_{\Omega} u_\lambda^{-\gamma} (u_\lambda + \varepsilon \varphi)^+ \,\diff x - \int_\Omega u_\lambda^{r-1} (u_\lambda + \varepsilon \varphi)^+ \,\diff x\\
			& = m(\phi_\mathcal{H}(\nabla u_\lambda))  \int_\Omega \left(|\nabla u_\lambda|^{p-2}\nabla u_\lambda+ a(x) |\nabla u_\lambda|^{q-2}\nabla u_\lambda \right) \cdot \nabla (u_\lambda + \varepsilon \varphi) \,\diff x \\
			& \quad - \lambda \int_{\Omega} u_\lambda^{-\gamma} (u_\lambda + \varepsilon \varphi) \,\diff x - \int_\Omega u_\lambda^{r-1} (u_\lambda + \varepsilon \varphi) \,\diff x\\
			&\quad - m(\phi_\mathcal{H}(\nabla u_\lambda))  \int_{\{u_\lambda + \varepsilon \varphi \leq 0\}} \left(|\nabla u_\lambda|^{p-2}\nabla u_\lambda+ a(x) |\nabla u_\lambda|^{q-2}\nabla u_\lambda \right)\cdot \nabla (u_\lambda + \varepsilon \varphi) \,\diff x \\
			&  \quad  + \lambda \int_{\{u_\lambda + \varepsilon \varphi \leq 0\}} u_\lambda^{-\gamma} (u_\lambda + \varepsilon \varphi) \,\diff x + \int_{\{u_\lambda + \varepsilon \varphi \leq 0\}} u_\lambda^{r-1} (u_\lambda + \varepsilon \varphi) \,\diff x\\
			&= \varepsilon \left[m(\phi_\mathcal{H}(\nabla u_\lambda))  \int_\Omega \left(|\nabla u_\lambda|^{p-2}\nabla u_\lambda+ a(x) |\nabla u_\lambda|^{q-2}\nabla u_\lambda \right) \cdot \nabla \varphi \,\diff x  - \int_{\Omega} \left[ \lambda u_\lambda^{-\gamma}  - u_\lambda^{r-1}\right] \varphi \,\diff x \right]\\
			&\quad  - m(\phi_\mathcal{H}(\nabla u_\lambda))  \int_{\{u_\lambda + \varepsilon \varphi \leq 0\}} \left(|\nabla u_\lambda|^{p-2}\nabla u_\lambda+ a(x) |\nabla u_\lambda|^{q-2}\nabla u_\lambda \right)\cdot \nabla (u_\lambda +\varepsilon \varphi) \,\diff x \\
			& \quad + \lambda \int_{\{u_\lambda + \varepsilon \varphi \leq 0\}} u_\lambda^{-\gamma} (u_\lambda + \varepsilon \varphi) \,\diff x + \int_{\{u_\lambda + \varepsilon \varphi \leq 0\}} u_\lambda^{r-1} (u_\lambda + \varepsilon \varphi) \,\diff x\\
			& \leq  \varepsilon \bigg[m(\phi_\mathcal{H}(\nabla u_\lambda))  \int_\Omega \left(|\nabla u_\lambda|^{p-2}\nabla u_\lambda+ a(x) |\nabla u_\lambda|^{q-2}\nabla u_\lambda \right)\cdot \nabla \varphi \,\diff x  - \int_{\Omega} \left[ \lambda u_\lambda^{-\gamma}  - u_\lambda^{r-1}\right] \varphi \,\diff x\\
			& \quad \quad - m(\phi_\mathcal{H}(\nabla u_\lambda))  \int_{\{u_\lambda + \varepsilon \varphi \leq 0\}} \left(|\nabla u_\lambda|^{p-2}\nabla u_\lambda+ a(x) |\nabla u_\lambda|^{q-2}\nabla u_\lambda \right) \cdot \nabla \varphi  \,\diff x\bigg].
		\end{split}
	\end{align}
	Since $|\{x \in \Omega\,:\, u_\lambda(x) + \varepsilon \varphi(x) \leq 0\}| \to 0$ as $\varepsilon \to 0$ by Step 1, we know that
	\begin{align}\label{conv:est}
		m(\phi_\mathcal{H}(\nabla u_\lambda))  \int_{\{u_\lambda + \varepsilon \varphi \leq 0\}} \left(|\nabla u_\lambda|^{p-2}\nabla u_\lambda+ a(x) |\nabla u_\lambda|^{q-2}\nabla u_\lambda \right)  \cdot \nabla \varphi  \,\diff x \to 0 \quad \text{as } \varepsilon \to 0.
	\end{align}
	Now, dividing \eqref{wealsol:est} by $\varepsilon$ and passing to the limit as $\varepsilon \to 0$ by using \eqref{conv:est}, we get
	\begin{align*}
		m(\phi_\mathcal{H}(\nabla u_\lambda))\left\langle \mathcal{L}_{p,q}^a (u_\lambda),\varphi \right\rangle \geq  \lambda \int_{\Omega} u_\lambda^{-\gamma} \varphi \,\diff x +\int_\Omega u_\lambda^{r-1} \varphi \,\diff x.
	\end{align*}
	The arbitrariness of $\varphi \in  \V$ implies that equality must hold.  Hence $u_\lambda\in\V$ is a weak solution of problem \eqref{problem} with $J_\lambda(u_\lambda)=\Theta_\lambda ^+ <0$. 
\end{proof}

Let us now prove the existence of a second weak solution of problem \eqref{problem}. For this, we minimize the energy functional $J_\lambda$ restricted to the set $\mathcal{N}_\lambda^-$. We define
\begin{align*}
	\Theta_\lambda ^- = \inf_{u\in\mathcal{N}_\lambda^-}J_\lambda(u).
\end{align*}

\begin{proposition}\label{pro:prelim:sec-sol}
	Let hypotheses \textnormal{(H)} be satisfied. Then there exists $\Lambda_3\in  (0,\min\{\Lambda_1,\Lambda_2\}]$,  with $\Lambda_1$, $\Lambda_2$ given in Lemmas \ref{ksdp-lem3-} and \ref{ksdp-lem2}, such that $\Theta_\lambda^- >0$ for all $\lambda \in (0, \Lambda_3)$. Moreover, for every $\lambda \in (0, \Lambda_3)$, there exists $v_\lambda \in \mathcal{N}_{\lambda}^-$ such that $v_\lambda \geq 0$ a.\,e.\,in $\Omega$ and $\Theta_\lambda^-= J_\lambda(v_\lambda)$.
\end{proposition}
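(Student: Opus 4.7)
My approach splits into two pieces matching the two assertions. For the positivity of $\Theta_\lambda^-$, fix any $v\in\mathcal{N}_\lambda^-$ and use the Nehari identity $\psi_v'(1)=0$ to eliminate $\int_\Omega|v|^r\,\diff x$ in the expression of $J_\lambda(v)$. Grouping the resulting gradient terms in the same way as in the proof of Lemma~\ref{ksdp-lem1} produces a nonnegative coefficient on $a_0$, a positive multiple of $\phi_\mathcal{H}^{\theta-1}(\nabla v)\varrho_\mathcal{H}(\nabla v)$ on $b_0$, and a remainder of the form $\lambda\bigl(\tfrac{1}{r}-\tfrac{1}{1-\gamma}\bigr)\int_\Omega|v|^{1-\gamma}\,\diff x$, which is bounded in absolute value via H\"older's inequality and \eqref{best-sobolev-constant}. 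The inequality $\phi_\mathcal{H}^{\theta-1}(\nabla v)\varrho_\mathcal{H}(\nabla v)\geq q^{1-\theta}\|\nabla v\|_p^{p\theta}$ already exploited in Lemma~\ref{ksdp-lem1} then yields
\[
J_\lambda(v)\geq K_1\|\nabla v\|_p^{p\theta}-K_2\lambda\|\nabla v\|_p^{1-\gamma}
\]
for constants $K_1,K_2>0$ independent of $\lambda$ and $v$. Since Proposition~\ref{gap-struc} gives $\|\nabla v\|_p^p>D_2$ uniformly on $\mathcal{N}_\lambda^-$ and $p\theta>1-\gamma$, one checks that for $\lambda$ below a suitable threshold $\Lambda_3\in(0,\min\{\Lambda_1,\Lambda_2\}]$ the right-hand side stays above a strictly positive constant depending only on $D_2$, $K_1$, $K_2$.

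For the attainment, pick a minimizing sequence $\{v_n\}\subset\mathcal{N}_\lambda^-$ with $J_\lambda(v_n)\searrow\Theta_\lambda^-$. Since $J_\lambda$, $\psi_\cdot'(1)$, and $\psi_\cdot''(1)$ depend only on $|u|$ and $|\nabla u|$, we may replace $v_n$ by $|v_n|\in\V$ and assume $v_n\geq 0$ a.e. Lemma~\ref{ksdp-lem1} gives boundedness in $\V$, hence up to a subsequence $v_n\rightharpoonup v_\lambda$ in $\V$ and $v_n\to v_\lambda$ in $L^s(\Omega)$ for every $s\in[1,p^*)$ by Proposition~\ref{proposition_embeddings}(ii), with $v_\lambda\geq 0$ a.e. The weak limit is nontrivial: writing the Nehari identity for $v_n$, the right-hand side $\lambda\int_\Omega|v_n|^{1-\gamma}\,\diff x+\int_\Omega|v_n|^r\,\diff x$ would tend to zero if $v_\lambda\equiv 0$, while the left-hand side stays bounded below by $b_0(D_2/p)^{\theta-1}D_2>0$ thanks to $\|\nabla v_n\|_p^p>D_2$ from Proposition~\ref{gap-struc}; hence $v_\lambda\not\equiv 0$.

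Since $v_\lambda\neq 0$, Lemma~\ref{ksdp-lem2} produces a unique $t_2:=t_2^{v_\lambda}>0$ with $t_2v_\lambda\in\mathcal{N}_\lambda^-$, and the heart of the argument is to show that the weak convergence $v_n\rightharpoonup v_\lambda$ is actually strong, which will force $t_2=1$, place $v_\lambda$ in $\mathcal{N}_\lambda^-$, and give $J_\lambda(v_\lambda)=\Theta_\lambda^-$. Using the shape of the fibering function from Lemma~\ref{ksdp-lem2} together with $\psi_{v_n}(0)=0$, $\psi_{v_n}(t_1^{v_n})<0$ and $\psi_{v_n}(1)=J_\lambda(v_n)\to\Theta_\lambda^->0$, the point $t=1$ is, for all large $n$, the \emph{global} maximum of $\psi_{v_n}$ on $[0,\infty)$; hence $\psi_{v_n}(1)\geq\psi_{v_n}(t_2)$. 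Passing to the limit, with the strong $L^s$-convergence handling the singular and polynomial integrals and the weak lower semicontinuity of $\phi_\mathcal{H}$ (and hence of $\phi_\mathcal{H}^\theta$) controlling the gradient terms, together with $J_\lambda(t_2v_\lambda)\geq\Theta_\lambda^-$, we obtain the chain
\[
\Theta_\lambda^-=\lim_n\psi_{v_n}(1)\geq\liminf_n\psi_{v_n}(t_2)\geq\psi_{v_\lambda}(t_2)=J_\lambda(t_2v_\lambda)\geq\Theta_\lambda^-,
\]
which collapses to equalities. As in the strong-convergence step at the end of Proposition~\ref{mini:N+}, this forces $\|\nabla v_n\|_p^p\to\|\nabla v_\lambda\|_p^p$ and $\|\nabla v_n\|_{q,a}^q\to\|\nabla v_\lambda\|_{q,a}^q$, so modular convergence and uniform convexity yield $v_n\to v_\lambda$ strongly in $\V$. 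Then $\psi'_{v_\lambda}(1)=0$ and $\psi''_{v_\lambda}(1)\leq 0$, and Lemma~\ref{ksdp-lem3-} excludes equality, giving $v_\lambda\in\mathcal{N}_\lambda^-$ and $J_\lambda(v_\lambda)=\Theta_\lambda^-$. The main technical obstacle is precisely this strong-convergence step, and the positivity $\Theta_\lambda^->0$ from the first part is indispensable there, since it is what promotes the local maximum $t=1$ of $\psi_{v_n}$ to a global one and thereby legitimizes the comparison $\psi_{v_n}(1)\geq\psi_{v_n}(t_2)$ driving the equality chain.
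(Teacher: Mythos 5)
Your proof is correct and follows essentially the same route as the paper: both parts rest on the uniform lower bound $\|\nabla v\|_p^p>D_2$ from Proposition \ref{gap-struc}, the scaling point $t_2^{v_\lambda}$ from Lemma \ref{ksdp-lem2}, the comparison $\psi_{v_n}(1)\geq\psi_{v_n}(t_2^{v_\lambda})$ combined with weak lower semicontinuity to force strong convergence, and the emptiness of $\mathcal{N}_\lambda^\circ$ to conclude $v_\lambda\in\mathcal{N}_\lambda^-$. The only (harmless) deviations are that you prove $\Theta_\lambda^->0$ directly rather than by contradiction, you establish $v_\lambda\not\equiv 0$ via the Nehari identity and $D_2$ instead of the paper's lower bound on $\|v_n\|_r$, and you justify more explicitly than the paper why $t=1$ is the \emph{global} maximum of $\psi_{v_n}$ for large $n$, which indeed requires $\Theta_\lambda^->0$ and not merely $\psi_{v_n}''(1)<0$.
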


\begin{proof}
	The first assertion will be proved by contradiction. Thus, let us suppose there exists $v_0 \in \mathcal{N}_\lambda^-$ such that $J_\lambda(v_0) \leq 0$, that is,
	\begin{align}\label{est:cont:stat}
		M[\phi_\mathcal{H}(\nabla v_0)] \leq \frac{\lambda}{1-\gamma}\int_\Omega |v_0|^{1-\gamma}\,\diff x + \frac{1}{r}\int_\Omega |v_0|^r\,\diff x.
	\end{align}
	Using $p \phi_{\mathcal{H}}(\nabla u) \geq \|\nabla u\|_p^p$ we obtain the following estimate
	\begin{align}\label{est:control}
		\begin{split}
			&M[\phi_\mathcal{H}(\nabla v_0)] - \frac{1}{q \theta} m(\phi_{\mathcal{H}}(\nabla v_0)) (\|\nabla v_0\|_p^p + \|\nabla v_0\|^q_{q,a})\\
			&= a_0\phi_\mathcal{H}(\nabla v_0) + \frac{b_0}{\theta} \phi^\theta_\mathcal{H}(\nabla v_0) - \frac{1}{q \theta} \left(a_0 + b_0 \phi^{\theta-1}_\mathcal{H}(\nabla v_0)\right) (\|\nabla v_0\|_p^p + \|\nabla v_0\|^q_{q,a})\\
			& = a_0 \left(\frac{1}{p} - \frac{1}{q \theta} \right)\|\nabla v_0\|_p^p + a_0 \left(\frac{1}{q} - \frac{1}{q \theta} \right)\|\nabla v_0\|^q_{q,a} + b_0 \phi^{\theta-1}_\mathcal{H}(\nabla v_0) \left(\frac{1}{p \theta} - \frac{1}{q \theta} \right)\|\nabla v_0\|_p^p \\
			& \geq \frac{1}{pq\theta}
			\frac{b_0 (q-p)}{p^{\theta-1}} \|\nabla v_0\|_p^{p\theta} \\
			& = D_3 \|\nabla v_0\|_p^{p\theta}\quad\text{with }D_3=\frac{1}{pq\theta}
			\frac{b_0 (q-p)}{p^{\theta-1}}>0.
		\end{split}
	\end{align}
	Since $v_0 \in \mathcal{N}_\lambda^- \subset \mathcal{N}_\lambda$ we have  
	\begin{align}\label{estimate-2}
		- \frac{1}{q \theta}m(\phi_{\mathcal{H}}(\nabla v_0))\left(\|\nabla v_0\|_p^p+ \|\nabla v_0\|_{q,a}^q\right)
		=- \frac{\lambda}{q \theta} \int_\Omega |v_0|^{1-\gamma}\,\diff x 
		- \frac{1}{q \theta} \int_\Omega |v_0|^r\,\diff x.
	\end{align}
	Now, using \eqref{est:cont:stat}, \eqref{est:control}, \eqref{estimate-2}, $r > q \theta$ along with H\"older's inequality and \eqref{best-sobolev-constant}, we obtain
	\begin{align*}
		D_3 \|\nabla v_0\|_p^{p\theta}
		&\leq M[\phi_\mathcal{H}(\nabla v_0)] - \frac{1}{q \theta} m(\phi_{\mathcal{H}}(\nabla v_0)) (\|\nabla v_0\|_p^p + \|\nabla v_0\|^q_{q,a}) \\
		& \leq \lambda \left(\frac{1}{1-\gamma}-\frac{1}{q\theta}\right) \int_{\Omega} |v_0|^{1-\gamma} \,\diff x + \left(\frac{1}{r}- \frac{1}{q \theta}\right) \int_{\Omega} |v_0|^{r} \,\diff x\\
		& \leq \lambda \left(\frac{q \theta + \gamma-1}{q \theta(1-\gamma)}\right) |\Omega|^{1-\frac{1-\gamma}{p^*}} S^{-\frac{1-\gamma}{p}}\|\nabla v_0\|_p^{1-\gamma}= \lambda D_4 \|\nabla v_0\|_p^{1-\gamma}
	\end{align*}
	with
	\begin{align*}
		D_4=\left(\frac{q \theta + \gamma-1}{q \theta(1-\gamma)}\right) |\Omega|^{1-\frac{1-\gamma}{p^*}} S^{-\frac{1-\gamma}{p}}>0.
	\end{align*}
	Combining the considerations above with Proposition \ref{gap-struc} gives
	\begin{align*}
		0<D_2^{\frac{p\theta-1+\gamma}{p}}\leq \|\nabla v_0\|_p^{p\theta-1+\gamma} \leq \frac{D_4}{D_3}\lambda.
	\end{align*}
	Letting $\lambda\to0$ yields a contradiction. Therefore, we can find $\Lambda_3\in  (0,\min\{\Lambda_1,\Lambda_2\}]$ such that $\Theta_\lambda^- >0$ for all $\lambda \in (0, \Lambda_3)$.

	Let us now prove the second assertion of the proposition. To this end, let $\{v_n\}_{n \in \N}$ be a minimizing sequence in $\mathcal{N}_{\lambda}^-$ such that $J_\lambda(v_n) \to \Theta_\lambda^-$. Since $\mathcal{N}_{\lambda}^- \subset \mathcal{N}_{\lambda}$, Lemma \ref{ksdp-lem1} implies that $\{v_n\}_{n\in\N}$ is a bounded sequence in $\V$. Therefore, by Proposition \ref{proposition_embeddings}\textnormal{(ii)} along with the reflexivity of $\V$, there exist a subsequence still denoted by $\{v_n\}_{n\in\N}$, and $v_\lambda\in\V$ such that
	\begin{align}\label{est:conv}
		v_n\rightharpoonup v_\lambda \quad\text{in }\V,
		\quad v_n\to v_\lambda\quad\text{in }\Lp{s}
		\quad\text{and}\quad v_n\to v_\lambda \quad \text{a.\,e.\,in }\Omega 
	\end{align}
	for any $s\in [1,p^*)$.
From $v_n \in \mathcal{N}_\lambda^-$, $p \phi_{\mathcal{H}}(\nabla u) \geq \|\nabla u\|_p^p$ and H\"older's inequality along with \eqref{best-sobolev-constant} we have
	\begin{align*}
		\frac{b_0 (p-1) S}{p^{\theta-1}|\Omega|^{\left(1- \frac{r}{p^*}\right)\frac{p}{r}}} \|v_n\|_r^p 
		&\leq  \frac{b_0 (p-1)}{p^{\theta-1}} \|\nabla v_n\|_p^p \\
		&\leq  \left[a_0 +b_0\phi_\mathcal{H}^{\theta-1}(\nabla v_n)\right] \left[(p-1)\|\nabla v_n\|_p^p+(q-1) \|\nabla v_n\|_{q,a}^q\right]\\
		& \quad \quad + b_0(\theta -1)\phi_\mathcal{H}^{\theta-2}(\nabla v_n)\left(\|\nabla v_n\|_p^p+\|\nabla u\|_{q,a}^q\right)^2+ \lambda \gamma \int_\Omega |v_n|^{1-\gamma}\,\diff x\\ 
		&\leq (r-1) \int_\Omega |v_n|^r\,\diff x= (r-1) \|v_n\|^r_r.
	\end{align*}
	This implies
	\begin{align*}
		0<\frac{b_0 (p-1) S}{p^{\theta-1}(r-1)|\Omega|^{\left(1- \frac{r}{p^*}\right)\frac{p}{r}}} \leq \|v_n\|_r^{r-p}.
	\end{align*}
	Hence, due to the strong convergence of $v_n\to v_\lambda$ in $\Lp{r}$, see \eqref{est:conv}, we can conclude that $v_\lambda\neq 0$. 
	
	Since $v_\lambda \not\equiv 0$, from Lemma \ref{ksdp-lem2}  we know there exists a unique $t_2^{v_\lambda}>0$ such that $t_2^{v_\lambda}v_\lambda\in \mathcal{N}_\lambda^-$.
	
	Now, we are going to show, up to a subsequence, that $\lim_{n \to \infty} \varrho(\nabla v_n)= \varrho(\nabla v_\lambda)$. For this, we repeat the same arguments as in the proof of Proposition \ref{mini:N+} by establishing
	\begin{align*}
		\liminf_{n\to  \infty} \|\nabla v_n\|_p^p= \|\nabla v_\lambda \|_p^p
		\quad \text{and}\quad 
		\liminf_{n\to  \infty} \|\nabla v_n\|_{q,a}^q= \|\nabla v_\lambda \|_{q,a}^q.
	\end{align*}

	Let us suppose that
	\begin{align*}
		\liminf_{n\to  \infty} \|\nabla v_n\|_p^p> \|\nabla v_\lambda \|_p^p.
	\end{align*}
	Then, using the inequality above, \eqref{est:conv} and the continuity and increasing property of the primitive of the Kirchhoff term $M$, we obtain
	\begin{align*}
		J_\lambda(t_2^{v_\lambda} v_\lambda) < \lim \inf_{n \to \infty} J_\lambda( t_2^{v_\lambda} v_n) \leq \lim_{n \to \infty} J_\lambda(t_2^{v_\lambda} v_n).
	\end{align*}
	Note that $v_n$ is the global maximum since $\psi''_{v_n}(1)<0$. Therefore, we have $J_\lambda(t_2^{v_\lambda} v_n) \leq J_\lambda(v_n)$ and since $t_2^{v_\lambda}v_\lambda \in \mathcal{N}_\lambda^-$, we conclude that
	\begin{align*}
		\Theta_\lambda^- \leq J_\lambda( t_2^{v_\lambda} v_\lambda) <  \lim_{n \to \infty} J_\lambda( v_n) = \Theta_\lambda^-.
	\end{align*}
Thus, we get a contradiction. The other case works similarly and so we know that there is a subsequence (not relabeled) such $\lim_{n \to \infty} \varrho(\nabla v_n)= \varrho(\nabla v_\lambda)$. Using the uniform convexity of the modular function $\rho_\mathcal{H}$ and the continuity of the energy functional $J_\lambda$, we get $v_n \to v_\lambda$ due to Proposition \ref{proposition_modular_properties}\textnormal{(v)} and $J_\lambda(v_n) \to J_\lambda(v_\lambda)= \Theta_\lambda^-$ up to a subsequence. 
	
	In order to show $v_\lambda \in \mathcal{N}_\lambda^-$, we use the fact that $ v_n \in \mathcal{N}_\lambda^-$ for every $n \in \mathbb{N}$ and so by passing to the limit in $\psi''_{v_n}(1) <0$, we obtain
	\begin{align*}
		& \left[a_0 +b_0\phi_\mathcal{H}^{\theta-1}(\nabla v_\lambda)\right] \left[(p-1)\|\nabla v_\lambda\|_p^p+(q-1) \|\nabla v_\lambda\|_{q,a}^q\right]\\ \nonumber
		& \quad + b_0(\theta -1)\phi_\mathcal{H}^{\theta-2}(\nabla v_\lambda)\left(\|\nabla v_\lambda\|_p^p+ \|\nabla v_\lambda\|_{q,a}^q\right)^2 + \lambda \gamma \int_\Omega |v_\lambda|^{1-\gamma}\,\diff x -(r-1)\int_\Omega |v_\lambda|^r\,\diff x \leq 0.
	\end{align*}
	Since $v_\lambda \not\equiv 0$ and using Lemma \ref{ksdp-lem3} with $\lambda \in (0,\min\{\Lambda_1,\Lambda_2,\Lambda_3\} )$, we infer that the equality cannot occur, so we have a strict inequality which gives $v_\lambda \in \mathcal{N}_\lambda^-$. As before, since we can work with $|v_\lambda|$ instead of $v_\lambda$, we can assume that $v_\lambda \geq 0$ a.\,e.\,in $\Omega$.
\end{proof}

Now we show that $v_\lambda$ obtained in Proposition \ref{pro:prelim:sec-sol} is indeed a weak solution of our problem \eqref{problem}.

\begin{proposition}\label{existence:second}
	Let hypotheses \textnormal{(H)} be satisfied and let $\lambda \in (0,\min\{\Lambda_1,\Lambda_2,\Lambda_3\} )$, with $\Lambda_1$, $\Lambda_2$, $\Lambda_3$ given in Lemmas \ref{ksdp-lem3-} and \ref{ksdp-lem2} as well as Proposition \ref{pro:prelim:sec-sol}. Then $v_\lambda$ is a weak solution of  problem \eqref{problem} with $J_\lambda(v_\lambda) >0$.
\end{proposition}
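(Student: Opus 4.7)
The plan is to mirror the three-step structure used in the proof of Proposition~\ref{existence:first}: first establish strict positivity $v_\lambda > 0$ a.\,e.\,in $\Omega$, then derive a one-sided variational inequality against nonnegative test functions, and finally upgrade to the weak formulation through the $(v_\lambda + \varepsilon \varphi)^+$ trick. The sign $J_\lambda(v_\lambda) > 0$ comes for free from Proposition~\ref{pro:prelim:sec-sol}, which already gives $J_\lambda(v_\lambda) = \Theta_\lambda^- > 0$. The essential novelty with respect to Proposition~\ref{existence:first} is that $v_\lambda \in \mathcal{N}_\lambda^-$ places $t = 1$ at a local \emph{maximum} of the fiber $\psi_{v_\lambda}$, so Proposition~\ref{ksdp-prop4} does not apply; a projected comparison along the Nehari manifold will replace it.

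For the setup I would fix $h \in \V$ with $h \geq 0$ and apply Lemma~\ref{ksdp-lem3} at $v_\lambda \in \mathcal{N}_\lambda^-$ to get a continuous $\zeta$ with $\zeta(0) = 1$ and $\alpha_\epsilon (v_\lambda + \epsilon h) \in \mathcal{N}_\lambda^-$ for $\alpha_\epsilon := \zeta(\epsilon h)$ and all small $\epsilon > 0$. Setting $w_\epsilon := \alpha_\epsilon (v_\lambda + \epsilon h)$, the minimality of $v_\lambda$ over $\mathcal{N}_\lambda^-$ yields the master inequality $J_\lambda(w_\epsilon) - J_\lambda(v_\lambda) \geq 0$. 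For positivity I would argue by contradiction: if $v_\lambda \equiv 0$ on a set $K$ of positive measure, I pick $h > 0$ in $\Omega$; on $K$ one has $w_\epsilon = \alpha_\epsilon \epsilon h$, so the singular contribution to $J_\lambda(w_\epsilon) - J_\lambda(v_\lambda)$ includes the term $-\frac{\lambda \alpha_\epsilon^{1-\gamma} \epsilon^{1-\gamma}}{1-\gamma} \int_K h^{1-\gamma}\,\diff x$ of order $\epsilon^{1-\gamma}$, while all other contributions are $O(\epsilon)$. Dividing by $\epsilon^{1-\gamma}$ and sending $\epsilon \to 0^+$ produces a strictly negative limit, contradicting the master inequality.

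For the variational inequality I would divide the master inequality by $\epsilon > 0$ and take $\liminf$. The $M$-term and the $r$-term are smooth in $(\alpha, \epsilon)$ near $(1, 0)$ and their difference quotients converge modulo the scaling derivative $\alpha_0' := \lim_{\epsilon \to 0^+}(\alpha_\epsilon - 1)/\epsilon$, whose existence and finiteness follow from a refined application of the implicit function theorem as in Lemma~\ref{ksdp-lem3}, using $v_\lambda > 0$ a.\,e.\,(from Step 1) to tame the singular integral appearing in $F(v, t) = t^\gamma \psi_{v_\lambda + v}'(t)$. The singular part of $J_\lambda$ is handled by Fatou's lemma applied to $\zeta_\epsilon := \epsilon^{-1}[(v_\lambda + \epsilon h)^{1-\gamma} - v_\lambda^{1-\gamma}] \to (1-\gamma) v_\lambda^{-\gamma} h$, exactly as in Step~2 of Proposition~\ref{existence:first}. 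Collecting terms, the coefficient of $\alpha_0'$ in the limit equals
\[
m(\phi_\mathcal{H}(\nabla v_\lambda))\bigl(\|\nabla v_\lambda\|_p^p + \|\nabla v_\lambda\|_{q,a}^q\bigr) - \lambda \int_\Omega v_\lambda^{1-\gamma}\,\diff x - \int_\Omega v_\lambda^r\,\diff x,
\]
which vanishes identically because $v_\lambda \in \mathcal{N}_\lambda$. Hence $\alpha_0'$ drops out and one obtains
\[
m(\phi_\mathcal{H}(\nabla v_\lambda))\,\langle \mathcal{L}_{p,q}^a(v_\lambda), h \rangle \geq \lambda \int_\Omega v_\lambda^{-\gamma} h\,\diff x + \int_\Omega v_\lambda^{r-1} h\,\diff x,
\]
together with $v_\lambda^{-\gamma} h \in L^1(\Omega)$; the splitting $\varphi = \varphi^+ - \varphi^-$ promotes integrability to every $\varphi \in \V$.

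The last step is a verbatim transcription of Step~3 in the proof of Proposition~\ref{existence:first}: insert $(v_\lambda + \epsilon \varphi)^+$ into the inequality above, split the integrals over $\{v_\lambda + \epsilon \varphi > 0\}$ and $\{v_\lambda + \epsilon \varphi \leq 0\}$ (the latter having vanishing measure as $\epsilon \to 0^+$ by Step 1), divide by $\epsilon$, and let $\epsilon \to 0^+$ to obtain the inequality for arbitrary $\varphi \in \V$; replacing $\varphi$ by $-\varphi$ upgrades it to the desired equality. The hard part will be Step 2: the projection scalar $\alpha_\epsilon$ couples with the perturbation direction $h$ in a way absent from Proposition~\ref{existence:first}, and the whole argument rests on the cancellation of the would-be Lagrange multiplier $\alpha_0'$ through the Nehari identity $\psi_{v_\lambda}'(1) = 0$—a feature one should expect in any Nehari-manifold argument that minimizes over $\mathcal{N}_\lambda^-$.
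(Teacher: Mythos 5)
Your overall architecture (positivity, one-sided inequality against $h\geq 0$, then the $(v_\lambda+\varepsilon\varphi)^+$ upgrade) matches the paper, and your observation that the sign $J_\lambda(v_\lambda)=\Theta_\lambda^->0$ is already supplied by Proposition \ref{pro:prelim:sec-sol} is correct. However, there is a genuine gap at the point you yourself flag as ``the hard part'': you make the argument rest on the existence and finiteness of $\alpha_0'=\lim_{\epsilon\to 0^+}(\alpha_\epsilon-1)/\epsilon$, claiming it follows from ``a refined application of the implicit function theorem''. It does not. The map $F(v,t)=t^{\gamma}\psi_{v_\lambda+v}'(t)$ contains the term $-\lambda\int_\Omega|v_\lambda+v|^{1-\gamma}\,\diff x$, which is merely continuous in $v$; its G\^ateaux derivative in a direction $h$ would be $(1-\gamma)\int_\Omega|v_\lambda+v|^{-\gamma}\operatorname{sgn}(v_\lambda+v)h\,\diff x$, whose very integrability is exactly what Step 2 is trying to establish. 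So the implicit function theorem only yields continuity of $\zeta$ (which is all Lemma \ref{ksdp-lem3} asserts), not differentiability, and without a rate on $\alpha_\epsilon-1$ your claims that ``all other contributions are $O(\epsilon)$'' in Step 1 and that the difference quotients ``converge modulo the scaling derivative $\alpha_0'$'' in Step 2 are unsupported: the $M$-term difference $M[\phi_\mathcal{H}(\alpha_\epsilon\nabla(v_\lambda+\epsilon h))]-M[\phi_\mathcal{H}(\nabla v_\lambda)]$ is only of order $|\alpha_\epsilon-1|+\epsilon$. The cancellation of the coefficient of $\alpha_0'$ via $\psi_{v_\lambda}'(1)=0$ is a correct algebraic remark, but it does not rescue the argument if $\alpha_0'$ fails to exist or $(\alpha_\epsilon-1)/\epsilon$ is unbounded.

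The paper's proof removes this obstruction with a short device you are missing: since $v_\lambda\in\mathcal{N}_\lambda^-$, the point $t=1$ is a local maximum of the fiber map $\psi_{v_\lambda}$, and $\zeta(th)\to 1$, so for small $t$ one has $J_\lambda(v_\lambda)=\psi_{v_\lambda}(1)\geq\psi_{v_\lambda}(\zeta(th))=J_\lambda(\zeta(th)v_\lambda)$. Hence
\begin{align*}
0\leq\frac{J_\lambda(\zeta(th)(v_\lambda+th))-J_\lambda(v_\lambda)}{t}\leq\frac{J_\lambda(\zeta(th)(v_\lambda+th))-J_\lambda(\zeta(th)v_\lambda)}{t},
\end{align*}
and in the right-hand quotient both arguments carry the \emph{same} scaling factor $\zeta(th)$, so only the value of $\zeta$ (bounded, converging to $1$) enters, never its derivative. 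This yields the $t^{-\gamma}$ blow-up on $\widehat K$ for Step 1 and the Fatou/difference-quotient limit for Step 2 without any expansion in $\alpha_\epsilon-1$. You should replace your Taylor-expansion-in-$\alpha_\epsilon$ argument by this comparison; the rest of your proposal (Fatou's lemma for the singular term, the splitting $\varphi=\varphi^+-\varphi^-$, and the verbatim Step 3) then goes through as in Proposition \ref{existence:first}.
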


\begin{proof}
	The proof works similar to the one of Proposition \ref{existence:first}. Let $h \in\V$ with $h>0.$ As we already know, $v_\lambda \in \mathcal{N}_\lambda^-$ and $v_\lambda \geq 0$ a.\,e.\,in $\Omega$. Then from Lemma \ref{ksdp-lem3} and Proposition \ref{pro:prelim:sec-sol} there exist $\varepsilon>0$ and a continuous map $\zeta\colon B_\varepsilon(0) \to (0,\infty)$ such that $\zeta(th) \to 1$ as $t \to 0^+$ and
	\begin{align}\label{est:secsol-1}
		\begin{split}
			\zeta(th)(u+th) &\in \mathcal{N}_\lambda^- 
			\quad \text{and} \quad 
			\Theta_\lambda^- = J_\lambda(v_\lambda) \leq J_\lambda(\zeta(th)(v_\lambda +th)) 
			\quad \text{for all } t \text{ such that }  th \in B_\varepsilon(0).
		\end{split}
	\end{align}
	Now, we claim that $v_\lambda >0$ a.\,e.\,in $\Omega$. We proceed by contradiction. Suppose there exists a set $\widehat{K} \subset \Omega$ of positive measure such that $v_\lambda=0$ in $\widehat{K}$. Since $\zeta(th) \to 1$ as $t \to 0^+$, by continuity of the map $\psi_{v_\lambda}$ in the neighborhood of $1$ and $\psi_{v_\lambda}(1)$ being the global maximum, we can choose $\delta>0$ small enough such that $t \in [0, \delta]$ and $\psi_{v_\lambda}(1) \geq \psi_{v_\lambda}(\zeta(th)).$ Then, using \eqref{est:secsol-1}, one has
	\begin{align*}
		0 
		&\leq \frac{J_\lambda(\zeta(th)(v_\lambda +th)) - J_\lambda(v_\lambda)}{t}\\ 
		& \leq \frac{J_\lambda(\zeta(th)(v_\lambda +th)) - J_\lambda(\zeta(th) v_\lambda)}{t}\\
		& = \frac{M[\phi_\mathcal{H}(\nabla (\zeta(th)(v_\lambda +th)))] - M[\phi_\mathcal{H}(\nabla (\zeta(th) v_\lambda))]}{t} \\
		& \quad -\frac{\lambda}{1-\gamma}\int_\Omega \frac{(\zeta(th)(v_\lambda +th))^{1-\gamma}- (\zeta(th) v_\lambda )^{1-\gamma}}{t}\,\diff x \\
		& \quad - \frac{1}{r}\int_\Omega \frac{(\zeta(th)(v_\lambda +th))^r-  (\zeta(th) v_\lambda )^r}{t}\,\diff x\\
		& < \frac{M[\phi_\mathcal{H}(\nabla (\zeta(th)(v_\lambda +th)))] - M[\phi_\mathcal{H}(\nabla (\zeta(th) v_\lambda))]}{t} 
		-\frac{\lambda (\zeta(th))^{1-\gamma}}{1-\gamma}\int_{\widehat{K}} \frac{(th)^{1-\gamma}}{t}\,\diff x \\
		& \quad - \frac{1}{r}\int_\Omega \frac{(\zeta(th)(v_\lambda +th))^r-  (\zeta(th) v_\lambda )^r}{t}\,\diff x \to -\infty\quad \text{as }t\to 0^+,
	\end{align*}
	which is a contradiction and hence $v_\lambda>0$ a.\,e.\,in $\Omega$. The rest of the proof can be done by following the same arguments as in the proof of Proposition \ref{existence:first}, using $\psi_{v_\lambda}(1) \geq \psi_{v_\lambda}(\zeta(th))$ along with $v_\lambda >0$ and \eqref{est:secsol-1}.
\end{proof}

\begin{proof}[Proof of Theorem \ref{main_result}]
	Choosing $\lambda_*\in (0,\min\{\Lambda_1,\Lambda_2,\Lambda_3\} )$, the assertions of the theorem follow now from Propositions \ref{existence:first} and \ref{existence:second}.
\end{proof}

\section{A second Kirchhoff double phase problem}\label{sec final}

Inspired by \eqref{problem3} studied in \cite{Fiscella-Pinamonti-2020}, in this section we deal with a Kirchhoff problem of double phase type with $p$ and $q$ elliptic terms separated. Namely, we consider

\begin{equation}\label{problem_second}\tag{$\widetilde{P}_\lambda$}
	\begin{aligned}
		-m \left(\|\nabla u\|_p^p\right)\Delta_pu-m\left(\|\nabla u\|_{q,a}^q\right) \div\left(a(x)|\nabla u|^{q-2}\nabla u\right) &= \lambda u^{-\gamma} +u^{r-1} \quad&& \text{in } \Omega,\\
		u  &> 0 \quad && \text{in } \Omega,\\
		u  &= 0 &&\text{on } \partial\Omega,
	\end{aligned}
\end{equation}
satisfying the same structural assumption of \eqref{problem}. Problem \eqref{problem_second} has still a variational setting, where the corresponding energy functional $\widetilde{J}_\lambda\colon \V \to \R$ associated to problem \eqref{problem_second} is given by
\begin{align*}
	\widetilde{J}_\lambda (u) &= \frac{1}{p}M\left(\|\nabla u\|_p^p\right)+\frac{1}{q}M\left(\|\nabla u\|_{q,a}^q\right) -\frac{\lambda}{1-\gamma}\int_\Omega |u|^{1-\gamma}\,\diff x - \frac{1}{r}\int_\Omega |u|^r\,\diff x\\
	&=a_0\phi_{\mathcal H}(\nabla u)+\frac{b_0}{\theta}\left(\frac{\|\nabla u\|_p^{p\theta}}{p}+\frac{\|\nabla u\|_{q,a}^{q\theta}}{q}\right) -\frac{\lambda}{1-\gamma}\int_\Omega |u|^{1-\gamma}\,\diff x - \frac{1}{r}\int_\Omega |u|^r\,\diff x.
\end{align*}
Of course the critical points of $\widetilde{J}_\lambda$ coincides with the weak solutions of \eqref{problem_second}, verifying the following complete definition.

\begin{definition}
	A function $u \in \V$ is said to be a weak solution of problem \eqref{problem_second} if $u^{-\gamma}\varphi\in L^1(\Omega)$, $u>0$ a.e.\,in $\Omega$ and
	\begin{align*}
		&m\left(\|\nabla u\|_p^p\right)\int_{\Omega}|\nabla u|^{p-2}\nabla u\cdot\nabla\varphi\,\diff x
		+m\left(\|\nabla u\|_{q,a}^q\right)\int_{\Omega}a(x)|\nabla u|^{q-2}\nabla u\cdot\nabla\varphi\,\diff x\\
		&= \lambda \int_{\Omega} u^{-\gamma}\varphi\,\diff x 
		+\int_\Omega u^{r-1} \varphi\,\diff x
	\end{align*}
	is satisfied for all $\varphi \in \V$.
\end{definition}

Then, arguing similarly to Theorem \ref{main_result}, we are able to provide the following result.

\begin{theorem}\label{main_result2}
	Let hypotheses \textnormal{(H)}  be satisfied. Then there exists $\widetilde{\lambda}>0$ such that for all $\lambda \in (0,\widetilde{\lambda}]$ problem \eqref{problem_second} has at least two weak solutions $w_\lambda$, $z_\lambda \in \V$ such that $\widetilde{J}_\lambda(w_\lambda)<0<\widetilde{J}_\lambda(z_\lambda)$.
\end{theorem}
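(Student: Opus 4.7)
The overall plan is to repeat the fibering/Nehari manifold scheme developed in Sections \ref{sec_3} and \ref{sec_4} for problem \eqref{problem}, adapted to the energy functional $\widetilde{J}_\lambda$. For $u\in\V\setminus\{0\}$ I would introduce the fibering map $\widetilde{\psi}_u(t)=\widetilde{J}_\lambda(tu)$, which in the present setting has the particularly convenient decoupled form
\begin{align*}
\widetilde{\psi}_u(t)
&=\frac{a_0 t^p}{p}\|\nabla u\|_p^p+\frac{a_0 t^q}{q}\|\nabla u\|_{q,a}^q
+\frac{b_0 t^{p\theta}}{p\theta}\|\nabla u\|_p^{p\theta}+\frac{b_0 t^{q\theta}}{q\theta}\|\nabla u\|_{q,a}^{q\theta}\\
&\quad -\frac{\lambda t^{1-\gamma}}{1-\gamma}\int_\Omega|u|^{1-\gamma}\,\diff x-\frac{t^r}{r}\int_\Omega|u|^r\,\diff x.
\end{align*}
Then I would define $\widetilde{\mathcal{N}}_\lambda=\{u\in\V\setminus\{0\}:\widetilde{\psi}_u'(1)=0\}$ and split it into $\widetilde{\mathcal{N}}_\lambda^\pm,\widetilde{\mathcal{N}}_\lambda^\circ$ using the sign of $\widetilde{\psi}_u''(1)$, exactly as in Section \ref{sec_3}.

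The crucial observation is that the derivatives $\widetilde{\psi}_u'(t)$ and $\widetilde{\psi}_u''(t)$ contain only pure monomials in $t$ weighted by $\|\nabla u\|_p^p$, $\|\nabla u\|_{q,a}^q$, $\|\nabla u\|_p^{p\theta}$ and $\|\nabla u\|_{q,a}^{q\theta}$; no cross terms of the form $\phi_{\mathcal H}^{\theta-2}(\nabla u)\bigl(\|\nabla u\|_p^p+\|\nabla u\|_{q,a}^q\bigr)^2$ appear. This actually \emph{simplifies} the computations of Lemmas \ref{ksdp-lem3-}--\ref{ksdp-lem2} and Proposition \ref{gap-struc}: every estimate there is obtained by bounding cross products by $q\phi_{\mathcal H}^{\theta-1}(\nabla u)(\|\nabla u\|_p^p+\|\nabla u\|_{q,a}^q)$, and in the new functional those bounds are already built in. The coercivity of $\widetilde{J}_\lambda|_{\widetilde{\mathcal N}_\lambda}$ (the analogue of Lemma \ref{ksdp-lem1}) follows from the Nehari identity after multiplying by $1/r$ and using $p\theta>1-\gamma$ together with the inequality $b_0\bigl(\|\nabla u\|_p^{p\theta}+\|\nabla u\|_{q,a}^{q\theta}\bigr)\ge C\|u\|^{p\theta}$ for $\|u\|$ large, which replaces the $\phi_{\mathcal H}^{\theta-1}(\nabla u)\varrho_{\mathcal H}(\nabla u)\ge q^{1-\theta}\|u\|^{p\theta}$ bound used before.

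Next, I would show $\widetilde{\mathcal{N}}_\lambda^\circ=\emptyset$ and the existence of unique fibering parameters $t_1^u<t_2^u$ with $t_1^u u\in\widetilde{\mathcal N}_\lambda^+$, $t_2^u u\in\widetilde{\mathcal N}_\lambda^-$, for all sufficiently small $\lambda$. The strategy is the same: combine $\widetilde{\psi}_u'(1)=\widetilde{\psi}_u''(1)=0$ linearly to eliminate either $\int|u|^{1-\gamma}$ or $\int|u|^r$, then bound the resulting quantity using $p\phi_{\mathcal H}(\nabla u)\ge\|\nabla u\|_p^p$, Hölder's inequality and the Sobolev constant $S$, producing an upper bound $\|\nabla u\|_p\le C\lambda^{1/(p\theta-1+\gamma)}$ that, once inserted into a positivity estimate, yields a contradiction for $\lambda<\widetilde{\Lambda}_1$. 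The analysis of $\widetilde{\sigma}_u(t):=t^\gamma\widetilde{\psi}_u'(t)+\lambda\int|u|^{1-\gamma}$ follows verbatim, since its strictly decreasing rescaled derivative has exactly the same monomial structure as $T_u(t)$ in Lemma \ref{ksdp-lem2}, and the existence of a unique global maximum $t_{\max}^u$ together with the lower bound $\widetilde{\sigma}_u(t_{\max}^u)\ge\widetilde{\Lambda}_2\int|u|^{1-\gamma}$ follows from the same chain of inequalities. The gap estimates of Proposition \ref{gap-struc} transfer with only minor constant adjustments, producing $\|\nabla u\|_p^p+\|\nabla u\|_{q,a}^q<\widetilde{D}_1(\lambda)$ on $\widetilde{\mathcal N}_\lambda^+$ and $\|\nabla v\|_p^p>\widetilde{D}_2$ on $\widetilde{\mathcal N}_\lambda^-$.

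Having these ingredients, the two existence propositions reproduce Propositions \ref{existence:first} and \ref{existence:second}. I would minimize $\widetilde{J}_\lambda$ first on $\widetilde{\mathcal N}_\lambda^+$ (where the infimum is negative by a direct algebraic computation analogous to the one involving the quantities $B_1,B_2$, using $p\theta,q\theta<r$ and $\gamma<1$), then on $\widetilde{\mathcal N}_\lambda^-$ (where a contradiction argument based on Proposition \ref{gap-struc} shows positivity of the infimum for $\lambda<\widetilde{\Lambda}_3$). The implicit function argument of Lemma \ref{ksdp-lem3} and the variational inequality derivation via $h=(u+\varepsilon\varphi)^+$ in Step 3 of Proposition \ref{existence:first} are identical. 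The main technical hurdle, as before, is passing from the weak convergence of a minimizing sequence to the strong modular convergence needed to identify the limit as a minimizer on $\widetilde{\mathcal N}_\lambda^\pm$; here one must separately prove $\|\nabla u_n\|_p\to\|\nabla u_\lambda\|_p$ and $\|\nabla u_n\|_{q,a}\to\|\nabla u_\lambda\|_{q,a}$ by contradiction against the fibering characterization of $\Theta_\lambda^\pm$, then invoke Proposition \ref{proposition_modular_properties}(v) via uniform convexity of $\varrho_{\mathcal H}$. Finally, choosing $\widetilde{\lambda}\in(0,\min\{\widetilde{\Lambda}_1,\widetilde{\Lambda}_2,\widetilde{\Lambda}_3\})$ yields the two weak solutions $w_\lambda\in\widetilde{\mathcal N}_\lambda^+$ and $z_\lambda\in\widetilde{\mathcal N}_\lambda^-$ with $\widetilde{J}_\lambda(w_\lambda)<0<\widetilde{J}_\lambda(z_\lambda)$, proving Theorem \ref{main_result2}.
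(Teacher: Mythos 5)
Your proposal is correct and follows essentially the same route as the paper, which itself proves Theorem \ref{main_result2} by observing that the fibering/Nehari scheme of Sections \ref{sec_3} and \ref{sec_4} carries over verbatim to $\widetilde{J}_\lambda$ up to changes of constants; your additional observation that the decoupled form of $\widetilde{\psi}_u$ removes the cross terms $\phi_{\mathcal H}^{\theta-2}(\nabla u)\bigl(\|\nabla u\|_p^p+\|\nabla u\|_{q,a}^q\bigr)^2$ and thereby simplifies the estimates is accurate, as is the replacement bound $\|\nabla u\|_p^{p\theta}+\|\nabla u\|_{q,a}^{q\theta}\geq 2^{1-\theta}\varrho_{\mathcal H}^{\theta}(\nabla u)$ used for coercivity.
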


The proof of Theorem \ref{main_result2} works exactly as the one for Theorem \ref{main_result}, up to slight changes of constants.
For this, we omit the detailed proof and we just introduce the fibering function $\widetilde{\psi}_u\colon [0,\infty) \to \R$ defined for $u \in \V \setminus \{0\}$ by 
\begin{align*}
	\widetilde{\psi}_u(t)= \widetilde{J}_\lambda(tu)\quad \text{for all }t\geq 0,
\end{align*}
that is
\begin{align*}
	\widetilde{\psi}_u(t)
	= a_0\phi_\mathcal{H}(t\nabla u) 
	+\frac{b_0}{\theta}\left(\frac{t^{p\theta}\|\nabla u\|_p^{p\theta}}{p}+\frac{t^{q\theta}\|\nabla u\|_{q,a}^{q\theta}}{q}\right)
	- \lambda \frac{t^{1-\gamma}}{1-\gamma}\int_\Omega |u|^{1-\gamma}\,\diff x
	- \frac{t^r}{r}\int_\Omega |u|^r\,\diff x.
\end{align*}
In this case, we still have $\widetilde{\psi}_u \in C^{\infty}((0,\infty))$ satisfying for $t>0$
\begin{align*}
	\widetilde{\psi}_u'(t) 
	&= a_0\left(t^{p-1}\|\nabla u\|_p^p+t^{q-1} \|\nabla u\|_{q,a}^q\right)
	+b_0\left(t^{p\theta-1}\|\nabla u\|_p^{p\theta}+t^{q\theta-1}\|\nabla u\|_{q,a}^{q\theta}\right)\\
	&\quad- \lambda t^{-\gamma}\int_\Omega |u|^{1-\gamma}\,\diff x 
	- t^{r-1}\int_\Omega |u|^r\,\diff x
\end{align*}
while
\begin{align*}
	\widetilde{\psi}_u''(t)
	& =   a_0\left[(p-1)t^{p-2}\|\nabla u\|_p^p+(q-1)t^{q-2} \|\nabla u\|_{q,a}^q\right]\nonumber\\
	& \quad + b_0\left[(p\theta -1)t^{p\theta-2}\|\nabla u\|_p^{p\theta}+(q\theta -1)t^{q\theta-1} \|\nabla u\|_{q,a}^{q\theta}\right]\\
	&\quad  + \lambda \gamma t^{-\gamma-1}\int_\Omega |u|^{1-\gamma}\,\diff x 
	-(r-1) t^{r-2}\int_\Omega |u|^r\,\diff x.\nonumber
\end{align*}
From this, we can still set the Nehari manifold $\mathcal N_\lambda$ and the related submanifolds as done in Section \ref{sec_3}. Then, we construct the two solutions $w_\lambda$ and $z_\lambda$ of \eqref{problem_second} as minimizers of 
$$
\widetilde{\Theta}_\lambda ^- = \inf_{u\in\mathcal{N}_\lambda^-}\widetilde{J}_\lambda(u),
\qquad\widetilde{\Theta}_\lambda ^+ = \inf_{u\in\mathcal{N}_\lambda^+}\widetilde{J}_\lambda(u)
$$
completing the proof of Theorem \ref{main_result2}.

\section*{Acknowledgments}
A.\,Fiscella is member of the {Gruppo Nazionale per l'Analisi Ma\-tema\-tica, la Probabilit\`a e
le loro Applicazioni} (GNAMPA) of the {Istituto Nazionale di Alta Matematica ``G. Severi"} (INdAM).
A.\,Fiscella realized the manuscript within the auspices of the INdAM-GNAMPA project titled "Equazioni alle derivate parziali: problemi e modelli" (Prot\_20191219-143223-545) and of the FAPESP Thematic Project titled "Systems and partial differential equations" (2019/02512-5). R.\,Arora acknowledges the support of the Research Grant from Czech Science Foundation, project GJ19-14413Y.

\end{document}